\DeclareMathOperator{\sgn}{\mathrm{sgn}}
\DeclareMathOperator{\Vol}{\mathrm{Vol}}               
\begin{document}
 \bibliographystyle{plain}

 \newtheorem{theorem}{Theorem}[section]
 \newtheorem{lemma}[theorem]{Lemma}
 \newtheorem{proposition}[theorem]{Proposition}
 \newtheorem{corollary}[theorem]{Corollary}
 \theoremstyle{definition}
 \newtheorem{definition}[theorem]{Definition}
 \newtheorem{example}[theorem]{Example}
 \newcommand{\mc}{\mathcal}
 \newcommand{\A}{\mc{A}}
 \newcommand{\B}{\mc{B}}
 \newcommand{\cc}{\mc{C}}
 \newcommand{\D}{\mc{D}}
 \newcommand{\E}{\mc{E}}
 \newcommand{\F}{\mc{F}}
 \newcommand{\G}{\mc{G}}
 \newcommand{\sH}{\mc{H}}
 \newcommand{\I}{\mc{I}}
 \newcommand{\J}{\mc{J}}
 \newcommand{\K}{\mc{K}}
 \newcommand{\lL}{\mc{L}}
 \newcommand{\M}{\mc{M}}
 \newcommand{\nn}{\mc{N}}
 \newcommand{\rr}{\mc{R}}
 \newcommand{\sS}{\mc{S}}
 \newcommand{\U}{\mc{U}}
 \newcommand{\X}{\mc{X}}
 \newcommand{\Y}{\mc{Y}}
 \newcommand{\zz}{\mc{Z}}
 \newcommand{\C}{\mathbb{C}}
 \newcommand{\R}{\mathbb{R}}
 \newcommand{\N}{\mathbb{N}}
 \newcommand{\Q}{\mathbb{Q}}
 \newcommand{\T}{\mathbb{T}}
 \newcommand{\Z}{\mathbb{Z}}
 \newcommand{\csch}{\mathrm{csch}}
 \newcommand{\tF}{\widehat{F}}
 \newcommand{\tG}{\widehat{G}}
 \newcommand{\tH}{\widehat{H}}
 \newcommand{\tf}{\widehat{f}}
 \newcommand{\ug}{\widehat{g}}
 \newcommand{\wg}{\widetilde{g}}
 \newcommand{\uh}{\widehat{h}}
 \newcommand{\wh}{\widetilde{h}}
 \newcommand{\tJ}{\widehat{J}}
 \newcommand{\tk}{\widehat{k}}
 \newcommand{\tK}{\widehat{K}}
 \newcommand{\tl}{\widehat{l}}
 \newcommand{\tL}{\widehat{L}}
 \newcommand{\tm}{\widehat{m}}
 \newcommand{\tM}{\widehat{M}}
 \newcommand{\tp}{\widehat{\varphi}}
 \newcommand{\tq}{\widehat{q}}
 \newcommand{\ts}{\widehat{s}}
 \newcommand{\tS}{\widehat{S}}
 \newcommand{\tsigma}{\widehat{\sigma}}
 \newcommand{\ttau}{\widehat{\tau}}
 \newcommand{\ttt}{\widehat{t}}
 \newcommand{\tT}{\widehat{T}}
 \newcommand{\tU}{\widehat{U}}
 \newcommand{\tu}{\widehat{u}}
 \newcommand{\tV}{\widehat{V}}
 \newcommand{\tv}{\widehat{v}}
 \newcommand{\tW}{\widehat{W}}
 \newcommand{\ba}{\boldsymbol{a}}
 \newcommand{\bb}{\boldsymbol{b}}
 \newcommand{\bal}{\boldsymbol{\alpha}}
 \newcommand{\bbeta}{\boldsymbol{\beta}}
 \newcommand{\bdelta}{\boldsymbol{\delta}}
 \newcommand{\bpsi}{\boldsymbol{\psi}}
 \newcommand{\bd}{\boldsymbol{d}}
 \newcommand{\be}{\boldsymbol{e}}
 \newcommand{\bl}{\boldsymbol{l}}
 \newcommand{\bk}{\boldsymbol{k}}
 \newcommand{\bell}{\boldsymbol{\ell}}
 \newcommand{\bL}{\boldsymbol{L}}
 \newcommand{\bm}{\boldsymbol{m}}
 \newcommand{\bn}{\boldsymbol{n}}
 \newcommand{\bu}{\boldsymbol{u}}
 \newcommand{\bv}{\boldsymbol{v}}
 \newcommand{\bw}{\boldsymbol{w}}
 \newcommand{\bx}{\boldsymbol{x}}
 \newcommand{\bxi}{\boldsymbol{\xi}}
 \newcommand{\ceta}{\boldsymbol{\eta}}
 \newcommand{\blambda}{\boldsymbol{\lambda}}
 \newcommand{\bwy}{\boldsymbol{y}}
 \newcommand{\bzero}{\boldsymbol{0}}
 \newcommand{\bone}{\boldsymbol{1}}
 \newcommand{\ep}{\varepsilon}
 \newcommand{\p}{\varphi}
 \newcommand{\f}{\frac52}
 \newcommand{\g}{\frac32}
 \newcommand{\h}{\frac12}
 \newcommand{\hh}{\tfrac12}
 \newcommand{\ds}{\text{\rm d}s}
 \newcommand{\dt}{\text{\rm d}t}
 \newcommand{\du}{\text{\rm d}u}
 \newcommand{\dv}{\text{\rm d}v}
 \newcommand{\dbw}{\text{\rm d}\bw}
 \newcommand{\dx}{\text{\rm d}x}
 \newcommand{\dbx}{\text{\rm d}\bx}
 \newcommand{\dy}{\text{\rm d}y}
 \newcommand{\dbwy}{\text{\rm d}\bwy}
 \newcommand{\dl}{\text{\rm d}\lambda}
 \newcommand{\dmu}{\text{\rm d}\mu}
 \newcommand{\dnu}{\text{\rm d}\nu(\lambda)}
 \newcommand{\dnus}{\text{\rm d}\nu_{\sigma}(\lambda)}
 \newcommand{\dlnu}{\text{\rm d}\nu_l(\lambda)}
 \newcommand{\dnnu}{\text{\rm d}\nu_n(\lambda)}
 \newcommand{\sech}{\text{\rm sech}}
 \def\today{\number\time, \ifcase\month\or
  January\or February\or March\or April\or May\or June\or
  July\or August\or September\or October\or November\or December\fi
  \space\number\day, \number\year}

\title[Fractional parts]{Sums of products of fractional parts}
\author[L\^e and Vaaler]{Th\'ai~Ho\`ang~L\^e and Jeffrey~D.~Vaaler}
\date{\today}
\keywords{linear forms, fractional part, Littlewood conjecture}
\thanks{The research of the second author was supported by NSA grant, H98230-12-1-0254.}

\address{Department of Mathematics, Univerisity of Texas, Austin, Texas 78712-1082 USA}
\email{leth@math.utexas.edu}

\address{Department of Mathematics, University of Texas, Austin, Texas 78712-1082 USA}
\email{vaaler@math.utexas.edu}

\begin{abstract} We prove upper and lower bounds for certain sums of products of fractional parts by
using majoring and minorizing functions from Fourier analysis.  In special cases
the upper bounds are sharp if there exist counterexamples to the Littlewood conjecture in Diophantine
approximation.  We introduce a generalization of such counterexamples which we call strongly badly
approximable matrices.  We also prove a transference principle for strongly badly approximable matrices.
\end{abstract}

\maketitle

\numberwithin{equation}{section}

\section{Introduction}

In this paper we prove upper and lower bounds for certain sums of products of fractional parts.  As usual we write
\begin{equation}\label{intro1}
\|x\| = \min\{|x - n|: n\in\Z\}
\end{equation}
for the distance from the real number $x$ to the nearest integer.  Then $x\mapsto \|x\|$ is well defined on the quotient
group $\R/\Z$, and $(x, y)\mapsto \|x - y\|$ is a metric on $\R/\Z$ that induces its quotient topology.  

We will work in the following general setting.  Let $M$ and $N$ be positive integers, and then define compact abelian groups
\begin{equation}\label{intro2}
G_1 = (\R/\Z)^{MN},\quad\text{and}\quad G_2 = (\R/\Z)^M.
\end{equation}
We write $\mu_1$ and $\mu_2$, respectively, for Haar measures on the Borel subsets of these groups normalized so
that $\mu_1(G_1) = \mu_2(G_2) = 1$.  We write the elements of the group $G_1$ as $M\times N$ matrices with entries in
$\R/\Z$.  That is, we write
\begin{equation*}\label{intro3}
A = (\alpha_{mn}),\quad\text{where}\quad \alpha_{mn} \in \R/\Z, 
\end{equation*}
for a generic element of $G_1$.  Obviously addition in the group $G_1$ coincides with addition of matrices.
We write the elements of the group $G_2$ as $M \times 1$ column matrices.  If $\bxi$ is an $N \times 1$ column vector in
$\Z^N$ then 
\begin{equation}\label{intro4}
A \mapsto A\bxi = \biggl(\sum_{n=1}^N \alpha_{mn} \xi_n\biggr)
\end{equation}
defines a continuous homomorphism from $G_1$ into $G_2$.  If $\bxi \not= \bzero$ then it follows easily that (\ref{intro4})
is surjective and measure preserving.

For positive integers $L_1, L_2, \dots , L_M$, we define $F_{\bL}: G_2 \rightarrow [1, \infty)$ by
\begin{equation*}\label{intro5}
F_{\bL}(\bx) = \prod_{m=1}^M \min\big\{L_m, (2 \|x_m\|)^{-1}\big\}.
\end{equation*}
And we define $F: G_2 \rightarrow [1, \infty]$ by
\begin{equation}\label{intro6}
F(\bx) = \prod_{m=1}^M (2 \|x_m\|)^{-1}.
\end{equation}
If $A$ is an element of the group $G_1$, if $Y\subseteq \Z^N$ is a finite set of integer lattice points, and $X = Y - Y$ is its difference 
set, we prove lower bounds for the sum
\begin{equation*}\label{intro7}
\sum_{\substack{\bxi \in X\\\bxi \not= \bzero}} F_{\bL}(A\bxi).
\end{equation*}
Our lower bounds depend on the cardinality $|Y|$, and on the integers $L_1, L_2, \dots , L_M$, but {\it not} on the point $A$ in $G_1$.
Here is the precise inequality.

\begin{theorem}\label{thmintro1}  Let $L_1, L_2, \dots , L_M$ be positive integers, $Y \subseteq \Z^N$ a finite, nonempty subset of integer lattice
points with difference set $X = Y - Y$.  Then for every point $A$ in the group $G_1$, we have
\begin{equation}\label{intro10}
|Y| \prod_{m=1}^M \log (L_m + 1) - \prod_{m=1}^M L_m \le \sum_{\substack{\bxi \in X\\\bxi \not= \bzero}} F_{\bL}(A\bxi).
\end{equation}
\end{theorem}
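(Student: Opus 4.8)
The plan is to reduce the theorem to a one-variable statement: for each positive integer $L$ there is a function $m_L\colon\R/\Z\to\R$ with (i)~$m_L(x)\le g_L(x)$ for all $x$, where $g_L(x):=\min\{L,(2\|x\|)^{-1}\}$; (ii)~$\widehat{m_L}(k)\ge 0$ for every $k\in\Z$; and (iii)~$\widehat{m_L}(0)\ge\log(L+1)$. Granting this, put $M_{\bL}(\bx)=\prod_{m=1}^M m_{L_m}(x_m)$ on $G_2$. Then $M_{\bL}\le F_{\bL}$ pointwise, $\widehat{M_{\bL}}(\bk)=\prod_{m}\widehat{m_{L_m}}(k_m)\ge 0$ for all $\bk\in\Z^M$, and $\widehat{M_{\bL}}(\bzero)=\prod_m\widehat{m_{L_m}}(0)\ge\prod_{m=1}^M\log(L_m+1)$.

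For the minorant I would take the constant-plus-triangles function
\[
m_L(x)=1+\sum_{\ell=2}^{L}\bigl(1-\ell\|x\|\bigr)_+ .
\]
For $\ell\ge 2$ the triangle $(1-\ell\|x\|)_+$ is supported in $\{\|x\|<1/\ell\}$, a set contained in a half-period, and equals $\ell\bigl(\mathbf 1_{(-\frac1{2\ell},\frac1{2\ell})}*\mathbf 1_{(-\frac1{2\ell},\frac1{2\ell})}\bigr)$; hence it has nonnegative Fourier coefficients $\ell\,\pi^{-2}k^{-2}\sin^2(\pi k/\ell)$ and mean $1/\ell$. So (ii) holds, and $\widehat{m_L}(0)=\sum_{\ell=1}^{L}\tfrac1\ell=H_L\ge\int_1^{L+1}\frac{\dx}{x}=\log(L+1)$, which is (iii). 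For (i): since each summand is $\le 1$, $m_L\le 1+(L-1)=L$; and for $x\ne 0$, setting $t=\|x\|$ and $N=\min\{L,\lfloor1/t\rfloor\}$, one computes $m_L(x)=N-\tfrac12 tN(N+1)+t$, so that $m_L(x)\le(2t)^{-1}$ is equivalent, after multiplying by $2t$, to $-t^2(N-1)(N+2)+2tN-1\le 0$; this is a downward parabola in $t$ whose maximal value is $\frac{2-N}{(N-1)(N+2)}\le 0$ for $N\ge 2$ (the cases $L=1$ and $N=1$ are immediate). Thus $m_L\le g_L$ everywhere, which proves the lemma.

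To finish, fix $A\in G_1$ and write $r_Y(\bxi)=|Y\cap(Y+\bxi)|$, so that $r_Y(\bxi)\le|Y|$ for all $\bxi$ and $r_Y(\bzero)=|Y|$. Since $F_{\bL}\ge 0$,
\[
|Y|\sum_{\bxi\in X}F_{\bL}(A\bxi)\ \ge\ \sum_{\bxi\in X}r_Y(\bxi)F_{\bL}(A\bxi)\ =\ \sum_{\bwy,\bwy'\in Y}F_{\bL}\bigl(A\bwy-A\bwy'\bigr).
\]
Now $F_{\bL}\ge M_{\bL}$, and since $\widehat{M_{\bL}}(\bk)=O\bigl(\prod_m k_m^{-2}\bigr)$ the Fourier expansion of $M_{\bL}$ converges absolutely; therefore, writing $e(t)=e^{2\pi it}$,
\[
\sum_{\bwy,\bwy'\in Y}F_{\bL}\bigl(A\bwy-A\bwy'\bigr)\ \ge\ \sum_{\bwy,\bwy'\in Y}M_{\bL}\bigl(A\bwy-A\bwy'\bigr)\ =\ \sum_{\bk\in\Z^M}\widehat{M_{\bL}}(\bk)\,\Bigl|\sum_{\bwy\in Y}e(\bk\cdot A\bwy)\Bigr|^2\ \ge\ \widehat{M_{\bL}}(\bzero)\,|Y|^2,
\]
the last inequality because every term of the series is nonnegative. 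Dividing by $|Y|$ and then peeling off the term $\bxi=\bzero$, where $F_{\bL}(\bzero)=\prod_m L_m$, gives
\[
\sum_{\substack{\bxi\in X\\\bxi\ne\bzero}}F_{\bL}(A\bxi)\ \ge\ \widehat{M_{\bL}}(\bzero)\,|Y|-\prod_{m=1}^M L_m\ \ge\ |Y|\prod_{m=1}^M\log(L_m+1)-\prod_{m=1}^M L_m,
\]
which is (\ref{intro10}).

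The only place that needs a genuine idea is the construction of $m_L$. Minorizing $g_L=\sum_{n=1}^{L}\mathbf 1[\|x\|<\tfrac1{2n}]$ interval by interval, or using Fejér or Poisson kernels, loses roughly a factor of two in the mean (a Turán-type phenomenon for positive-definite minorants of an interval indicator), which would only give $(\log 2)\,\log L_m$ or the like; the nested family of triangles above is calibrated precisely so that $\widehat{m_L}(0)=H_L$ exactly, and this is what makes the constant $\log(L_m+1)$ in (\ref{intro10}) come out sharp. Everything else—the inequality $r_Y\le|Y|$, the positive-definiteness expansion, and the interchange of summations—is routine.
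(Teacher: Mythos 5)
Your proof is correct, and while it follows the same global strategy as the paper (minorize $F_{\bL}$ by a product of one-variable minorants with nonnegative Fourier coefficients, then expand $\sum_{\bwy,\bwy'\in Y} M_{\bL}(A\bwy-A\bwy')$ positively), your one-variable construction is genuinely different. The paper builds the minorant $\tau_{L-1}(x) = \psi*j_L(x)/\psi*j_1(x)$, a trigonometric polynomial of degree $L-1$ obtained from the Beurling--Selberg/Vaaler machinery, and then proves $\log(L+1)\le\widehat{\tau}_{L-1}(0)$ by a somewhat delicate argument (sampling at $(L+1)$-th roots of unity, convexity of $-2\pi\psi(x)/\sin 2\pi x$, and Simpson's rule). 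You instead take the explicit, elementary function $m_L(x)=1+\sum_{\ell=2}^L(1-\ell\|x\|)_+$, which is not a trigonometric polynomial but whose Fourier coefficients are nonnegative and $O(k^{-2})$ (sums of rescaled Fej\'er triangles), giving $\widehat{m_L}(0)=H_L\ge\log(L+1)$ exactly and immediately. Your verification that $m_L(x)\le(2\|x\|)^{-1}$ via the quadratic $-t^2(N-1)(N+2)+2tN-1\le 0$ is correct, including the edge cases $N=1$ and $L=1$. What the paper's approach buys is a degree-$(L-1)$ polynomial minorant (which matters for the large-sieve arguments elsewhere in the paper but not for Theorem 1.1); what your approach buys is a much shorter and more transparent proof of the single inequality $\widehat{m_L}(0)\ge\log(L+1)$, which is where the paper has to work hardest. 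One small slip in your closing commentary: $\sum_{n=1}^L\mathbf{1}[\|x\|<\tfrac1{2n}]$ is only a minorant of $g_L$, not equal to it; this is a side remark and does not affect the proof.
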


Obviously (\ref{intro10}) is of interest when the left hand side is positive, and so when $|Y|$ is sufficiently large.  In the
very special case $M = N = 1$ and $Y = \{0, 1, 2, \dots , K\}$, the inequality (\ref{intro10}) states that
\begin{equation}\label{intro11}
(K+1)\log(L+1) - L \le \sum_{\substack{k=-K\\k \not= 0}}^K \min\big\{L, (2\|k\alpha\|)^{-1}\big\} 
			\le \sum_{\substack{k=-K\\k \not= 0}}^K (2\|k\alpha\|)^{-1}
\end{equation}
for all $\alpha$ in $\R/\Z$.  By taking $L = K$ in (\ref{intro11})
we obtain a more precise form of an inequality obtained by Hardy and Littlewood in \cite{HL1922} and \cite{HL1930}.
Their method, and later refinements of Haber and Osgood \cite[Theorem 2]{HO}, made assumptions on the continued fraction 
expansion of $\alpha$.  The method we develop here uses a special trigonometric polynomial with nonnegative Fourier
coefficients that minorizes the function $\bx \mapsto F_{\bL}(\bx)$.  Once such a trigonometric polynomial is determined, we appeal to ideas
formulated by H.~L.~Montgomery in \cite[Chapter 5, Theorem 9]{HLM}.  

More generally, we can set $L_1 = L_2 = \cdots = L_M$ in (\ref{intro10}), and then select this common value in an optimal way.
This leads to the following lower bound for sums of the simpler expression $F(A\bxi)$.

\begin{corollary}\label{corintro2}  Let $Y \subseteq \Z^N$ be a finite, nonempty subset of integer lattice
points with difference set $X = Y - Y$.  Then for every point $A$ in the group $G_1$ we have
\begin{equation}\label{intro14}
|Y|\biggl(\frac{\log |Y|}{M}\biggr)^M - |Y| \le \sum_{\substack{\bxi \in X\\\bxi \not= \bzero}} F(A\bxi).
\end{equation}
\end{corollary}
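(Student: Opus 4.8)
The plan is to deduce Corollary~\ref{corintro2} from Theorem~\ref{thmintro1} by choosing the common value $L = L_1 = \cdots = L_M$ optimally. Setting all $L_m$ equal to an integer parameter $L$, the inequality (\ref{intro10}) becomes
\begin{equation*}
|Y| \bigl(\log(L+1)\bigr)^M - L^M \le \sum_{\substack{\bxi \in X\\\bxi \not= \bzero}} F_{\bL}(A\bxi) \le \sum_{\substack{\bxi \in X\\\bxi \not= \bzero}} F(A\bxi),
\end{equation*}
where the last inequality holds because $F_{\bL}(\bx) \le F(\bx)$ pointwise (each factor $\min\{L, (2\|x_m\|)^{-1}\} \le (2\|x_m\|)^{-1}$). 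So it suffices to choose $L$ so that the left-hand side is at least $|Y|\bigl(\tfrac{\log|Y|}{M}\bigr)^M - |Y|$.

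First I would treat the real-variable optimization heuristically: the function $t \mapsto |Y| (\log(t+1))^M - t^M$ suggests taking $\log(L+1)$ of size roughly $\tfrac{1}{M}\log|Y|$, i.e. $L+1 \approx |Y|^{1/M}$. Concretely I would set $L = \lceil |Y|^{1/M} \rceil - 1$, or perhaps more safely $L = \lfloor |Y|^{1/M}\rfloor$, so that $L$ is a positive integer (note $|Y| \ge 1$; if $|Y| = 1$ the claimed bound is $\le 0$ and holds trivially since the right-hand side is a sum of nonnegative terms, so we may assume $|Y| \ge 2$ and then $L \ge 1$). With this choice $L \le |Y|^{1/M}$, hence $L^M \le |Y|$, which handles the subtracted term: $-L^M \ge -|Y|$. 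For the main term we need $\log(L+1) \ge \tfrac{1}{M}\log|Y|$, equivalently $(L+1)^M \ge |Y|$; with $L = \lfloor|Y|^{1/M}\rfloor$ we have $L+1 > |Y|^{1/M}$, so $(L+1)^M > |Y|$, giving $|Y|(\log(L+1))^M \ge |Y|(\tfrac{\log|Y|}{M})^M$. Combining the two estimates yields
\begin{equation*}
|Y|\biggl(\frac{\log|Y|}{M}\biggr)^M - |Y| \le |Y|\bigl(\log(L+1)\bigr)^M - L^M \le \sum_{\substack{\bxi \in X\\\bxi \not= \bzero}} F(A\bxi),
\end{equation*}
which is (\ref{intro14}).

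I expect the only real point requiring care is the bookkeeping around integrality of $L$ and the degenerate small cases. One must confirm that $\lfloor |Y|^{1/M}\rfloor \ge 1$ (true once $|Y| \ge 2^M$, and for $2 \le |Y| < 2^M$ one checks directly that the claimed lower bound $|Y|(\tfrac{\log|Y|}{M})^M - |Y|$ is negative — since $\log|Y| < M\log 2 < M$ forces $(\tfrac{\log|Y|}{M})^M < 1$ — so the inequality is again trivial as the right-hand side is nonnegative), and that the two inequalities $L^M \le |Y|$ and $(L+1)^M \ge |Y|$ both hold simultaneously for the chosen $L$, which is immediate from $L \le |Y|^{1/M} < L+1$. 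There is no analytic obstacle here; the corollary is purely a matter of optimizing the parameter in Theorem~\ref{thmintro1} and disposing of trivial ranges of $|Y|$.
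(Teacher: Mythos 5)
Your proof is correct and takes essentially the same route as the paper: the paper likewise sets all $L_m$ equal to the integer satisfying $L_m \le |Y|^{1/M} < L_m + 1$ (i.e.\ $\lfloor |Y|^{1/M}\rfloor$), from which $\prod_m L_m \le |Y|$ and $\prod_m \log(L_m+1) \ge (\log|Y|/M)^M$ follow immediately. One small remark: your worry about whether $\lfloor|Y|^{1/M}\rfloor\ge 1$ is unnecessary, since $|Y|\ge 1$ already gives $|Y|^{1/M}\ge 1$ and hence $L\ge 1$ for all nonempty $Y$, so the separate treatment of $2\le |Y|<2^M$ is harmless but redundant.
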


We also prove upper bounds for sums of the form
\begin{equation}\label{intro15}
\sum_{\substack{\bxi \in X\\\bxi \not= \bzero}} F(A\bxi),
\end{equation}
where $X$ is a finite, nonempty subset of lattice points in $\Z^N$, but now we no longer assume that $X$ is a difference set.
If $X$ contains a nonzero point then it is clear that the sum (\ref{intro15}) is not a bounded function of $A$, and therefore 
no uniform upper bound for all $A$ in $G_1$ is possible.  However, we are able to give an upper bound, comparable with 
the lower bound (\ref{intro14}), that holds at all points $A$ in $G_1$ outside a subset of small $\mu_1$-measure.

\begin{theorem}\label{thmintro3}   Let $X \subseteq \Z^N$ be a finite, nonempty subset of lattice points with cardinality $|X|$.  If 
$0 < \ep< 1$, then the inequality
\begin{equation}\label{intro18}
\sum_{\substack{\bxi \in X\\\bxi \not= \bzero}} F(A\bxi) \le 8^M \ep^{-1} |X| \sum_{m=0}^M \frac{\bigl(\log \ep^{-1}|X|\bigr)^m}{m!}
\end{equation}
holds at all point $A$ in $G_1$ outside a set of $\mu_1$-measure at most $\ep$.
\end{theorem}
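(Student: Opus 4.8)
The plan is to bound the $\mu_1$-integral of the sum (\ref{intro15}) and then invoke Markov's inequality. Since each summand $F(A\bxi)$ with $\bxi \not= \bzero$ is a nonnegative measurable function on $G_1$ and $\bxi \mapsto A\bxi$ is measure preserving onto $G_2$ (as recalled after (\ref{intro4})), we have
\begin{equation*}
\int_{G_1} F(A\bxi)\, \dmu_1(A) = \int_{G_2} F(\bx)\, \dmu_2(\bx) = \prod_{m=1}^M \int_{\R/\Z} (2\|x_m\|)^{-1}\, \dx_m = +\infty,
\end{equation*}
so $F$ itself is not integrable and we must work with a truncation. The key device is to compare $F$ with $F_{\bL}$: for any positive integers $L_1, \dots, L_M$ we have the pointwise bound $F(\bx) \le F_{\bL}(\bx) + \text{(error where some } \|x_m\| \text{ is small)}$, and more usefully $F_{\bL}$ \emph{is} integrable, with $\int_{\R/\Z}\min\{L,(2\|x\|)^{-1}\}\,\dx = 1 + \log L$, hence $\int_{G_2} F_{\bL}\,\dmu_2 = \prod_m(1+\log L_m)$. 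So first I would fix $L_1 = \cdots = L_M = L$ for a parameter $L$ to be chosen, and split the sum into the part where $F(A\bxi) \le L$ (which is $\le F_{\bL}(A\bxi)$) and the part where some coordinate has $\|(A\bxi)_m\| < (2L)^{-1}$.

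For the first part, $\int_{G_1}\sum_{\bxi\in X, \bxi\not=\bzero} F_{\bL}(A\bxi)\,\dmu_1(A) \le |X|(1+\log L)^M$, and by Markov's inequality the set where this partial sum exceeds $\ep^{-1}|X|(1+\log L)^M$ has measure at most $\ep$. For the second (tail) part, I would estimate it deterministically off a small-measure set: for each $\bxi \not= \bzero$ and each $m$, the set of $A$ with $\|(A\bxi)_m\| < \delta$ has $\mu_1$-measure $2\delta$; one controls the contribution of $\bxi$ on the event that exactly the coordinates in some subset $S \subseteq \{1,\dots,M\}$ are small, where on $S$ we replace $(2\|(A\bxi)_m\|)^{-1}$ by a dyadic bound and integrate, and on the complement of $S$ we use the bound $L$. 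Summing over the $2^M$ choices of $S$ and over $\bxi$, and again applying Markov, gives a contribution of the same shape. The factor $8^M$ in (\ref{intro18}) and the sum $\sum_{m=0}^M (\log\ep^{-1}|X|)^m/m!$ strongly suggest choosing $L \asymp \ep^{-1}|X|$ so that $(1+\log L)^M$ gets re-expanded; indeed $(1+t)^M = \sum_{m=0}^M \binom{M}{m} t^m$, but the appearance of $m!$ in the denominator rather than binomial coefficients indicates that the dyadic tail analysis produces the partial exponential series $\sum_{m\le M} t^m/m!$ directly, with the powers of $2$ from dyadic blocks and another factor of $2$ or $4$ per coordinate from the measure estimates accumulating into $8^M$.

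The main obstacle I anticipate is the bookkeeping in the tail estimate: one must handle the event that several coordinates of $A\bxi$ are simultaneously small, and do so uniformly in $\bxi$, while keeping the constants from blowing up faster than $8^M$ and keeping the power of the logarithm capped at $M$ with $m!$ denominators. This requires organizing the estimate as a sum over subsets $S$ of small coordinates and, within each coordinate in $S$, a dyadic decomposition $2^{-j-1} \le \|(A\bxi)_m\| < 2^{-j}$ with $j$ ranging up to roughly $\log_2(\ep^{-1}|X|)$; the per-level measure bound $2\cdot 2^{-j}$ against the per-level value bound $2^{j}$ yields a bounded contribution per level, and summing $\log_2(\ep^{-1}|X|)$ levels across $|S|$ coordinates while distributing a fixed $\ep$-budget of exceptional measure among the finitely many $(\bxi, S, \text{level-vector})$ configurations is what produces the stated partial-exponential factor. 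Once the two Markov steps are combined (splitting the $\ep$ budget as, say, $\ep/2 + \ep/2$ and absorbing the resulting $2$'s into the $8^M$), the claimed inequality follows, and I would finally optimize the free parameter $L$ to land exactly on $\ep^{-1}|X|$ inside the logarithm.
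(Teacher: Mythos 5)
Your strategy has the right broad shape — push forward Haar measure along $A \mapsto A\bxi$, bound the first moment of a truncated sum, invoke Markov, and handle the tail separately — and this matches the skeleton of the paper's argument in Section~6 (Theorem~\ref{thmub9} splits $G_1$ into $\D(\eta)$, where $F(A\bxi) < \eta$ for all $\bxi\in X$, and its complement, then uses Markov on the truncated first moment and a union bound on the complement). But the central device that makes the paper's estimate land exactly on $\sum_{m=0}^M (\log\lambda)^m/m!$ is missing from your proposal, and your substitute for it is not carried out.

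The paper never does a dyadic/subset decomposition over ``small coordinates.'' Instead, it computes the exact distribution of $F(A\bxi)$ for a single nonzero $\bxi$: the coordinates $\log(2\|\beta_m\|)$, $m=1,\dots,M$, are i.i.d.\ on $(G_2,\mu_2)$ with density $e^x\mathbf{1}_{x\le 0}$, so their sum has the $M$-fold convolution density $(-x)^{M-1}e^x/(M-1)!$ on $x\le 0$; this yields (Corollary~\ref{corub8}) the clean incomplete-gamma identity $\mu_1\{A: \lambda\le F(A\bxi)\} = \frac{1}{(M-1)!}\int_\lambda^\infty (\log x)^{M-1}x^{-2}\,\dx$, from which both the exceptional-set measure (\ref{ub55}) and the truncated expectation (\ref{ub58}) follow by one-line calculus. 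The optimization over the truncation level (setting $\eta=\lambda$ in (\ref{ub60})--(\ref{ub61})) then produces $\frac{1}{M!}\int_\lambda^\infty(\log x)^M x^{-2}\,\dx$, and the identity (\ref{ub62}) converts this into $\lambda^{-1}\sum_{m=0}^M(\log\lambda)^m/m!$; the $8^M$ comes out of solving the resulting implicit equation for $\lambda$ in (\ref{ub66})--(\ref{ub67}), not from absorbing stray factors of $2$. Your suggestion that the $m!$ denominators ``strongly suggest'' a dyadic tail sum over subsets $S$ and level vectors is a plausible guess, but you would have to actually perform that bookkeeping and show it reproduces the partial exponential series with constant $8^M$ — that is precisely the technical content of the theorem, and it is not present. (You also have a small slip in the first truncation step: $F(\bx)\le L$ does not imply $F(\bx)\le F_{\bL}(\bx)$, since $F_{\bL}\le F$ always; the correct statement is that $F=F_{\bL}$ on the set where no coordinate has $\|x_m\|<(2L)^{-1}$.) In short, the Markov/truncation framework is right, but the exact-distribution computation that makes the bound sharp is the missing idea, and without it the proof is not complete.
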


The inequality (\ref{intro18}) can be used to prove metric theorems of various sorts.   Here is an example. 

\begin{corollary}\label{cormet4}  Let $0 < \eta$, and let $1 < C_1$ and $1 < C_2$ be constants.  Let 
$Y_1, Y_2, \dots , Y_{\ell}, \dots $
be a sequence of finite subsets of $\Z^N$ such that
\begin{equation}\label{met1}
C_1^{\ell} \le |Y_{\ell}| \quad\text{for each $\ell = 1, 2, \dots ,$}
\end{equation}
and write
\begin{equation*}\label{met2}
\X = \bigcup_{\ell = 1}^{\infty} \big\{X \subseteq Y_{\ell}: |Y_{\ell}| \le C_2 |X|\bigr\}.
\end{equation*}
Then for $\mu_1$-almost all points $A$ in $G_1$, the inequality
\begin{equation}\label{met3}
\sum_{\substack{\bxi \in X\\\bxi \not= \bzero}} F(A\bxi) \ll_{A, \eta, M} |X| \bigl(\log 3|X|\bigr)^{M+1} \bigl(\log\log 27|X|\bigr)^{1 + \eta}
\end{equation}
holds for all subsets $X$ in $\X$.
\end{corollary}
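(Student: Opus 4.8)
\textbf{Proof proposal for Corollary~\ref{cormet4}.}
The plan is to apply Theorem~\ref{thmintro3} to each $Y_\ell$ with a parameter $\ep=\ep_\ell$ that tends to $0$ slowly, invoke the Borel--Cantelli lemma, and then convert the resulting bound --- which is naturally expressed through $|Y_\ell|$ and $\ell$ --- into the bound~(\ref{met3}) by means of the two-sided comparison $C_1^{\ell}\le|Y_\ell|\le C_2|X|$.

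First I would set, for each $\ell\ge1$,
\[
\ep_\ell=\bigl(\log 3|Y_\ell|\bigr)^{-1}\bigl(1+\log\ell\bigr)^{-(1+\eta)} .
\]
Then $0<\ep_\ell<1$, since $\log 3|Y_\ell|\ge\log 3>1$ and $1+\log\ell\ge1$; moreover, because $|Y_\ell|\ge C_1^{\ell}$ we get $\ep_\ell\le\bigl(\ell\log C_1\bigr)^{-1}\bigl(1+\log\ell\bigr)^{-(1+\eta)}$, and the integral test gives $\sum_{\ell\ge1}\ep_\ell<\infty$. Applying Theorem~\ref{thmintro3} with $X=Y_\ell$ and $\ep=\ep_\ell$ produces a set $E_\ell\subseteq G_1$ with $\mu_1(E_\ell)\le\ep_\ell$ outside which
\[
\sum_{\substack{\bxi\in Y_\ell\\ \bxi\ne\bzero}}F(A\bxi)\le 8^M\ep_\ell^{-1}|Y_\ell|\sum_{m=0}^M\frac{\bigl(\log\ep_\ell^{-1}|Y_\ell|\bigr)^m}{m!} .
\]
Since $\sum_\ell\mu_1(E_\ell)<\infty$, Borel--Cantelli gives $\mu_1\bigl(\limsup_\ell E_\ell\bigr)=0$, so for $\mu_1$-almost every $A$ there is an $\ell_0(A)$ with $A\notin E_\ell$ for all $\ell\ge\ell_0(A)$. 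For such $\ell$ I would simplify the right-hand side: writing $t=\log\ep_\ell^{-1}|Y_\ell|$ one has $t>1$, hence $\sum_{m=0}^Mt^m/m!\le e\,t^M$; and since $|Y_\ell|\ge C_1^{\ell}$ forces $\ell\ll_{C_1}\log|Y_\ell|$, one checks $t=\log|Y_\ell|+\log\log 3|Y_\ell|+(1+\eta)\log(1+\log\ell)\ll_{C_1,\eta}\log 3|Y_\ell|$. Using $\ep_\ell^{-1}=\log 3|Y_\ell|\,(1+\log\ell)^{1+\eta}$ this yields, for $\ell\ge\ell_0(A)$,
\[
\sum_{\substack{\bxi\in Y_\ell\\ \bxi\ne\bzero}}F(A\bxi)\ll_{\eta,M,C_1}|Y_\ell|\bigl(\log 3|Y_\ell|\bigr)^{M+1}\bigl(1+\log\ell\bigr)^{1+\eta} .
\]

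Now fix $A$ in the full-measure set above and take any $X\in\X$, say $X\subseteq Y_\ell$ with $|Y_\ell|\le C_2|X|$. Because $|Y_\ell|\ge C_1^{\ell}\to\infty$, every $X\in\X$ satisfies $|X|\ge|Y_\ell|/C_2\to\infty$, so all but finitely many $X\in\X$ come with $\ell\ge\ell_0(A)$ and with $|X|$ large; for those, $F\ge1$ gives $\sum_{\bxi\in X,\,\bxi\ne\bzero}F(A\bxi)\le\sum_{\bxi\in Y_\ell,\,\bxi\ne\bzero}F(A\bxi)$, and then $|Y_\ell|\le C_2|X|$ (so $\log 3|Y_\ell|\ll_{C_2}\log 3|X|$) together with $C_1^{\ell}\le C_2|X|$ (so $1+\log\ell\ll_{C_1,C_2}\log\log 27|X|$, the constant $27$ only serving to keep $\log\log 27|X|>0$) turns the last display into
\[
\sum_{\substack{\bxi\in X\\ \bxi\ne\bzero}}F(A\bxi)\ll_{\eta,M,C_1,C_2}|X|\bigl(\log 3|X|\bigr)^{M+1}\bigl(\log\log 27|X|\bigr)^{1+\eta},
\]
which is~(\ref{met3}). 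The finitely many remaining $X\in\X$ are handled by discarding the further $\mu_1$-null set of $A$ for which $F(A\bxi)=\infty$ for some $\bxi\ne\bzero$: for the remaining $A$ each of these finitely many sums is a finite number, while the right side of~(\ref{met3}) is bounded below by the positive constant $(\log 3)^{M+1}(\log\log 27)^{1+\eta}$, so these cases are absorbed into an implied constant depending on $A$ (and on the fixed data $C_1,C_2$, which is why the bound is written $\ll_{A,\eta,M}$). The only delicate point is the calibration of $\ep_\ell$: it must decay slowly enough that $\sum_\ell\ep_\ell<\infty$, and that very constraint is what produces the factor $\bigl(\log\log 27|X|\bigr)^{1+\eta}$ in~(\ref{met3}); everything else is bookkeeping with the inequalities $C_1^{\ell}\le|Y_\ell|\le C_2|X|$.
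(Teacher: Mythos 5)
Your proposal is correct and follows essentially the same strategy as the paper: calibrate $\ep_\ell$ so that $\sum_\ell \ep_\ell < \infty$ (using $|Y_\ell| \ge C_1^\ell$), apply Theorem~\ref{thmintro3} to each $Y_\ell$, invoke Borel--Cantelli together with the full-measure set on which all the sums $\sum_{\bxi \ne \bzero} F(A\bxi)$ are finite, and then pass from $Y_\ell$ to $X$ via $|Y_\ell| \le C_2|X|$. The only cosmetic difference is that the paper takes $\ep_\ell^{-1} = \log 3|Y_\ell|\,(\log\log 27|Y_\ell|)^{1+\eta}$ rather than your $\ep_\ell^{-1} = \log 3|Y_\ell|\,(1+\log\ell)^{1+\eta}$; the two are interchangeable once one uses $\ell \ll_{C_1} \log|Y_\ell|$, exactly as you do.
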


It is instructive to examine two special cases in more detail.  If $M = 2$, $N =1$, $L_1 = L_2 = K$, and
$Y = \{0, 1, 2, \dots , K\}$, then (\ref{intro14}) asserts that
\begin{align}\label{intro19}
\begin{split}
(K + 1)&\bigl(\hh \log (K + 1)\bigr)^2 - K\\
	&\le \sum_{\substack{k=-K\\k \not= 0}}^K \min\big\{K, (2\|k\alpha_1\|)^{-1}\big\} \min\big\{K, (2\|k\alpha_2\|)^{-1}\big\}\\
	&\le \sum_{\substack{k=-K\\k \not= 0}}^K \bigl(4 \|k\alpha_1\| \|k\alpha_2\|\bigr)^{-1},
\end{split}
\end{align}
for all $\alpha_1$ and $\alpha_2$ in $\R/\Z$.  And in the special case $M = 1$, $N = 2$, $L_1 = K$ and 
\begin{equation*}\label{intro20}
Y = \bigg\{\begin{pmatrix} k_1\\
                                             k_2\end{pmatrix} : 0 \le k_1 \le K,\ 0 \le k_2 \le K\bigg\},
\end{equation*}
the lower bound (\ref{intro14}) takes the form
\begin{align}\label{intro21}
\begin{split}
(K+1)^2& \log (K+1) - K^2 \\
	&\le \underset{\bk \not= \bzero}{\sum_{k_1=-K}^K\sum_{k_2=-K}^K} \min\big\{K, \bigl(2 \|k_1\alpha_1 + k_2\alpha_2\|\bigr)^{-1}\big\}\\
	&\le \underset{\bk \not= \bzero}{\sum_{k_1=-K}^K\sum_{k_2=-K}^K} \bigl(2 \|k_1\alpha_1 + k_2\alpha_2\|\bigr)^{-1}
\end{split}
\end{align}
for all $\alpha_1$ and $\alpha_2$ in $\R/\Z$.  Theorem \ref{thmintro3} shows that there always exist points $\alpha_1$ and $\alpha_2$ in
$\R/\Z$ for which the lower bounds on the left of (\ref{intro19}) and (\ref{intro21}) cannot be significantly improved.  In both cases the
points $\alpha_1$ and $\alpha_2$ evidently depend on $K$.  This raises the question: do there exist points
$\alpha_1$ and $\alpha_2$ in $\R/\Z$ such that 
\begin{equation}\label{intro24}
\sum_{\substack{k=-K\\k \not= 0}}^K \bigl(\|k\alpha_1\| \|k\alpha_2\|\bigr)^{-1} \ll_{\alpha_1, \alpha_2} K (\log K)^2
\end{equation} 
as $K \rightarrow \infty$?  And similarly,  do there exist points $\alpha_1$ and $\alpha_2$ in $\R/\Z$ such that 
\begin{equation}\label{intro25}
\underset{\bk \not= \bzero}{\sum_{k_1=-K}^K\sum_{k_2=-K}^K} \|k_1\alpha_1 + k_2\alpha_2\|^{-1} \ll_{\alpha_1, \alpha_2} K^2 \log K
\end{equation}
as $K \rightarrow \infty$?  We will show that the inequality (\ref{intro24}) holds whenever 
the pair $\{\alpha_1, \alpha_2\}$ is a counterexample to the Littlewood conjecture, (see \cite{Venk2008} for a survey of recent work
on this conjecture.)  That is, if $\alpha_1$ and $\alpha_2$ are points in $\R/\Z$ such that
\begin{equation}\label{intro26}
0 < \liminf_{k \rightarrow \infty} k \|k \alpha_1\|\|k \alpha_2\|,
\end{equation}
then we will show that (\ref{intro24}) holds.  We will also prove a basic transference theorem which shows that
(\ref{intro26}) implies the estimate (\ref{intro25}).  In fact, we will prove generalizations of these
results to a special class of $M \times N$ matrices $A$ in the group $G_1$.

\section{Strongly badly approximable matrices}

Let $A = (\alpha_{mn})$ be an $M\times N$ matrix in the compact group $G_1 = (\R/\Z)^{MN}$, 
let $0 < \ep_m \le \h$ be real numbers for $m = 1, 2, \dots , M$, and let $K_1, K_2, \dots , K_N$, be nonnegative integers,
not all of which are zero.  A general form of Dirichlet's theorem on Diophantine approximation states that if
\begin{equation}\label{sba1}
1 \le \ep_1 \ep_2 \cdots \ep_M (K_1 + 1)(K_2 + 1) \cdots (K_N + 1),
\end{equation}
then there exists a vector $\bxi \not= \bzero$ in $\Z^N$ such that
\begin{equation}\label{sba2}
\|\alpha_{m1} \xi_1 + \alpha_{m2} \xi_2 + \cdots + \alpha_{mN} \xi_N \| \le \ep_m\quad\text{for}\ m = 1, 2, \dots , M,
\end{equation}
and
\begin{equation}\label{sba3}
|\xi_n| \le K_n\quad \text{for}\ n = 1, 2, \dots , N.
\end{equation}
Dirichlet's theorem is usually stated for matrices $A = (\alpha_{mn})$ having real entries.  However, it is obvious that (\ref{sba2})
depends only on the image of each matrix entry $\alpha_{mn}$ in $\R/\Z$.  For our purposes it is important
that the matrix $A = (\alpha_{mn})$ belongs to a compact group, and so we work with matrices $A$ in $G_1$.

We recall (see Perron \cite{P1921}, or Schmidt \cite{WMS2}) that the matrix $A = (\alpha_{mn})$ in
$G_1$ is {\it badly approximable} if there exists a positive constant $\beta = \beta(A)$ such that the inequality
\begin{equation}\label{sba7}
0 < \beta(A) \le \max_{1 \le m \le M} \bigg\{\Big\|\sum_{n=1}^N \alpha_{mn} \xi_n \Big\|\bigg\}^M
	\max_{1 \le n \le N} \big\{|\xi_n|\big\}^N
\end{equation}
holds for all points $\bxi \not= \bzero$ in $\Z^N$.   We now introduce a still more restrictive condition on $A$. 
We say that $A = (\alpha_{mn})$ in $G_1$ is {\it strongly badly approximable} if 
there exists a positive constant $\gamma = \gamma(A)$ such that the inequality
\begin{equation}\label{sba9}
0 < \gamma(A) \le \biggl(\prod_{m=1}^M \Big\|\sum_{n=1}^N \alpha_{mn} \xi_n\Big\|\biggr)
	\biggl(\prod_{n=1}^N \bigl(|\xi_n| + 1\bigr)\biggr)
\end{equation}
holds for all points $\bxi \not= \bzero$ in $\Z^N$.  It follows from the general form of Dirichlet's theorem that if $A$ is
strongly badly approximable then $0 < \gamma(A) \le 1$.  

If $M = N = 1$, then it is clear that $A = (\alpha_{11})$ is badly approximable if and only if $A$ is strongly badly approximable.  
And it is well known (see \cite[Chapter I, Corollary to Theorem IV]{Cassels1965}) that $A = (\alpha_{11})$ is badly 
approximable if and only if the partial quotients in the continued fraction expansion of $\alpha_{11}$ are bounded.  
In the remaining cases, that is, in case $M + N \ge 3$, it was shown
by O.~Perron \cite{P1921} (see also \cite[Theorem 4B]{WMS2}) that badly approximable matrices $A = (\alpha_{mn})$ exist.
However, in case $M + N \ge 3$, it is a difficult open problem to establish the existence of a strongly badly approximable 
matrix in $G_1$. 

Let $I \subseteq \{1, 2, \dots , M\}$ and $J \subseteq \{1, 2, \dots , N\}$ be nonempty subsets, and let $A(I, J)$ be the
$|I|\times|J|$ submatrix of $A$ with rows indexed by the elements of $I$ and with columns indexed by the
elements of $J$.  If $A$ is strongly badly approximable, then it follows from (\ref{sba9}) that 
\begin{equation}\label{sba10}
0 < \gamma(A) \le \biggl(\prod_{m \in I} \Big\|\sum_{n \in J} \alpha_{mn} \xi_n\Big\|\biggr)
	\biggl(\prod_{n \in J} \bigl(|\xi_n| + 1\bigr)\biggr)
\end{equation}
holds for all point $\bxi \not= \bzero$ in $\Z^N$ such that $n \mapsto \xi_n$ has support contained in $J$.  Thus each 
submatrix $A(I, J)$ is also strongly badly approximable.  In particular, each matrix entry $\alpha_{mn}$ is a badly
approximable point in $\R/\Z$.

The following result shows that if $A$ is strongly badly approximable, then an upper bound of the 
form (\ref{intro18}) holds for all subsets $X \subseteq \Z^N$ that can be written as a product of intervals, and with an implied constant
that depends only on $A$.

\begin{theorem}\label{thmsba1}  Let $A$ be a strongly badly approximable $M\times N$ matrix in the group $G_1$.
Let $K_1, K_2, \dots , K_N$, be nonnegative integers, not all of which are zero, and
\begin{equation}\label{sba15}
\K = \{\bxi \in \Z^N: 0 \le \xi_n \le K_n\ \text{for}\ n = 1, 2, \dots , N\}.
\end{equation}
Then the inequality
\begin{equation}\label{sba16}
\sum_{\substack{\bxi \in \K\\\bxi \not= \bzero}} F(A\bxi) \ll_A |\K| \bigl(\log 2|\K|\bigr)^M
\end{equation}
holds for all positive integer values of $|\K|$.
\end{theorem}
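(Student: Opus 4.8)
The plan is to estimate the sum $\sum_{\bxi \in \K, \bxi \neq \bzero} F(A\bxi)$ by splitting the range of summation according to the dyadic size of each coordinate $\xi_n$ and, for each such block, using the strong badly approximable hypothesis (\ref{sba9}) to control how small the factors $\|\sum_n \alpha_{mn}\xi_n\|$ can be. Concretely, for each $\bxi$ with $0 \le \xi_n \le K_n$ let $d_n = |\xi_n| + 1$ and set $D = \prod_{n=1}^N d_n$; then (\ref{sba9}) gives $\prod_{m=1}^M \|\sum_n \alpha_{mn}\xi_n\| \ge \gamma(A)/D$, hence $F(A\bxi) = \prod_m (2\|\cdot\|)^{-1} \le D/(2^M \gamma(A))$ for \emph{that single} $\bxi$. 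This crude pointwise bound is not enough on its own, so the real work is to count, for each target value of $\prod_m \|\sum_n \alpha_{mn}\xi_n\|$, how many $\bxi \in \K$ can achieve it.

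The key mechanism is the following counting lemma, which I would extract first: if $\bxi, \bxi'$ are two points of $\K$ with $\bxi \neq \bxi'$, then $\bxi - \bxi'$ is a nonzero lattice point with $|\xi_n - \xi_n'| \le K_n$, so applying (\ref{sba9}) to $\bxi - \bxi'$ bounds $\prod_m \|\sum_n \alpha_{mn}(\xi_n - \xi_n')\|$ from below by $\gamma(A)/\prod_n(K_n+1) \gg_A |\K|^{-1}$. The idea is then that the points $A\bxi \in G_2 = (\R/\Z)^M$ for $\bxi \in \K$ are "spread out": one cannot have too many of them in a small box. Partition $G_2$ into boxes of the form $\prod_{m=1}^M [a_m 2^{-j_m}, (a_m+1)2^{-j_m})$; arguing via (\ref{sba9}) applied to differences, the number of $\bxi \in \K$ lying in any one box of volume $\prod_m 2^{-j_m}$ is $\ll_A 1 + |\K| \prod_m 2^{-j_m}$ (the "$1+$" because there is always possibly one point, and beyond that the separation forces a density bound). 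Summing $F(A\bxi) \asymp \prod_m 2^{j_m}$ over the $\ll_A 2^{j_1 + \cdots + j_M}$ nonempty boxes at scale $(j_1, \dots, j_M)$, one gets a contribution $\ll_A 2^{j_1+\cdots+j_M}\cdot \min\{1,\ |\K| \prod 2^{-j_m}\}$; but a box of volume $\prod 2^{-j_m}$ can contain a point $A\bxi$ with $F(A\bxi) \asymp \prod 2^{j_m}$ only when $\prod 2^{j_m} \ll_A |\K|$, since such a point forces $\|\sum_n \alpha_{mn}\xi_n\| \asymp 2^{-j_m}$ for a single $\bxi$, contradicting (\ref{sba9}) once $\prod 2^{j_m} \gg_A D \cdot \prod_n(K_n+1)$ — wait, one must be careful here, so this is where the argument needs tightening.

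Let me restructure. For $\bxi \neq \bzero$ in $\K$, write $\theta_m(\bxi) = \|\sum_n \alpha_{mn}\xi_n\|$ and group the $\bxi$ according to the integer vector $\bj = (j_1, \dots, j_M)$ with $2^{-j_m - 1} < \theta_m(\bxi) \le 2^{-j_m}$ (so $F(A\bxi) \asymp 2^{j_1 + \cdots + j_M}$). By (\ref{sba9}), $\prod_m \theta_m(\bxi) \ge \gamma(A)/\prod_n(\xi_n+1)$, and since $\xi_n \le K_n$ this is $\ge \gamma(A)/\prod_n(K_n+1) \asymp_A |\K|^{-1}$; hence $2^{-(j_1+\cdots+j_M)} \gg_A |\K|^{-1}$, i.e. $j_1 + \cdots + j_M \le \log_2|\K| + O_A(1)$. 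This bounds the \emph{number of relevant $\bj$}: the number of nonnegative integer vectors $\bj$ with $j_1 + \cdots + j_M \le \log_2 |\K| + O_A(1)$ is $\ll_M (\log 2|\K|)^M$. It remains to show that for each fixed $\bj$, the number of $\bxi \in \K$ with $\theta_m(\bxi) \asymp 2^{-j_m}$ for all $m$ is $\ll_A |\K| \cdot 2^{-(j_1 + \cdots + j_M)}$; then summing $F(A\bxi) \asymp 2^{j_1 + \cdots + j_M}$ over these gives $\ll_A |\K|$ per $\bj$, and multiplying by the number of $\bj$ yields (\ref{sba16}). The density bound for fixed $\bj$ comes from the separation: if $\bxi \neq \bxi'$ both satisfy $\theta_m \le 2^{-j_m}$, then $\theta_m(\bxi - \bxi') \le 2^{-j_m + 1}$ for all $m$ (triangle inequality in $\R/\Z$), so $\prod_m \theta_m(\bxi-\bxi') \le 2^M \cdot 2^{-(j_1 + \cdots + j_M)}$; combined with the lower bound $\prod_m \theta_m(\bxi - \bxi') \ge \gamma(A)/\prod_n(K_n+1)$ from (\ref{sba9}), this would force $2^{-(j_1+\cdots+j_M)} \gg_A |\K|^{-1}$ again but not directly a count. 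To actually count, I would instead observe that the map $\bxi \mapsto (\{\sum_n\alpha_{1n}\xi_n\}, \dots)$ lands the $\bxi$'s of this block into the box $\prod_m[-2^{-j_m}, 2^{-j_m}]$ of volume $2^M \prod_m 2^{-j_m}$ inside $G_2$, and then partition $\K$ itself into translates of a fixed sublattice or use a pigeonhole/Blichfeldt argument: if $\gg 1 + t$ points land in a box of volume $v$, two of their differences land in a box of volume $2^M v$ with $t+1$ points, and iterating, one finds two distinct $\bxi, \bxi'$ with $\theta_m(\bxi - \bxi')$ very small, contradicting (\ref{sba9}) once $t \gg_A v^{-1}/\prod_n(K_n+1) \cdot (\text{something})$ — so the count is $\ll_A 1 + v \cdot \prod_n(K_n + 1) = \ll_A 1 + |\K| \prod_m 2^{-j_m}$, and the "$1+$" contributes $\sum_{\bj} 2^{j_1 + \cdots + j_M} \ll_A |\K| \cdot (\log 2|\K|)^{M-1}$ or so, absorbed in the main term.

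The main obstacle, as the discussion above shows, is making the fixed-$\bj$ counting bound $\#\{\bxi \in \K : \theta_m(\bxi) \asymp 2^{-j_m}\ \forall m\} \ll_A 1 + |\K|\prod_m 2^{-j_m}$ precise and uniform in $\bj$; the clean way is a Blichfeldt-type argument inside the compact group $G_2$, using that the homomorphism $A : G_1 \to G_2$ restricted to $\K$ is injective modulo the strong-bad-approximability separation, i.e., distinct $\bxi \in \K$ have $A\bxi$ separated in a suitable averaged sense. Once that lemma is in hand, everything else is the bookkeeping above: the number of admissible $\bj$ is $\ll_M (\log 2|\K|)^M$ by the constraint $j_1 + \cdots + j_M \le \log_2|\K| + O_A(1)$, each contributes $\ll_A |\K|$, and the error terms from the "$+1$" are lower order. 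I would also need to handle the boundary cases $\theta_m(\bxi) = 0$ — but $\theta_m(\bxi) = 0$ for some $m$ and $\bxi \neq \bzero$ in $\K$ would violate (\ref{sba9}) outright, so no $\bxi \neq \bzero$ in $\K$ has any vanishing factor, and $F(A\bxi)$ is finite throughout the sum.
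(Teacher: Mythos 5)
Once you restructure, your argument follows the paper's Section~7 proof essentially step for step: decompose the nonzero $\bxi \in \K$ by the dyadic annulus $B(\bd) \subseteq (\R/\Z)^M$ containing $A\bxi$; use $\gamma(A) \le P_M(A\bxi)Q_N(\bxi)$ together with $Q_N(\bxi) \le |\K|$ to restrict to $|\bd| \le R \ll_A \log 2|\K|$, which gives $\ll_M (\log 2|\K|)^M$ annuli; establish a per-annulus count $\ll_A \mu_2(B(\bd))|\K| + O_M(1)$; and do the bookkeeping. The genuine divergence is in the counting lemma. The paper invokes Lemma~\ref{lemls3}, a consequence of the large-sieve inequality of Lemma~\ref{lemls1}, which in turn is built on the Beurling--Selberg/Fej\'er majorant of Corollary~\ref{corext2}; you propose a pigeonhole (``Blichfeldt-type'') argument instead. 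Your instinct is sound, and the pigeonhole is in fact cleaner and entirely elementary: each $B(\bd)$ is a disjoint union of $2^M$ boxes of volume $v = 2^{-|\bd|-M}$; subdivide one coordinate of such a box into $k = \lfloor \gamma(A)^{-1}|\K|\,v \rfloor + 1$ equal slices, so that two distinct $\bxi, \bxi' \in \K$ with $A\bxi, A\bxi'$ in the same slice would force $\prod_m\|(A(\bxi - \bxi'))_m\| \cdot \prod_n(|\xi_n - \xi_n'| + 1) \le (v/k)|\K| < \gamma(A)$, contradicting (\ref{sba9}); hence each box holds at most $k \le \gamma(A)^{-1}|\K|\,v + 1$ points. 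Summed over the $2^M$ boxes this recovers (\ref{uba11}) with better constants and no Fourier-analytic input. That said, repair the slips in your sketch before treating it as a proof: the threshold you wrote, ``$t \gg_A v^{-1}/\prod_n(K_n+1)$'', has both dependences inverted --- the contradiction kicks in once $t \gg_A v\prod_n(K_n+1) = v|\K|$, which is what your next sentence correctly asserts; the ``iterating'' of difference sets is unnecessary, since one pigeonhole step as above suffices; and your opening decomposition by dyadic sizes of the $\xi_n$ is a false start that you rightly abandon and should simply delete. Your observation that the $+1$ error terms contribute only $\ll_A |\K|(\log 2|\K|)^{M-1}$ is correct, and is in fact a bit sharper than the crude estimate (\ref{uba18}) the paper uses at that point.
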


An important transference principle in Diophantine approximation (see \cite[Chapter V, Corollary to Theorem II]{Cassels1965}) 
asserts that an $M\times N$ matrix $A$ in $G_1$ is badly approximable if and only if the $N\times M$ transposed 
matrix $A^T$ is badly approximable.  Here we establish this principle for strongly badly approximable matrices.

\begin{theorem}\label{thmsba2}  Let $A = (\alpha_{mn})$ be an $M \times N$ matrix in $G_1$.  Then $A$ is strongly
badly approximable if and only if the $N\times M$ transposed matrix $A^T$ is strongly badly approximable.
\end{theorem}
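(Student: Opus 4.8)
The plan is to prove Theorem \ref{thmsba2} by a transference argument in the spirit of the classical Cassels/Mahler principle, but adapted to the multiplicative (product) height appearing in (\ref{sba9}). By symmetry it suffices to prove one implication: assume $A$ is strongly badly approximable with constant $\gamma(A)$, and deduce that $A^T$ is strongly badly approximable. So suppose, for contradiction, that $A^T$ is not; then for every $\delta > 0$ there is a nonzero $\ceta \in \Z^M$ with
\begin{equation*}
\biggl(\prod_{n=1}^N \Big\|\sum_{m=1}^M \alpha_{mn} \eta_m\Big\|\biggr)\biggl(\prod_{m=1}^M (|\eta_m| + 1)\biggr) < \delta.
\end{equation*}
The goal is to manufacture from such an $\ceta$ a nonzero $\bxi \in \Z^N$ violating (\ref{sba9}) for $A$, provided $\delta$ is chosen small enough in terms of $\gamma(A)$, $M$, and $N$.

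The mechanism I would use is a pigeonhole/geometry-of-numbers step. Write $\|\sum_m \alpha_{mn}\eta_m\| = \tau_n$, so $\prod_n \tau_n \cdot \prod_m(|\eta_m|+1) < \delta$, and put $Q = \max_m(|\eta_m|+1)$. Introduce the real numbers $\beta_n$ with $|\beta_n| \le \h$ representing $\sum_m \alpha_{mn}\eta_m$ modulo $\Z$, so $|\beta_n| = \tau_n$. Consider the lattice $\Lambda \subset \R^{N+1}$ spanned by the vector $(\beta_1, \dots, \beta_N, \theta)$ together with $\theta \cdot (\Z^N \times \{0\})$, where $\theta > 0$ is a scaling parameter to be chosen; $\Lambda$ has determinant $\theta^N$. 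By Minkowski's theorem (in the form giving a nonzero lattice point in a suitably proportioned box — a box of dimensions $\rho_1 \times \cdots \times \rho_N \times \rho_{N+1}$ with $\prod \rho_i \ge 2^{N+1}\theta^N$), one gets integers $\xi_1, \dots, \xi_N$ and $t$, not all zero, with $\|\sum_n \beta_n \xi_n\| = |t\beta_n \text{-combination}|$ small and each $|\xi_n|$ small, where the ``small'' quantities can be distributed across coordinates so that their product is controlled. Concretely, one can arrange $\prod_n(|\xi_n|+1)$ of size roughly $(\prod_n \tau_n)^{-1/1} \cdot (\text{something})$ and $\|\sum_n (\sum_m \alpha_{mn}\eta_m)\xi_n\|$ of size roughly $\delta^{c}$ for an appropriate exponent. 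The key algebraic identity is
\begin{equation*}
\sum_{n=1}^N \Big(\sum_{m=1}^M \alpha_{mn}\eta_m\Big)\xi_n = \sum_{m=1}^M \eta_m \Big(\sum_{n=1}^N \alpha_{mn}\xi_n\Big),
\end{equation*}
which lets us transfer smallness from the $A^T$-side to the $A$-side: since the left-hand side is close to an integer, so is the right-hand side.

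The delicate point is going from the single bound on $\|\sum_m \eta_m(\sum_n \alpha_{mn}\xi_n)\|$ to the full product bound $\prod_m \|\sum_n \alpha_{mn}\xi_n\|$ that (\ref{sba9}) involves. Here I would exploit the fact, noted in the excerpt around (\ref{sba10}), that strong bad approximability of $A$ forces each entry $\alpha_{mn}$ to be badly approximable, and more generally that every submatrix $A(I,J)$ is strongly badly approximable with the same constant $\gamma(A)$. Using these submatrix bounds one can bound each individual $\|\sum_n \alpha_{mn}\xi_n\|$ from below in terms of $\max_n |\xi_n|$, and then run the Minkowski step not on one convex body but on a nested family, or iterate the argument coordinate by coordinate, to force all $M$ of the quantities $\|\sum_n \alpha_{mn}\xi_n\|$ to be simultaneously small with the product of their sizes times $\prod_n(|\xi_n|+1)$ below $\gamma(A)$. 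Balancing the scaling parameters $\theta$ (and the box side-lengths $\rho_i$) against the weights $\tau_n$ and $|\eta_m|+1$ is where the bookkeeping lives; the exponents $M$ and $N$ enter through the dimension of the lattice and through the submatrix inequalities.

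The main obstacle I anticipate is precisely this passage from a bound on a \emph{single} linear form $\sum_m \eta_m(\sum_n \alpha_{mn}\xi_n)$ modulo $1$ to a bound on the \emph{product} $\prod_m \|\sum_n \alpha_{mn}\xi_n\|$: in the classical transference theorem the relevant quantity is a $\max$, and Minkowski's theorem naturally produces $\max$-type bounds, whereas (\ref{sba9}) is inherently multiplicative. Overcoming this requires either a clever choice of anisotropic convex body in the geometry-of-numbers step (with side lengths tuned to the individual $|\eta_m|+1$), together with the submatrix strong-bad-approximability to prevent any single $\|\sum_n\alpha_{mn}\xi_n\|$ from being too large, or an inductive argument on $M$. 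Once the product bound is secured, choosing $\delta$ small enough that the resulting product drops below $\gamma(A)$ gives the contradiction and completes the proof; the reverse implication follows by applying what we proved to $A^T$ in place of $A$, since $(A^T)^T = A$.
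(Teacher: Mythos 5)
Your proposal correctly identifies the main structural obstacle — (\ref{sba9}) is a \emph{product} condition, whereas Minkowski-type arguments naturally output $\max$-type bounds — and you correctly anticipate that both submatrix strong bad approximability and an induction on $M$ will be needed. However, the concrete geometry-of-numbers setup you sketch is not the one that works, and the gap you flag is exactly where your plan would stall.

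Specifically: you propose a lattice in $\R^{N+1}$ built from a single bad vector $\ceta \in \Z^M$, and then want to use Minkowski to produce a $\bxi$ making one linear form $\sum_n \beta_n \xi_n$ small while the $|\xi_n|$ are controlled. This transfers a bound on $\big\|\sum_m \eta_m(\sum_n \alpha_{mn}\xi_n)\big\|$, which is a single form in the $M$ quantities $\sum_n \alpha_{mn}\xi_n$, and you rightly observe that this is not the product $\prod_m \|\sum_n \alpha_{mn}\xi_n\|$ that (\ref{sba9}) demands. Your suggested remedies (anisotropic boxes, submatrices, induction on $M$) are the right ingredients, but in your $\R^{N+1}$ construction there are simply not enough degrees of freedom to carry them out: you are missing the $M$ coordinates needed to make the $M$ quantities $\|\sum_n \alpha_{mn}\xi_n\|$ appear individually. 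The paper instead works in dimension $L = M + N$ and proves the contrapositive-free direct statement: for a \emph{given} nonzero $\bxi \in \Z^N$ with target product $R^L$ as in (\ref{trans48}), it chooses an $L\times L$ diagonal scaling $\Delta$ with $\det\Delta = 1$ so that the lattice point $\Delta B \bpsi$ has \emph{all coordinates equal to} $R$; this is the decisive anisotropic trick that converts the product into the $L$-th power of a single quantity, giving $\lambda_1 \le R$ directly. The second missing ingredient is Mahler's duality theorem $1 \le \lambda_1 \lambda_L^*$ relating the successive minima of $\M$ with respect to $C_L$ (the cube) to those of the dual lattice $\M^*$ with respect to the dual body $C_L^*$ (the cross-polytope): together with Minkowski's second theorem for the dual minima this yields $(\lambda_1^*)^{L-1}/L! \le \lambda_1$ as in (\ref{trans43}), so lower-bounding $\lambda_1^*$ lower-bounds $R$. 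Strong bad approximability of $A^T$ is then applied not to your original $\ceta$ but to the integer part $\bu$ of a dual lattice vector achieving $\lambda_1^*$; the AM--GM inequality and the identity (\ref{trans86}) bound $L\gamma(A^T)^{1/L}$ by $\lambda_1^* + \sum_m \delta_m^{-1}$, and it is precisely the extra term $\sum_m\delta_m^{-1}$ that forces the induction on $M$ (using the submatrix bounds (\ref{trans94})--(\ref{trans96})) to close the argument. So: right obstacle identified, right auxiliary tools anticipated, but the lattice dimension should be $M+N$, the anisotropic scaling must be tied to $\bxi$ (not to $\ceta$), and Mahler's duality — which you never invoke — is what makes the bookkeeping go through.
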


In the special case $M = 1$ and $N = 2$, the statement of Theorem \ref{sba2} was proved, in a different but
equivalent form, by Cassels and Swinnerton-Dyer \cite[section 7, Lemma 5]{casselsHPFSD}.  The more general
case in which $A$ is a $1 \times N$ matrix is stated explicitly by de Mathan \cite[Theorem 1.1]{mathan2012}.  If
\begin{equation*}\label{sba17}
 A = \begin{pmatrix} \alpha_1 & \alpha_2 & \cdots & \alpha_N\end{pmatrix},
\end{equation*}
where $2 \le N$ and $1, \alpha_1, \alpha_2, \cdots , \alpha_N$, is a $\Q$-basis for a real algebraic number field of degree $N+1$,
then it follows from work of Peck \cite{peck1961} that the image 
of $A$ in $G_1$ is not strongly badly approximable.  A further refinement of this result was obtained 
by de Mathan \cite[Theorem 1.4]{mathan2012}.

 If $\alpha_1$ and $\alpha_2$ are points in $\R/\Z$ that satisfy (\ref{intro26}), then it follows immediately that the
 $2 \times 1$ matrix 
 \begin{equation*}\label{sba18}
 A = \begin{pmatrix}  \alpha_1\\    
                                      \alpha_2\end{pmatrix}
 \end{equation*}
is strongly badly approximable.  Hence the $1\times 2$ transposed matrix $A^T$ is strongly badly approximable.
Then an application of Theorem \ref{sba1} to the matrix $A^T$ establishes the bound (\ref{intro25}).  This shows
that both (\ref{intro24}) and (\ref{intro25}) hold if the pair $\{\alpha_1, \alpha_2\}$ is a counterexample to the
Littlewood conjecture.

\section{Majorizing and minorizing functions}

In this section we collect together variants of inequalities proved in \cite{V}.  And we establish new inequalities for
a special collection of trigonometric polynomials $\tau_{L-1}(x)$ defined in (\ref{tp9}).

We define three entire functions $H(z)$, $J(z)$, and $K(z)$, by 
\begin{equation*}\label{ext1}
H(z) = \left( \dfrac{\sin \pi z}{\pi}\right)^2 \biggl\{ \sum_{m=-\infty}^\infty \sgn (m) (z-m)^{-2} + 2z^{-1}\biggr\},
\end{equation*}
\begin{equation}\label{ext2}
J(z) = \hh H' (z)\ ,\ \text{ and }\ K(z) = \left( \dfrac{\sin \pi z}{\pi z}\right)^2\ .
\end{equation}
Each of these functions is real valued on the real axis and has exponential type $2\pi$. 
The functions $J$ and $K$ are integrable on $\R$ and their Fourier transforms 
\begin{equation*}\label{ext3}
\widehat{J}(t) = \int_{-\infty}^\infty J(x) e(-tx) \dx, \ \text{ and }\ \widehat{K}(t) = \int_{-\infty}^\infty K(x) e(-tx)\dx,
\end{equation*}
are continuous functions supported on $[-1,1]$.  These Fourier transforms are given explicitly (see \cite[Theorem 6]{V}) by 
\begin{align}\label{ext4}
\begin{split}
\widehat{J}(t) &= \pi t(1-|t|) \cot \pi t + |t|\ \text{ if }\ 0<|t|<1,\\
\widehat{K}(t) &= (1-|t|)\ \text{ if }\ 0\le |t|\le 1,\\
\widehat{J}(0) &= 1,\ \text{and}\ \widehat{J}(t) = \widehat K (t) =0\ \text{if}\ 1\le |t|.
\end{split}
\end{align}
It was shown in \cite[Lemma 5]{V} that the functions $H$ and $K$ satisfy the basic inequality
\begin{equation}\label{ext5}
|\sgn(x) - H(x)| \le K(x)
\end{equation}
for all real $x$.  Let $\alpha < \beta$ be real numbers and define
\begin{align}\label{ext6}
\begin{split}
\chi_{\alpha, \beta}(x) &= \hh \sgn(x - \alpha) + \hh \sgn(\beta - x)\\
	&= \begin{cases}  1& \text{if $\alpha < x < \beta$,}\\
	                              \hh& \text{if $x = \alpha$ or $x = \beta$,}\\
	                                 0& \text{if $x < \alpha$ or $\beta < x$.}\end{cases} 
\end{split}
\end{align} 
The function $\chi_{\alpha, \beta}(x)$ is the normalized characteristic function of the real interval with 
endpoints $\alpha$ and $\beta$.  That is, it satisfies
\begin{equation*}\label{ext6.1}
\chi_{\alpha, \beta}(x) = \hh \chi_{\alpha, \beta}(x-)  + \hh \chi_{\alpha, \beta}(x+)
\end{equation*}
at each real number $x$.

\begin{lemma}\label{lemext1}  Let $\alpha < \beta$ be real numbers and let $0 < \delta$.  Then there
exist real entire functions $S(z)$ and $T(z)$ of exponential type at most $2\pi\delta$, such that
\begin{equation}\label{ext7}
S(x) \le \chi_{\alpha, \beta}(x) \le T(x)
\end{equation}
for all real $x$, both $S$ and $T$ are integrable on $\R$, and their Fourier transforms
\begin{equation}\label{ext8}
\tS(y) = \int_{\R} S(x) e(-xy)\ \dx,\quad\text{and}\quad \tT(y) = \int_{\R} T(x) e(-xy)\ \dx,
\end{equation}
are both supported on the interval $[-\delta, \delta]$.  Moreover, these functions satisfy
\begin{equation}\label{ext9}
\tS(0) = \beta - \alpha - \delta^{-1},\quad\text{and}\quad \tT(0) = \beta - \alpha + \delta^{-1}.
\end{equation}
\end{lemma}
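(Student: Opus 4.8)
The plan is to build $S$ and $T$ out of the basic Beurling--Selberg-type ingredients already assembled in \eqref{ext2}--\eqref{ext5}. Recall from \eqref{ext6} that $\chi_{\alpha,\beta}(x) = \tfrac12\sgn(x-\alpha) + \tfrac12\sgn(\beta-x)$. The inequality \eqref{ext5} says $H$ majorizes and minorizes $\sgn$ up to an error controlled by the nonnegative function $K$. So I would first rescale: for a parameter $\delta>0$, set $H_\delta(x) = H(\delta x)$ and $K_\delta(x) = K(\delta x)$, which are real entire of exponential type $2\pi\delta$, and from \eqref{ext5} they satisfy $|\sgn(x) - H_\delta(x)| \le K_\delta(x)$ for all real $x$ (using $\sgn(\delta x) = \sgn(x)$). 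Then define
\begin{equation*}
T(z) = \hh\bigl(H_\delta(z-\alpha) + K_\delta(z-\alpha)\bigr) + \hh\bigl(H_\delta(\beta - z) + K_\delta(\beta - z)\bigr),
\end{equation*}
\begin{equation*}
S(z) = \hh\bigl(H_\delta(z-\alpha) - K_\delta(z-\alpha)\bigr) + \hh\bigl(H_\delta(\beta - z) - K_\delta(\beta - z)\bigr).
\end{equation*}
Both are real entire of exponential type at most $2\pi\delta$, and adding the two instances of \eqref{ext5} (at the arguments $\delta(x-\alpha)$ and $\delta(\beta-x)$) gives $S(x) \le \chi_{\alpha,\beta}(x) \le T(x)$ for all real $x$, which is \eqref{ext7}.

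Next I would verify integrability and compute the Fourier transforms. The function $H_\delta$ itself is not integrable (it is a bounded approximation to $\sgn$), but in the combinations above the slowly-decaying pieces cancel: $H_\delta(x-\alpha) + H_\delta(\beta-x)$ tends to $\sgn(x-\alpha)+\sgn(\beta-x) = 2\chi$ plus an error that is $O(K_\delta)$, hence $S$ and $T$ differ from a compactly-behaved $L^1$ expression by $L^1$ terms; more cleanly, one checks directly that $J_\delta := \hh H_\delta'$ is integrable (as $J$ is), so $H_\delta$ is an antiderivative of an $L^1$ function, and the telescoping combination $H_\delta(z-\alpha) - H_\delta(\beta-z)$ has a well-defined $L^1$ difference structure — I would instead simply invoke the corresponding statement from \cite{V} that these one-sided majorants of intervals are integrable, since the lemma says these are ``variants of inequalities proved in \cite{V}.'' Granting integrability, the Fourier transform of $x\mapsto H_\delta(x-\alpha)$ relates to that of $\sgn$ via $J$: one has $\widehat{K_\delta}$ supported on $[-\delta,\delta]$ by \eqref{ext4} after rescaling ($\widehat{K_\delta}(y) = \delta^{-1}\widehat K(y/\delta)$), and similarly the relevant transform of the $H_\delta$-combination is supported on $[-\delta,\delta]$ because $H$ is built from $\sin\pi z$ factors of type $2\pi$ and its ``derivative'' $J$ has $\widehat J$ supported on $[-1,1]$ by \eqref{ext4}. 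So $\tS$ and $\tT$ are supported on $[-\delta,\delta]$, giving the support claim.

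Finally, the value at $0$: $\tT(0) = \int_\R T(x)\dx$ and $\tS(0) = \int_\R S(x)\dx$. Since $S \le \chi_{\alpha,\beta} \le T$ and $\int \chi_{\alpha,\beta} = \beta - \alpha$, I expect $\tS(0) = \beta-\alpha - \delta^{-1}$ and $\tT(0) = \beta-\alpha+\delta^{-1}$ to come out of the identity $\int_\R T(x)\dx - \int_\R S(x)\dx = \int_\R\bigl(K_\delta(x-\alpha) + K_\delta(\beta-x)\bigr)\dx = 2\widehat{K_\delta}(0) = 2\delta^{-1}\widehat K(0) = 2\delta^{-1}$ (using $\widehat K(0) = 1$ from \eqref{ext4}), together with the symmetric fact that $\int_\R\bigl(H_\delta(x-\alpha)+H_\delta(\beta-x)\bigr)\dx - 2(\beta-\alpha) = 0$, i.e.\ the $H$-part integrates to exactly $2(\beta-\alpha)$. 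That last identity is the one genuine computation: it amounts to showing $\int_\R \bigl(H_\delta(x-\alpha) - \sgn(x-\alpha)\bigr)\dx + \int_\R\bigl(H_\delta(\beta-x) - \sgn(\beta-x)\bigr)\dx = 0$, which follows because $H - \sgn$ is an odd function (clear from the definition of $H$, since the sum $\sum \sgn(m)(z-m)^{-2} + 2z^{-1}$ is odd and $(\sin\pi z/\pi)^2$ is even, so $H$ is odd, hence $H-\sgn$ is odd) and therefore has zero integral, and the two reflected copies contribute with opposite orientation so as to cancel.

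The main obstacle I anticipate is the integrability/Fourier-support bookkeeping for the $H_\delta$-pieces individually — $H_\delta$ alone is not in $L^1$ — so the argument must be organized around the combinations $H_\delta(z-\alpha) \pm H_\delta(\beta-z)$ (or equivalently around $J_\delta$ and integration), rather than treating each summand separately; everything else is a rescaling of \eqref{ext4}--\eqref{ext5} and the oddness of $H-\sgn$.
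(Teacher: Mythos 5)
Your construction of $S$ and $T$ is exactly the paper's once you use that $H$ is odd and $K$ is even (so $H_\delta(\beta - z) = -H\bigl(\delta(z-\beta)\bigr)$ and $K_\delta(\beta - z) = K\bigl(\delta(z-\beta)\bigr)$), and the verification of \eqref{ext7}, the exponential type, and the support claim all follow the paper's outline. The one place you hedge is the integrability of the $H$-part and the evaluation of $\tS(0)$, $\tT(0)$, which you either cite away or handle via oddness of $H - \sgn$. The paper resolves both at once with the convolution identity \eqref{ext12},
\begin{equation*}
\hh H\bigl(\delta(z-\alpha)\bigr) - \hh H\bigl(\delta(z-\beta)\bigr) = \delta\int_{\alpha}^{\beta} J\bigl(\delta(z-y)\bigr)\,\dy = \bigl(\chi_{\alpha,\beta} * \delta J(\delta\,\cdot)\bigr)(z),
\end{equation*}
which exhibits the $H$-combination as a convolution of two $L^1$ functions --- giving integrability immediately --- and evaluates at the Fourier origin to $(\beta-\alpha)\,\tJ(0) = \beta - \alpha$; adding or subtracting the two $K$-terms, each of mass $\tfrac12\delta^{-1}\tK(0) = \tfrac12\delta^{-1}$, then yields \eqref{ext9}. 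Your oddness route to $\int_{\R}\bigl(H_\delta(x-\alpha) + H_\delta(\beta-x)\bigr)\dx = 2(\beta-\alpha)$ is a correct alternative (indeed $H$ is odd since $(\sin\pi z/\pi)^2$ is even and the braced sum is odd, and $|H-\sgn|\le K\in L^1$ so $\int(H-\sgn)=0$), but \eqref{ext12} is the clean bookkeeping device you anticipated would be needed, and it is worth having in hand since it also makes the $[-\delta,\delta]$-support of $\tS$, $\tT$ transparent as a product of compactly supported transforms.
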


\begin{proof}  We define entire functions $S(z)$ and $T(z)$ by
\begin{equation*}\label{ext10}
S(z) = \hh H\bigl(\delta(z - \alpha)\bigr) - \hh H\bigl(\delta(z - \beta)\bigr) - \hh K\bigl(\delta(z - \alpha)\bigr) - \hh K\bigl(\delta(z - \beta)\bigr),
\end{equation*}
and
\begin{equation*}\label{ext11}
T(z) = \hh H\bigl(\delta(z - \alpha)\bigr) - \hh H\bigl(\delta(z - \beta)\bigr) + \hh K\bigl(\delta(z - \alpha)\bigr) + \hh K\bigl(\delta(z - \beta)\bigr).
\end{equation*}
Then the inequality (\ref{ext7}) follows for all real $x$ from (\ref{ext5}) and (\ref{ext6}).  Because both $H(z)$ and $K(z)$ have
exponential type $2\pi$, it is clear that $S(z)$ and $T(z)$ have exponential type at most $2\pi\delta$.

The identity
\begin{equation}\label{ext12}
\hh H\bigl(\delta(z - \alpha)\bigr) - \hh H\bigl(\delta(z - \beta)\bigr) = \delta \int_{\alpha}^{\beta} J\bigl(\delta(x - y)\bigr)\ \dy,
\end{equation}
follows from (\ref{ext2}).  It shows that the left hand side of (\ref{ext12}) is the convolution of the two integrable functions 
$x \mapsto \chi_{\alpha, \beta}(x)$ and $x \mapsto \delta J\bigl(\delta x\bigr)$,  Hence
(\ref{ext12}) is integrable on $\R$.  As $x \mapsto K\bigl(\delta x\bigr)$ is obviously integrable, we find that both
$x \mapsto S(x)$ and $x \mapsto T(x)$ are integrable on $\R$.  Then the identities in (\ref{ext9}) follow from (\ref{ext4}).
\end{proof}

\begin{corollary}\label{corext2}  Let $\alpha < \beta$ be real numbers and let $0 < \delta$.  Then there
exists an entire functions $U(z)$ of exponential type at most $\pi\delta$, such that
\begin{equation}\label{ext13}
\chi_{\alpha, \beta}(x) \le |U(x)|^2
\end{equation}
for all real $x$, and 
\begin{equation}\label{ext14}
\int_{\R} |U(x)|^2\ \dx = \beta - \alpha + \delta^{-1}.
\end{equation}
\end{corollary}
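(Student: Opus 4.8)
The plan is to obtain $U$ by factoring the majorant $T$ supplied by Lemma \ref{lemext1}. First I would apply Lemma \ref{lemext1} to the same data $\alpha < \beta$ and $0 < \delta$, which produces a real entire function $T(z)$ of exponential type at most $2\pi\delta$, integrable on $\R$, with $\chi_{\alpha, \beta}(x) \le T(x)$ for all real $x$, Fourier transform $\tT$ continuous and supported on $[-\delta, \delta]$, and $\tT(0) = \beta - \alpha + \delta^{-1}$ by (\ref{ext9}). Since $\chi_{\alpha, \beta}$ is nonnegative, the inequality $\chi_{\alpha, \beta}(x) \le T(x)$ forces $T(x) \ge 0$ at every real $x$, and $T$ is not identically zero because $\int_{\R} T(x)\,\dx = \tT(0) > 0$. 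Moreover, because $T \in L^1(\R)$ and $\tT$ is continuous with support in $[-\delta, \delta]$, Fourier inversion represents $T$ as a bounded function on $\R$.

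The key step is to invoke the Fej\'er--Riesz factorization theorem for entire functions (the Krein--Akhiezer extension of the classical Fej\'er--Riesz lemma; see, for instance, Boas, \emph{Entire Functions}): a bounded entire function of exponential type $2\pi\delta$ that is nonnegative on the real axis and not identically zero can be written as $T(z) = U(z)\,U^{*}(z)$, where $U^{*}(z) = \overline{U(\bar z)}$ and $U$ is entire of exponential type at most $\pi\delta$. The real zeros of $T$ have even multiplicity because $T \ge 0$ on $\R$, so there is no obstruction to distributing them evenly between $U$ and $U^{*}$. For real $x$ one then has $T(x) = U(x)\overline{U(x)} = |U(x)|^2$, hence $|U(x)|^2 = T(x) \ge \chi_{\alpha, \beta}(x)$, which is (\ref{ext13}). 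Finally, $x \mapsto |U(x)|^2 = T(x)$ is integrable on $\R$ and $\int_{\R} |U(x)|^2\,\dx = \int_{\R} T(x)\,\dx = \tT(0) = \beta - \alpha + \delta^{-1}$ by (\ref{ext9}), which is (\ref{ext14}).

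The only real obstacle is the bookkeeping attached to the factorization theorem: one must check that $T$ satisfies its hypotheses (finite exponential type, nonnegativity and boundedness on $\R$, and $T \not\equiv 0$), all of which are immediate consequences of Lemma \ref{lemext1} as noted above, and that the resulting factor $U$ has exponential type exactly half that of $T$, hence at most $\pi\delta$. No new analytic input beyond Lemma \ref{lemext1} and the classical factorization result is needed.
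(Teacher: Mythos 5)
Your proposal is correct and follows essentially the same route as the paper: apply Lemma \ref{lemext1} to obtain the nonnegative majorant $T$, invoke the Fej\'er factorization theorem for entire functions of exponential type (as in Boas) to write $T(z) = U(z)U^*(z)$ with $U$ of type at most $\pi\delta$, and then read off (\ref{ext13}) and (\ref{ext14}) from (\ref{ext7}) and (\ref{ext9}). The extra bookkeeping you include (boundedness, nonvanishing, even multiplicity of real zeros) is a more explicit verification of the factorization hypotheses, but does not change the argument.
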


\begin{proof}  Let $T(z)$ be the real entire function in the statement of Lemma \ref{lemext1}.  As $x \mapsto T(x)$ takes
nonnegative values on $\R$, it follows from a theorem of Fej\'er (see \cite[pp. 124--126]{Boas}) that there exists an
entire function $U(z)$ of exponential type at most $\pi\delta$ such that
\begin{equation*}\label{15}
T(z) = U(z) U^*(z),
\end{equation*}
where $U^*(z) = \overline{U(\overline{z})}$.  In particular, we have
\begin{equation*}\label{16}
T(x) = U(x) U^*(x) = |U(x)|^2
\end{equation*}
for all real $x$.  Now (\ref{ext13}) and (\ref{ext14}) follow from (\ref{ext7}) and the identity on the right hand side of (\ref{ext9}),
respectively.
\end{proof}

For each positive integer $L$ we write $J_{L+1}(z) = (L+1)J((L+1)z)$, so that $J_{L+1}(z)$ has exponential type $2\pi (L+1)$. 
Then for all real $t$ the Fourier transforms $\widehat J$ and $\widehat J_{L+1}$ are related by the identity 
\begin{equation*}\label{ext25}
\widehat J((L+1)^{-1}t) = \widehat J_{L+1} (t).
\end{equation*}
Similar remarks apply to $K$ and $K_{L+1}$.  Using this notation we define trigonometric polynomials 
$j_L(x)$ and $k_L(x)$ by 
\begin{equation}\label{ext28}
j_L (x) = \sum_{m=-\infty}^\infty J_{L+1}(x+m) = \sum_{\ell=- L}^L \widehat J_{L+1}(\ell) e(\ell x),
\end{equation}
and 
\begin{equation}\label{ext29}
k_L (x) = \sum_{m=-\infty}^\infty K_{L+1} (x+m) = \sum_{\ell=-L}^L \widehat K_{L+1}(\ell) e(\ell x)\ .
\end{equation}
The identities on the right of (\ref{ext28}) and (\ref{ext29}) follow from the Poisson summation formula. 
We also define the periodic function $x \mapsto\psi(x)$ by 
\begin{equation*}\label{ext30}
\psi (x) = \begin{cases} x- [x] -\hh &\text{if $x \notin \Z$,}\\ 
			              0 &\text{if $x \in \Z$.}\end{cases}
\end{equation*}
The trigonometric polynomials 
\begin{equation}\label{ext31}
\psi * j_L(x) = \int_{-1/2}^{1/2} \psi (x-y) j_L (y)\,dy 
	= \sum_{\substack{\ell = -L\\ \ell \not= 0}}^L (-2\pi i\ell)^{-1} \widehat{J}_{L+1}(\ell) e(\ell x), 
\end{equation}
and $k_L(x)$, satisfy
\begin{equation}\label{ext32}
\sgn\bigl(\psi*j_L(x)\bigr) = \sgn\bigl(\psi(x)\bigr),
\end{equation}
{ }
\begin{align}\label{ext33}
\begin{split}
2\bigl|\psi (x) - \psi *j_L(x)\bigr| &\le (L+1)^{-1} k_L(x)\\
						&= (L+1)^{-2} \biggl(\frac{\sin \pi(L+1) x}{\sin \pi x}\biggr)^2,
\end{split}
\end{align}
and
\begin{equation}\label{ext34}
\bigl|\psi*j_L(x)\bigr| \le \bigl|\psi(x)\bigr|,
\end{equation}
for all $x$ in $\R/\Z$.  A proof of (\ref{ext32}), (\ref{ext33}), and (\ref{ext34}), is given in \cite[Theorem~18]{V}. 

If $\alpha < \beta < \alpha +1$ we define the normalized characteristic function of an interval in $\R/\Z$ with endpoints $\alpha$ and $\beta$ by
\begin{equation*}\label{ext40}
\varphi_{\alpha, \beta}(x) = \begin{cases}  1& \text{if $\alpha < x - n < \beta$ for some $n \in \Z$,}\\
                                                         \h & \text{if $\alpha - x \in \Z$ or if $\beta - x \in \Z$,}\\
                                                         0 & \text{otherwise.} \end{cases}
\end{equation*}
The periodic functions $\varphi_{\alpha, \beta}(x)$ and $\psi (x)$ are related by the elementary identity 
\begin{equation}\label{ext43}
\varphi_{\alpha, \beta}(x) = (\beta - \alpha) + \psi (\alpha - x) + \psi (x - \beta),
\end{equation}
which is a periodic analogue of (\ref{ext6}).  By combining (\ref{ext33}) and (\ref{ext43}) we obtain the inequality 
\begin{align}\label{ext45}
\begin{split} 
|\varphi_{\alpha, \beta}&(x) - \varphi_{\alpha, \beta} * j_L (x)|\\
&\qquad \le |\psi (\alpha - x) - \psi *j_L(\alpha - x)| + |\psi (x - \beta) - \psi *j_L(x - \beta)|\\ 
&\qquad \le (2L+2)^{-1} \{ k_L (\alpha - x) + k_L(x - \beta)\}
\end{split}
\end{align}
for all $x$ in $\R/\Z$.  Alternatively, (\ref{ext45}) follows directly from \cite[Theorem~19]{V}. 

\begin{lemma}\label{lemext3}  Let $\alpha < \beta < \alpha + 1$ and let $1 \le L$ be an integer.  Then there exist real trigonometric 
polynomials  $s(x)$ and $t(x)$ of degree at most $L$, such that
\begin{equation}\label{ext48}
s(x) \le \p_{\alpha, \beta}(x) \le t(x)
\end{equation}
at each point $x$ in $\R/\Z$.  Moreover, the Fourier coefficients $\ts(0)$ and $\ttt(0)$ are given by
\begin{equation}\label{ext49}
\ts(0) = \beta - \alpha - (L + 1)^{-1}\quad\text{and}\quad \ttt(0) = \beta - \alpha + (L+1)^{-1}.
\end{equation}
\end{lemma}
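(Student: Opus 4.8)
The plan is to construct $s$ and $t$ as the mollification $\p_{\alpha,\beta}*j_L$ corrected by translates of the nonnegative kernel $k_L$, so that the construction matches exactly the shape of the error bound (\ref{ext45}).  Explicitly, I would set
\begin{equation*}
s(x) = \p_{\alpha,\beta}*j_L(x) - (2L+2)^{-1}\bigl\{k_L(\alpha-x) + k_L(x-\beta)\bigr\},
\end{equation*}
and
\begin{equation*}
t(x) = \p_{\alpha,\beta}*j_L(x) + (2L+2)^{-1}\bigl\{k_L(\alpha-x) + k_L(x-\beta)\bigr\}.
\end{equation*}

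With these definitions the two-sided bound (\ref{ext48}) is immediate: (\ref{ext45}) says precisely that $\bigl|\p_{\alpha,\beta}(x) - \p_{\alpha,\beta}*j_L(x)\bigr| \le (2L+2)^{-1}\{k_L(\alpha-x) + k_L(x-\beta)\}$, which rearranges to $s(x) \le \p_{\alpha,\beta}(x) \le t(x)$ at every $x$ in $\R/\Z$.  Next, $j_L$ is a trigonometric polynomial of degree $L$ by (\ref{ext28}), so $\p_{\alpha,\beta}*j_L$ has its spectrum supported in $\{-L,\dots,L\}$ and is therefore a trigonometric polynomial of degree at most $L$; and $k_L$ has degree at most $L$ by (\ref{ext29}), a property preserved by the substitutions $x\mapsto \alpha-x$ and $x\mapsto x-\beta$.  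Hence $s$ and $t$ have degree at most $L$.  They are real because $\p_{\alpha,\beta}$ and $j_L$ are real (so their convolution is real), while $k_L \ge 0$.

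It remains to compute the constant terms.  The zeroth Fourier coefficient of $\p_{\alpha,\beta}*j_L$ equals the product of the zeroth coefficients of $\p_{\alpha,\beta}$ and $j_L$; the former is $\beta-\alpha$ by (\ref{ext43}) (since $\psi$ has mean zero on $\R/\Z$) and the latter is $\widehat J_{L+1}(0) = \tJ(0) = 1$ by (\ref{ext4}).  The zeroth coefficient of $x\mapsto k_L(\alpha-x)$, like that of $k_L$ itself, is $\widehat K_{L+1}(0) = \tK(0) = 1$, and the same holds for $x\mapsto k_L(x-\beta)$.  Therefore $\ttt(0) = (\beta-\alpha) + (2L+2)^{-1}(1+1) = \beta-\alpha+(L+1)^{-1}$ and $\ts(0) = \beta-\alpha-(L+1)^{-1}$, which is (\ref{ext49}).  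No step here is a genuine obstacle; the only point requiring a little care is the bookkeeping for these mean values, which rests on the normalizations $\tJ(0) = \tK(0) = 1$ recorded in (\ref{ext4}) together with the fact that reflection and translation on $\R/\Z$ leave a mean value unchanged.
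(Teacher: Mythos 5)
Your construction of $s$ and $t$ is identical to the paper's, and so are all the supporting steps: the two-sided bound from (\ref{ext45}), the degree count via (\ref{ext28}) and (\ref{ext29}), and the computation of the mean values using $\widehat{j}_L(0) = \widehat{k}_L(0) = 1$ and $\widehat{\varphi}_{\alpha,\beta}(0) = \beta-\alpha$. The proof is correct and follows essentially the same route as the paper.
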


\begin{proof}  We define trigonometric polynomials of degree $L$ by
\begin{equation*}\label{ext50}
s(x) = \varphi_{\alpha, \beta}*j_L(x) - (2L+2)^{-1}\{k_L(\alpha - x) + k_L(x - \beta)\},
\end{equation*}
and
\begin{equation*}\label{ext51}
t(x) = \varphi_{\alpha, \beta}*j_L(x) + (2L+2)^{-1}\{k_L(\alpha - x) + k_L(x - \beta)\}.
\end{equation*}
The inequality (\ref{ext48}) follows immediately from (\ref{ext45}).  The Fourier coefficient $\ts(0)$ is then
\begin{align*}\label{ext52}
\begin{split}
\ts(0) &= \widehat{\varphi}_{\alpha, \beta}(0)\widehat{j}_L(0) - (2L+2)^{-1}\{\tk(0) + \tk(0)\}\\
	 &= \beta - \alpha - (L+1)^{-1},
\end{split}
\end{align*}
and similarly for $\ttt(0)$.  This verifies the identity (\ref{ext49}).
\end{proof}

A result closely related to Lemma \ref{lemext3} is obtained in \cite[Lemma~5]{BMV}.

\begin{corollary}\label{corext4}  Let $\alpha < \beta < \alpha + 1$ and let $1 \le L$ be an integer.  Then there exists a trigonometric 
polynomials
\begin{equation}\label{ext60}
u(x) = \sum_{\ell = 0}^L \tu(\ell) e(\ell x),
\end{equation}
such that
\begin{equation}\label{ext61}
\p_{\alpha, \beta}(x) \le |u(x)|^2
\end{equation}
at each point $x$ in $\R/\Z$, and
\begin{equation}\label{ext62}
\int_{R/\Z} |u(x)|^2\ \dx = \beta - \alpha + (L + 1)^{-1}.
\end{equation} 
\end{corollary}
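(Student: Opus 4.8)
The plan is to imitate the proof of Corollary \ref{corext2}, replacing Fej\'er's factorization theorem for entire functions of exponential type by its classical analogue for trigonometric polynomials, the Fej\'er--Riesz theorem. First I would invoke Lemma \ref{lemext3} to produce the real trigonometric polynomial $t(x)$ of degree at most $L$ satisfying $\p_{\alpha, \beta}(x) \le t(x)$ for all $x$ in $\R/\Z$ and with $\ttt(0) = \beta - \alpha + (L+1)^{-1}$. Since $\p_{\alpha, \beta}$ takes only nonnegative values, the majorization inequality forces $t(x) \ge 0$ for every real $x$.

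Next I would apply the Fej\'er--Riesz theorem: a nonnegative real trigonometric polynomial of degree $L$ admits a representation $t(x) = |u(x)|^2$ with $u(x) = \sum_{\ell=0}^L \tu(\ell) e(\ell x)$ a polynomial in $e(x)$ of degree at most $L$. Concretely, one writes $t(x) = e(-Lx) P\bigl(e(x)\bigr)$ where $P$ is a polynomial of degree $2L$ with real coefficients; its zeros off the unit circle occur in conjugate-reciprocal pairs and its zeros on the unit circle have even multiplicity, so collecting one representative from each pair together with half of each circle zero assembles the desired factor $u$. This is the periodic counterpart of the factorization cited from \cite{Boas} in the proof of Corollary \ref{corext2}.

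The conclusion then follows with no further work: from $\p_{\alpha, \beta}(x) \le t(x) = |u(x)|^2$ we obtain (\ref{ext61}), and by integrating term by term (equivalently, by Parseval) we get $\int_{\R/\Z} |u(x)|^2\,\dx = \int_{\R/\Z} t(x)\,\dx = \ttt(0) = \beta - \alpha + (L+1)^{-1}$, which is (\ref{ext62}). The only step demanding any attention --- and it is a mild one --- is confirming that $t$ is genuinely nonnegative on all of $\R/\Z$ so that the Fej\'er--Riesz factorization is applicable; this is immediate from $t \ge \p_{\alpha, \beta} \ge 0$, and everything else is routine bookkeeping with Fourier coefficients.
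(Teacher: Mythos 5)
Your proposal is correct and follows exactly the paper's own argument: take the majorant $t(x)$ from Lemma \ref{lemext3}, observe it is nonnegative, apply Fej\'er's factorization theorem for trigonometric polynomials to write $t = |u|^2$, and read off (\ref{ext61}) and (\ref{ext62}). The extra sketch you give of how the Fej\'er--Riesz factorization is constructed is a harmless elaboration of the step the paper simply cites from \cite{Fejer1916}.
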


\begin{proof}  Let $t(x)$ be the trigonometric polynomial of degree at most $L$ that occurs in the statement of Lemma \ref{lemext3}.
Because $x \mapsto t(x)$ takes nonnegative values, Fejer's theorem for trigonometric polynomials (see \cite{Fejer1916})
establishes the existence of a trigonometric polynomial $u(x)$ of the form (\ref{ext60}), such that
\begin{equation*}\label{ext65}
t(x) = |u(x)|^2
\end{equation*}
at each point $x$ in $\R/\Z$.  The inequality (\ref{ext61}) and the identity (\ref{ext62}) follow immediately from the lemma.
\end{proof}

Because $x \mapsto \psi*j_L(x)$ is an odd, trigonometric polynomial, we have $\psi*j_L(0) = 0$ and
$\psi*j_L\bigl(\h\bigr) = 0$.  Therefore the function $x \mapsto \tau_{L-1}(x)$ defined by
\begin{equation}\label{tp9}
\tau_{L-1}(x) = \biggl(\frac{-2\pi}{\sin 2\pi x}\biggr) \psi*j_L(x) = \frac{\psi*j_L(x)}{\psi*j_1(x)}
\end{equation}
is a trigonometric polynomial of degree $L-1$.  Then (\ref{ext32}) implies that
\begin{equation*}\label{tp10}
\sgn\bigl(\psi*j_L(x)\bigr) = \sgn\bigl(\psi(x)\bigr) = \sgn\bigl(\psi*j_1(x)\bigr).
\end{equation*}
It follows that 
\begin{equation}\label{tp11}
0 < \tau_{L-1}(x)
\end{equation}
for all $x$ in $\R/\Z$ such that $x \not= 0$ and $x\not= \h$.  We also have
\begin{equation*}\label{tp12}
-\Bigl(\frac{{\rm d}}{\dx}\Bigr) \psi*j_L(x) \Bigl|_{x = 0} = L,
\end{equation*}
which easily leads to the identity
\begin{equation}\label{tp13}
\tau_{L-1}(0) = L.
\end{equation}

The Fourier coefficients $\ttau_{L-1}(\ell)$ can be determined explicitly by writing
\begin{equation*}\label{tp14}
\psi*j_L(x) = - \sum_{\ell=1}^L \biggl(\frac{1}{\pi \ell}\biggr) \tJ\biggl(\frac{\ell}{L+1}\biggr) \sin 2\pi \ell x.
\end{equation*}
Then we have
\begin{align}\label{tp15}
\begin{split}
\tau_{L-1}(x) &= \sum_{\ell=1}^L \biggl(\frac{2}{\ell}\biggr) \tJ\biggl(\frac{\ell}{L+1}\biggr)\bigg\{\frac{\sin 2\pi \ell x}{\sin 2\pi x}\bigg\}\\
   &= \sum_{\ell=0}^{\infty} \biggl(\frac{2}{2\ell+1}\biggr)\tJ\biggl(\frac{2\ell+1}{L+1}\biggr)\\
   &\qquad + \sum_{\ell=1}^{\infty} \biggl(\frac{4}{2\ell+1}\biggr)\tJ\biggl(\frac{2\ell+1}{L+1}\biggr) \sum_{m=1}^\ell \cos 2\pi(2m)x \\
   &\qquad\qquad + \sum_{\ell=1}^{\infty} \biggl(\frac{2}{\ell}\biggr)\tJ\biggl(\frac{2\ell}{L+1}\biggr) \sum_{m=1}^\ell \cos 2\pi(2m-1)x,
\end{split}
\end{align}
where the sums on the index $\ell$ contain only finitely many nonzero terms (because $t\mapsto \tJ(t)$ is supported on 
$[-1, 1]$.)  After further reorganization we arrive at the identity
\begin{equation}\label{tp16}
\begin{split}
\tau_{L-1}(x) &= \sum_{\ell=0}^{\infty} \biggl(\frac{2}{2\ell+1}\biggr)\tJ\biggl(\frac{2\ell+1}{L+1}\biggr)\\
   &\qquad + \sum_{m=1}^{L-1} 
	\Bigg\{\sum_{\ell=0}^{\infty} \biggl(\frac{4}{2\ell+m+1}\biggr)\tJ\biggl(\frac{2\ell+m+1}{L+1}\biggr)\Bigg\} \cos 2\pi m x.
\end{split}
\end{equation}
As
\begin{equation*}\label{tp16.5}
\tau_{L-1}(x) = \ttau_{L-1}(0) + 2 \sum_{m=1}^{L-1} \ttau_{L-1}(m) \cos 2\pi m x,
\end{equation*}
it is clear from (\ref{tp16}) that 
\begin{equation}\label{tp17}
0 < \ttau_{L-1}(m)\quad\text{for}\quad |m| \le L-1.
\end{equation}
Therefore we conclude from the definition (\ref{tp9}) that
\begin{equation}\label{tp18}
\sup\big\{\tau_{L-1}(x): x\in\R/\Z\big\} = \tau_{L-1}(0) = L.
\end{equation}

We collect together useful properties of $\tau_{L-1}(x)$ in the following lemma.

\begin{lemma}\label{lemtp5}  For each positive integer $L$ the trigonometric polynomial
\begin{equation*}\label{tp19}
\tau_{L-1}(x) = \biggl(\frac{-2\pi}{\sin 2\pi x}\biggr) \psi*j_L(x) = \frac{\psi*j_L(x)}{\psi*j_1(x)}
\end{equation*}
has degree $L - 1$, and takes nonnegative values on $\R/\Z$.  The inequality
\begin{equation}\label{tp20}
\tau_{L-1}(x) \le \min\big\{L, \bigl(2\|x\|\bigr)^{-1}\big\}, 
\end{equation} 
holds for all $x$ in $\R/\Z$, and there is equality in the inequality {\rm (\ref{tp20})} at $x = 0$.  The Fourier coefficients 
$\ttau_{L-1}(\ell)$ are positive for $|\ell| \le L-1$, and
\begin{equation}\label{tp21}
\log (L+1) \le \ttau_{L-1}(0) \le 1 + \log L.
\end{equation}
\end{lemma}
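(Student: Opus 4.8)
Almost everything in the lemma has already been assembled in the computations $(\ref{tp9})$–$(\ref{tp18})$, and I would organize the proof so as to reduce it to two genuine estimates. First the easy assertions: that $\tau_{L-1}$ is a trigonometric polynomial of degree $L-1$ is the remark following $(\ref{tp9})$; that its Fourier coefficients $\ttau_{L-1}(\ell)$ are positive for $|\ell|\le L-1$ is $(\ref{tp17})$; and nonnegativity on $\R/\Z$ follows from $(\ref{tp11})$ — which gives $\tau_{L-1}(x)>0$ off $\{0,\h\}$ — together with the continuity of $\tau_{L-1}$, which forces $\tau_{L-1}(\h)\ge 0$ (and $\tau_{L-1}(0)=L>0$ by $(\ref{tp13})$). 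In $(\ref{tp20})$ the bound $\tau_{L-1}(x)\le L$, with equality at $x=0$, is $(\ref{tp18})$ and $(\ref{tp13})$; since $\min\{L,(2\|0\|)^{-1}\}=L$ this also yields the claimed equality in $(\ref{tp20})$ at $x=0$.

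The first substantive point is the bound $\tau_{L-1}(x)\le(2\|x\|)^{-1}$. Here I would use $\psi*j_1(x)=-(2\pi)^{-1}\sin 2\pi x$, so that for $x\notin\{0,\h\}$ equations $(\ref{ext32})$ and $(\ref{ext34})$ give
\[
0\le\tau_{L-1}(x)=\frac{|\psi*j_L(x)|}{|\psi*j_1(x)|}\le\frac{|\psi(x)|}{(2\pi)^{-1}|\sin 2\pi x|}=\frac{2\pi|\psi(x)|}{|\sin 2\pi x|},
\]
an inequality that extends to $x=\h$ by continuity. Writing $t=\|x\|$ we have $|\psi(x)|=\h-t$ and $|\sin 2\pi x|=\sin 2\pi t$, so the claim reduces to the elementary inequality $4\pi t(\h-t)\le\sin 2\pi t$ for $0<t\le\h$, i.e. $\sin u\ge\pi^{-1}u(\pi-u)$ on $[0,\pi]$, which I would verify by analysing the second derivative of $u\mapsto\pi\sin u-u(\pi-u)$.

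It remains to prove the two-sided estimate $(\ref{tp21})$ on $\ttau_{L-1}(0)=\int_{\R/\Z}\tau_{L-1}$. The upper bound is immediate by integrating $(\ref{tp20})$:
\[
\ttau_{L-1}(0)\le\int_{\R/\Z}\min\{L,(2\|x\|)^{-1}\}\,\dx=2\int_0^{1/(2L)}L\,\dx+2\int_{1/(2L)}^{1/2}\frac{\dx}{2x}=1+\log L .
\]
For the lower bound I would start from the explicit formula $(\ref{tp16})$, which gives
\[
\ttau_{L-1}(0)=\sum_{\substack{\ell\ge 1\\\ell\ \mathrm{odd}}}\frac{2}{\ell}\,\tJ\!\left(\frac{\ell}{L+1}\right),
\]
and invoke the elementary facts — read off from the formula $(\ref{ext4})$ for $\tJ$ — that $\tJ(t)\ge\tK(t)=1-|t|$ for $|t|\le\h$ and $\tJ(t)\ge 0$ for $|t|\le 1$. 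Discarding the terms with $\ell>(L+1)/2$ and applying the first bound on the rest reduces the problem to
\[
2\sum_{\substack{1\le\ell\le(L+1)/2\\\ell\ \mathrm{odd}}}\frac{1}{\ell}\Bigl(1-\frac{\ell}{L+1}\Bigr)\ \ge\ \log(L+1).
\]

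I expect this last inequality to be the only delicate step. Standard estimates for partial sums of $\sum 1/\ell$ over odd $\ell$ show that its left‑hand side equals $\log(L+1)+\gamma-\h+o(1)$, so it holds for all $L$ beyond an explicit threshold, the remaining small values of $L$ being checked directly; the care needed is exactly that the margin $\gamma-\h\approx0.077$ is small, so the error terms in the harmonic‑sum estimate must be tracked carefully. A cleaner route, if one exists, should come from recognising $\tau_{L-1}$ as the extremal minorant of $\min\{L,(2\|x\|)^{-1}\}$ among trigonometric polynomials of degree $\le L-1$ and comparing its mean with that of an explicitly constructed minorant of mean $\log(L+1)$.
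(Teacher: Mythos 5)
Your handling of everything except the lower bound $\log(L+1)\le\ttau_{L-1}(0)$ is correct and matches the paper closely: the elementary parts of the statement come from $(\ref{tp11})$, $(\ref{tp13})$, $(\ref{tp16})$--$(\ref{tp18})$; the bound $\tau_{L-1}(x)\le(2\|x\|)^{-1}$ from $(\ref{ext34})$ combined with the elementary estimate $4\pi t(\h-t)\le\sin 2\pi t$ for $0<t\le\h$ is exactly the paper's $(\ref{tp23})$; and integrating $(\ref{tp20})$ gives the upper bound $\ttau_{L-1}(0)\le 1+\log L$.

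For the lower bound you take a genuinely different route from the paper, and it is where a real gap remains. The paper does \emph{not} estimate the series $(\ref{tp16})$ directly. Instead it exploits the exact sampling identity for a trigonometric polynomial of degree $L-1$,
\[
(L+1)\,\ttau_{L-1}(0)=\sum_{\ell=0}^{L}\tau_{L-1}\bigl(\tfrac{\ell}{L+1}\bigr)=L+\sum_{\ell=1}^{L} w\bigl(\tfrac{\ell}{L+1}\bigr),
\]
where $w(x)=-2\pi\psi(x)/\sin 2\pi x$ (the equality $\tau_{L-1}(\ell/(L+1))=w(\ell/(L+1))$ comes from the interpolation property $(\ref{tp25})$ of $\psi*j_L$), and then bounds the sum from below via convexity of $w$ and comparison with $\pi\int\csc$, assuming $L\ge 3$ and checking $L=1,2$ directly. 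The identity is exact, so the resulting margin is comfortable: the paper's $(\ref{tp35})$ gives $\ttau_{L-1}(0)\ge\log(L+1)+(\tfrac{3}{2}-2\log 2)\approx\log(L+1)+0.114$.

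Your plan, by contrast, discards the terms with $\ell>(L+1)/2$ and replaces $\tJ$ by $\tK$ on the rest, and this loses too much. A concrete counterexample: at $L=4$ the only surviving term is $\ell=1$, and your reduced inequality reads
\[
\tfrac{2}{1}\bigl(1-\tfrac{1}{5}\bigr)=\tfrac{8}{5}=1.6\ \not\ge\ \log 5\approx 1.6094 ,
\]
so the reduction is strictly false there even though the true value $\ttau_3(0)\approx 2.02$ comfortably satisfies the lemma. This is not merely a matter of ``checking small cases directly'' in the sense you describe: the reduced inequality is genuinely weaker than what you need, and the $\gamma-\h\approx0.077$ asymptotic margin is eaten up by the oscillating error coming from the parity restriction on $\ell$ (the margin dips below $0.03$ again near $L=8$). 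Making your approach rigorous would require either a sharper pointwise lower bound on $\tJ$ than $\tK$, or a careful two-term expansion of the odd harmonic sum with explicit error bounds, together with direct evaluation of $\ttau_{L-1}(0)$ from $(\ref{tp16})$ for all $L$ below whatever threshold that yields. The paper's sampling-plus-convexity argument avoids all of this and is the cleaner route.

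One small point in your favour: the inequality $\tJ(t)\ge 1-|t|$ for $|t|\le\h$ that you read off from $(\ref{ext4})$ is indeed true (it reduces to $\pi t(1-t)\cot\pi t\ge 1-2t$ on $(0,\h]$, with equality at both endpoints), so that step of your outline is sound; it is the discarding of the terms with $\ell>(L+1)/2$ that is too wasteful.
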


\begin{proof}  The inequality (\ref{tp11}) implies that $\tau_{L-1}(x)$ takes nonnegative values  on $\R/\Z$, and (\ref{tp16})
verifies that its Fourier coefficients are positive for $|\ell| \le L-1$.  From the inequality (\ref{ext34}) and the definition (\ref{tp9}),
we conclude that
\begin{equation}\label{tp23}
2 \|x\| \tau_{L-1}(x) \le \sup\Big\{\frac{4\pi y(\h - y)}{\sin 2\pi y}: 0 < y < \hh\Big\} = 1.
\end{equation}
By combining (\ref{tp18}) and (\ref{tp23}) we obtain the basic inequality (\ref{tp20}), and we also get the case of equality in
(\ref{tp20}) at $x = 0$.  

To establish the inequality on the right of (\ref{tp21}), we integrate both sides of (\ref{tp20}) over $\R/\Z$.  

The inequality on the left of (\ref{tp21}) is trivial to check in case $L = 1$ and $L=2$.  Therefore in what follows we will
assume that $3 \le L$.  It will be useful to define the function $w: \R/\Z \rightarrow [0, \infty)$ by
$w(0) = 0$, $w\bigl(\h\bigr) = 1$, and
\begin{equation}\label{tp24}
w(x) = \frac{-2\pi \psi(x)}{\sin 2\pi x},\quad\text{if $x \not= 0$ and $x \not= \hh$.}
\end{equation}
It follows easily that $x \mapsto w(x)$ is continuous and positive on $\R/\Z \setminus \{0\}$.  The inequality
(\ref{ext33}) leads to the identity
\begin{equation}\label{tp25}
\psi\bigl(\tfrac{\ell}{L+1}\bigr) = \psi*j_L\bigl(\tfrac{\ell}{L+1}\bigr)
\end{equation}
for $\ell = 1, 2, \dots , L$.  Then (\ref{tp24}) and (\ref{tp25}) imply that
\begin{equation}\label{tp26}
w\bigl(\tfrac{\ell}{L+1}\bigr) = \tau_{L-1}\bigl(\tfrac{\ell}{L+1}\bigr)
\end{equation}
for $\ell = 1, 2, \dots , L$.  Because $\tau_{L-1}(x)$ is a trigonometric polynomial of degree $L-1$, we have
\begin{align}\label{tp28}
\begin{split}
(L + 1) \ttau_{L-1}(0) &= \sum_{m = 1-L}^{L-1} \ttau_{L-1}(m) \sum_{\ell = 0}^L e\bigl(\tfrac{\ell m}{L+1}\bigr)\\
	                            &= \sum_{\ell = 0}^L \tau_{L-1}\bigl(\tfrac{\ell}{L+1}\bigr)\\
	                            &= \tau_{L-1}(0) + \sum_{\ell = 1}^L \tau_{L-1}\bigl(\tfrac{\ell}{L+1}\bigr).
\end{split}
\end{align}
We now combine (\ref{tp13}), (\ref{tp26}), and (\ref{tp28}), to obtain the identity
\begin{equation}\label{tp30}
(L + 1) \ttau_{L-1}(0) = L + \sum_{\ell = 1}^L w\bigl(\tfrac{\ell}{L+1}\bigr).
\end{equation}

On the open interval $(0, 1)$ the continuous function $x \mapsto w(x)$ can be expressed as a power series in $(x - \h)^2$
with positive coefficients.  Hence the function is convex and satisfies $w(x) = w(1 - x)$.  Then it follows from Simpson's rule that
\begin{align}\label{tp31}
\begin{split}
\sum_{\ell = 1}^L w\bigl(\tfrac{\ell}{L+1}\bigr)
	&\ge w\bigl(\tfrac{1}{L+1}\bigr) + (L+1) \int_{\tfrac{1}{L+1}}^{\tfrac{L}{L+1}} w(x)\ \dx\\
	&= w\bigl(\tfrac{1}{L+1}\bigr) + (L+1) \int_{\hh - \tfrac{1}{L+1}}^{\hh + \tfrac{1}{L+1}} w(x)\ \dx\\
	&\qquad\qquad + (L+1) \int_{\tfrac{1}{L+1}}^{\hh - \tfrac{1}{L+1}} \bigl(w(x) + w\bigl(\hh + x\bigr)\bigr)\ \dx.
\end{split}
\end{align}
From (\ref{tp24}) we get
\begin{equation}\label{tp32}
w\bigl(\tfrac{1}{L+1}\bigr) \ge (L+1)\bigl(\hh - \tfrac{1}{L+1}\bigr) = \hh(L+1) - 1.
\end{equation}
The minimum of the even, convex function $x \mapsto w(x)$ on $(0, 1)$ occurs at $w(\h) = 1$, and it follows that
\begin{equation}\label{tp33}
 (L+1) \int_{\hh - \tfrac{1}{L+1}}^{\hh + \tfrac{1}{L+1}} w(x)\ \dx \ge 2.
\end{equation}
For the remaining integral on the right of (\ref{tp31}) we find that
\begin{align}\label{tp34}
\begin{split}
(L+1) \int_{\tfrac{1}{L+1}}^{\hh - \tfrac{1}{L+1}}&\bigl(w(x) + w\bigl(\hh + x\bigr)\bigr)\ \dx\\
	&= \pi(L+1) \int_{\tfrac{1}{L+1}}^{\hh - \tfrac{1}{L+1}} |\csc 2\pi x|\ \dx\\
	&= \pi (L+1) \int_{\tfrac{2}{L+1}}^{\hh} |\csc \pi x|\ \dx\\
	&\ge (L+1) \int_{\tfrac{2}{L+1}}^{\hh} x^{-1}\ \dx\\
	&= (L+1) \log(L+1) - (L+1)(2 \log 2).
\end{split}
\end{align}
Finally, we combine the estimates (\ref{tp30}), (\ref{tp31}), (\ref{tp32}), (\ref{tp33}), and (\ref{tp34}), to obtain the lower bound
\begin{equation}\label{tp35}
(L+1) \ttau_{L-1}(0) \ge (L+1)\bigl(\tfrac{3}{2} - 2\log 2\bigr) + (L+1) \log (L+1).
\end{equation}
Clearly (\ref{tp35}) verifies the inequality on the left of (\ref{tp21}).
\end{proof}

\section{Large sieve inequalities}

For each positive integer $M$, we define $P_M:(\R/\Z)^M \rightarrow [0, 1]$ by
\begin{equation}\label{ls1}
P_M(\bx) = \prod_{m=1}^M \|x_m\|.
\end{equation}
And for each positive integer $N$, we define $Q_N: \Z^N \rightarrow \{1, 2, \dots \}$ by
\begin{equation}\label{ls2}
Q_N(\bwy) = \prod_{n=1}^N \bigl(|y_n| + 1\bigr).
\end{equation}
Let $K_1, K_2, \dots , K_N$, and $L_1, L_2, \dots , L_M$, be two sets of nonnegative integers.  We define corresponding subsets
$\K \subseteq \Z^N$ and $\lL \subseteq \Z^M$ by
\begin{equation}\label{ls3}
\K = \{\bxi \in \Z^N: 0 \le \xi_n \le K_n\ \text{for}\ n = 1, 2, \dots , N\}
\end{equation}
and
\begin{equation}\label{ls4}
\lL = \{\ceta \in \Z^N: 0 \le \eta_m \le L_m\ \text{for}\ m = 1, 2, \dots , M\}.
\end{equation}
It follows that the difference set $\K - \K$ is 
\begin{equation*}\label{ls5}
\K - \K = \{\bxi \in \Z^N : |\xi_n| \le K_n\ \text{for}\ n = 1, 2, \dots , N\},
\end{equation*}
and similarly for $\lL - \lL$.  We write $|\K|$ and $|\lL|$ for the cardinality of $\K$ and $\lL$, respectively.

Using $A$ in $G_1$, and the subsets $\K \subseteq \Z^N$ and $\lL \subseteq \Z^M$, we define
\begin{equation}\label{ls7}
\gamma(A, \K) = \min \big\{P_M(A\bxi) Q_N(\bxi) : \bxi \in \K - \K,\ \text{and}\ \bxi \not= \bzero\big\}.
\end{equation}
It follows from the inequalities (\ref{sba1}),
(\ref{sba2}), and (\ref{sba3}), in the statement of Dirichlet's theorem that $0 \le \gamma(A, \K) \le 1$.

\begin{lemma}\label{lemls1}  If $A$ is a point in $G_1$ such that $0 < \gamma(A, \K)$, then for all complex valued functions
$\ceta \mapsto b(\ceta)$ defined on $\lL$, we have
\begin{equation}\label{ls22}
\sum_{\bxi \in \K}~\biggl|\sum_{\ceta \in \lL} b(\ceta) e\bigl(\ceta^T A \bxi\bigr)\biggr|^2 
	\le \Bigl(\gamma(A, \K)^{-\frac1M} |\K|^{\frac1M} + |\lL|^{\frac1M}\Bigr)^M \sum_{\ceta \in \lL} |b(\ceta)|^2.
\end{equation}
\end{lemma}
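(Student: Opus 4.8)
The plan is to prove \eqref{ls22} in its dual (adjoint) formulation and then estimate the resulting quadratic form by comparison with a product majorant built from the extremal functions of Section 3.

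First I would pass to the adjoint inequality. Writing $T\colon \ell^2(\lL)\to\ell^2(\K)$ for the operator $(Tb)(\bxi)=\sum_{\ceta\in\lL}b(\ceta)e(\ceta^T A\bxi)$, inequality \eqref{ls22} says exactly $\|Tb\|^2\le\Delta\,\|b\|^2$ with $\Delta=\bigl(\gamma(A,\K)^{-1/M}|\K|^{1/M}+|\lL|^{1/M}\bigr)^M$, and since $\|T\|=\|T^*\|$ it suffices to prove the bound $\sum_{\ceta\in\lL}\bigl|\sum_{\bxi\in\K}c(\bxi)e(-\ceta^T A\bxi)\bigr|^2\le\Delta\sum_{\bxi\in\K}|c(\bxi)|^2$ for every $c\colon\K\to\C$. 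Expanding the square and summing over $\ceta\in\lL=\prod_m\{0,\dots,L_m\}$ turns the left side into $\sum_{\bxi,\bxi'\in\K}c(\bxi)\overline{c(\bxi')}\,\Phi_0\bigl(A(\bxi-\bxi')\bigr)$, where $\Phi_0(\bt)=\prod_{m=1}^M\overline{D_{L_m}(t_m)}$ and $D_L(x)=\sum_{n=0}^L e(nx)$. For any absolutely summable $\Phi$ on $(\R/\Z)^M$ one has the identity $\sum_{\bxi,\bxi'}c(\bxi)\overline{c(\bxi')}\Phi(A(\bxi-\bxi'))=\sum_{\bell\in\Z^M}\widehat\Phi(\bell)\,|\widehat c(\bell)|^2$ with $\widehat c(\bell)=\sum_{\bxi\in\K}c(\bxi)e(\bell^T A\bxi)$; since $\widehat{\Phi_0}(\bell)=1$ for $\bell\in-\lL:=\prod_m\{-L_m,\dots,0\}$ and $\widehat{\Phi_0}(\bell)=0$ otherwise, the left side is $\sum_{\bell\in-\lL}|\widehat c(\bell)|^2\ge 0$.

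The crux is to construct a product function $\Psi=\prod_{m=1}^M\Psi_m$ on $(\R/\Z)^M$ with two properties: (i) $\widehat{\Psi_m}(\ell)\ge 1$ for $-L_m\le\ell\le 0$ and $\widehat{\Psi_m}(\ell)\ge 0$ for every $\ell\in\Z$, so that $\widehat\Psi(\bell)\ge\widehat{\Phi_0}(\bell)$ for all $\bell$; and (ii) $\Psi_m$ is supported on $\{x\in\R/\Z:\|x\|\le\delta_m\}$ with $\prod_{m=1}^M\delta_m=\gamma(A,\K)/|\K|$. Granting this, (i) and the identity above give
\[
\sum_{\ceta\in\lL}\Bigl|\sum_{\bxi\in\K}c(\bxi)e(-\ceta^T A\bxi)\Bigr|^2=\sum_{\bell}\widehat{\Phi_0}(\bell)|\widehat c(\bell)|^2\le\sum_{\bell}\widehat\Psi(\bell)|\widehat c(\bell)|^2=\sum_{\bxi,\bxi'\in\K}c(\bxi)\overline{c(\bxi')}\,\Psi\bigl(A(\bxi-\bxi')\bigr).
\]
For $\bxi\ne\bxi'$ put $\boldsymbol{\zeta}=\bxi-\bxi'\in(\K-\K)\setminus\{\bzero\}$; then $|\zeta_n|\le K_n$ forces $Q_N(\boldsymbol{\zeta})\le|\K|$, so by the definition \eqref{ls7} of $\gamma(A,\K)$ we get $\prod_m\|(A\boldsymbol{\zeta})_m\|=P_M(A\boldsymbol{\zeta})\ge\gamma(A,\K)/Q_N(\boldsymbol{\zeta})\ge\gamma(A,\K)/|\K|=\prod_m\delta_m$. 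Hence it is impossible that $\|(A\boldsymbol{\zeta})_m\|<\delta_m$ for every $m$, and in the remaining borderline case $\|(A\boldsymbol{\zeta})_m\|=\delta_m$ for all $m$ one uses that $\Psi_m(\pm\delta_m)=0$ (because $\widehat{\Psi_m}$ is continuous and compactly supported, $\Psi_m$ is continuous and vanishes at the edge of its support). Either way $\Psi(A\boldsymbol{\zeta})=0$, so only the diagonal $\bxi=\bxi'$ contributes and the left side of \eqref{ls22} is at most $\Psi(\bzero)\sum_{\bxi}|c(\bxi)|^2=\bigl(\prod_m\Psi_m(0)\bigr)\|c\|^2$.

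It remains to produce $\Psi_m$ with $\Psi_m(0)=L_m+1+\delta_m^{-1}$, and here I would invoke Corollary \ref{corext2}, which is precisely the dual of this requirement. Applying it to the interval $\bigl(-L_m-\tfrac12,\tfrac12\bigr)$ with exponential-type parameter $\delta_m$ produces an entire function $U_m$ of exponential type at most $\pi\delta_m$ with $\chi_{-L_m-1/2,\,1/2}\le|U_m|^2$ and $\int_\R|U_m|^2=(L_m+1)+\delta_m^{-1}$. Let $\psi_m$ be the inverse Fourier transform of $|U_m|^2$; since $|U_m|^2$ has exponential type at most $2\pi\delta_m$, $\psi_m$ is supported in $[-\delta_m,\delta_m]$, with $\psi_m(0)=\int_\R|U_m|^2$ and $\widehat{\psi_m}=|U_m|^2\ge\chi_{-L_m-1/2,\,1/2}$, so $\widehat{\psi_m}\ge1$ on $\{-L_m,\dots,0\}$ and $\ge 0$ everywhere. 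Periodizing, $\Psi_m(x)=\sum_{k\in\Z}\psi_m(x+k)$ has the required support and $\Psi_m(0)=\psi_m(0)=L_m+1+\delta_m^{-1}$, while Poisson summation gives $\widehat{\Psi_m}(\ell)=\widehat{\psi_m}(\ell)=|U_m(\ell)|^2$, which is (i). Finally one chooses the widths by $\delta_m^{-1}=\lambda(L_m+1)$ with $\lambda=\bigl(|\K|/(\gamma(A,\K)|\lL|)\bigr)^{1/M}$, so that $\prod_m\delta_m=\gamma(A,\K)/|\K|$ as needed in (ii), and then
\[
\prod_{m=1}^M\Psi_m(0)=\prod_{m=1}^M(L_m+1)(1+\lambda)=|\lL|(1+\lambda)^M=\bigl(|\lL|^{1/M}+(|\K|/\gamma(A,\K))^{1/M}\bigr)^M=\Delta .
\]
I expect the main obstacle to be the exact bookkeeping that makes $\Delta$ come out with no stray constant — in particular pinning the factor $L_m+1$ (which dictates the interval $(-L_m-\tfrac12,\tfrac12)$ in Corollary \ref{corext2}) and handling the boundary set where $\prod_m\|(A\boldsymbol{\zeta})_m\|$ equals $\gamma(A,\K)/|\K|$ exactly; a minor separate point is the degenerate range in which the optimal $\delta_m$ would exceed $\tfrac12$, where the periodization step must be modified or the inequality verified by a direct crude estimate.
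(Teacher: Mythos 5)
Your proposal is a correct and genuinely different organization of the argument, and worth comparing to the paper's route. The paper's proof is a direct, Montgomery-style large sieve: it builds the same entire functions $U_m$ from Corollary~\ref{corext2} applied on $[0,L_m]$, forms $\widetilde B(\bx)=\sum b(\ceta)U(\ceta)^{-1}e(\ceta^T\bx)$, represents $B$ as a convolution $B(\bx)=\int \tU(\bwy)\widetilde B(\bx+\bwy)\,\dbwy$, applies Cauchy--Schwarz, and then uses the key geometric fact that the translates $A\bxi+E$ of the box $E=\{|y_m|\le\hh\delta_m\}$ are pairwise disjoint modulo $\Z^M$ (precisely because no nonzero $\boldsymbol{\zeta}\in\K-\K$ satisfies $\|(A\boldsymbol{\zeta})_m\|\le\delta_m$ for every $m$). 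Your argument dualizes first, expands the quadratic form, replaces the Dirichlet-kernel product $\Phi_0$ by a Fourier-side majorant $\Psi$ with small spatial support, and uses the same disjointness fact, but now phrased as $\Psi(A\boldsymbol{\zeta})=0$ off the diagonal. These are two faces of the same classical mechanism (and Corollary~\ref{corext2} plays the identical role in both: producing Fej\'er-factorized Beurling--Selberg majorants). Your Selberg-style version has the aesthetic advantage of giving the constant $\Delta$ directly in a closed form without the $\theta\to1$ limit that the paper takes, and it makes the ``diagonal only'' structure transparent. The paper's version has a small but real technical advantage, discussed next.

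The gap you flag is genuine and is where the two approaches part company. Because the paper works with the box $E$ of \emph{half}-widths $\delta_m/2$, its disjointness argument needs only $\delta_m<1$; the choice $\delta_m^{-1}=X(L_m+1)+1$ guarantees this, and the ``$+1$'' costs nothing because the optimization only uses the crude bound $X^M\le\prod(X+(L_m+1)^{-1})$. Your periodization, by contrast, needs $\delta_m\le\hh$ so that the translates of $\supp\psi_m\subseteq[-\delta_m,\delta_m]$ do not overlap on $\R/\Z$ and the vanishing at $\pm\delta_m$ can be invoked. With the exact choice $\delta_m^{-1}=\lambda(L_m+1)$, $\lambda=\bigl(|\K|/(\gamma(A,\K)|\lL|)\bigr)^{1/M}$, this fails whenever $\lambda(L_m+1)<2$ --- e.g.\ whenever some $L_m=0$ and $|\K|<2^M\gamma(A,\K)|\lL|$ --- and then $\Psi_m$ need not vanish anywhere on $\R/\Z$, so the off-diagonal terms do not drop out. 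To close this, either import the paper's safer choice $\delta_m^{-1}=X(L_m+1)+1$ (at the cost of a $\theta\to1$ limit), or give a separate elementary bound in the degenerate regime; the proposal as written leaves this to the reader, so it is incomplete in exactly the place you anticipated.

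Two small bookkeeping points you should tighten if you write this out. First, $\psi_m=\check T_m$ is Hermitian rather than real, so $\Psi$ is Hermitian and the quadratic form $\sum c(\bxi)\overline{c(\bxi')}\Psi(A(\bxi-\bxi'))$ is real but the pointwise majorization statement should be read as a statement about Fourier coefficients ($\widehat\Psi(\bell)\ge\widehat{\Phi_0}(\bell)\ge0$), not a pointwise comparison of functions. Second, the statement ``$\Psi_m$ vanishes at the edge of its support'' needs the one-line justification you gave (continuity of $\psi_m=\check T_m$ with $T_m\in L^1$, plus $\psi_m\equiv0$ on $\{|t|>\delta_m\}$), and it needs $\delta_m\le\hh$ so that $\pm\delta_m$ is actually the nearest integer representative; that is the same constraint as above, so the two issues collapse into one.
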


\begin{proof}  Let $\delta_1, \delta_2, \dots , \delta_M$, be real numbers such that
$0 < \delta_m < 1$ and
\begin{equation}\label{ls23}
\biggl(\prod_{m=1}^M \delta_m\biggr)\biggl(\prod_{n=1}^N (K_n + 1)\biggr) = \theta \gamma(A, \K),
\end{equation}
where $0 < \theta < 1$.  By Corollary \ref{corext2}, for each $m = 1, 2, \dots , M$, there exists an entire function $U_m(z)$ of 
exponential type at most $\pi\delta_m$ such that
\begin{equation}\label{ls24}
1 \le |U_m(x)|^2
\end{equation}
for all real $x$ satisfying $0 \le x \le L_m$, and
\begin{equation}\label{ls25}
\int_{\R} |U_m(x)|^2\ \dx = L_m + \delta_m^{-1}.
\end{equation}
It follows that the function
\begin{equation*}\label{ls26}
U(\bx) = \prod_{m=1}^M U_m(x_m)
\end{equation*}
belongs to $L^2\bigl(\R^M\bigr)$, and the Fourier transform $\tU(\bwy)$ is supported on the compact subset
\begin{equation*}\label{ls27}
E = \big\{\bwy \in \R^M : |y_m| \le \hh\delta_m\ \text{for each}\ m = 1, 2, \dots , M\big\}.
\end{equation*}
Moreover, (\ref{ls24}) implies that the inequality
\begin{equation*}\label{ls28}
1 \le \bigl|U(\ceta)\bigr|^2
\end{equation*}
holds at each point $\ceta$ in $\lL$.  It follows that the trigonometric polynomials
\begin{equation*}\label{ls29}
B(\bx) = \sum_{\ceta \in \lL} b(\ceta) e\bigl(\ceta^T \bx\bigr)
\end{equation*}
and
\begin{equation*}\label{ls30}
\widetilde{B}(\bx) = \sum_{\ceta \in \lL} b(\ceta) U(\ceta)^{-1} e\bigl(\ceta^T \bx\bigr),
\end{equation*}
are related by the identity
\begin{equation*}\label{ls31}
B(\bx) = \int_{\R^M} \tU(\bwy) \widetilde{B}(\bx + \bwy)\ \dbwy.
\end{equation*}
As the Fourier transform $\tU(\bwy)$ is supported on the subset $E \subseteq \R^M$, using
(\ref{ls25}) and Cauchy's inequality we find that
\begin{align}\label{ls32}
\begin{split}
\bigl|B(\bx)\bigr|^2 &\le \int_E \bigl|\tU(\bwy)\bigr|^2\ \dbwy \int_E \bigl|\widetilde{B}(\bx + \bw)\bigr|^2\ \dbw\\
	&= \prod_{m=1}^M \bigl(L_m + \delta_m^{-1}\bigr) \int_{\bx + E} \bigl|\widetilde{B}(\bw)\bigr|^2\ \dbw.
\end{split}
\end{align}
Therefore the inequality (\ref{ls32}) implies that
\begin{equation}\label{ls36}
\sum_{\bxi \in \K} \bigl|B(A\bxi)\bigr|^2 
	\le \prod_{m=1}^M \bigl(L_m + \delta_m^{-1}\bigr) \sum_{\bxi \in \K} \int_{A\bxi + E} \bigl|\widetilde{B}(\bw)\bigr|^2\ \dbw.
\end{equation}

We claim that the subsets in the collection
\begin{equation}\label{ls37}
\big\{A\bxi + \bv + E : \bxi \in \K\ \text{and}\ \bv \in \Z^M\big\}
\end{equation}
are disjoint subsets of $\R^M$.  Suppose that $\bxi_1$ and $\bxi_2$ are points in $\K$, $\bv_1$ and $\bv_2$ are 
points in $\Z^M$, $\be_1$ and $\be_2$ are points in $E$, and
\begin{equation}\label{ls38}
A\bxi_1 + \bv_1 + \be_1 = A\bxi_2 + \bv_2 + \be_2.
\end{equation}
We consider two cases.  First we suppose that $\bxi_1 \not= \bxi_2$.  Then (\ref{ls38}) implies that that for each $m = 1, 2, \dots , M$, we have
\begin{align}\label{ls39}
\begin{split}
\bigg\| \sum_{n=1}^N \alpha_{mn} (\xi_{n1} - \xi_{n2})\bigg\| 
	&\le \biggl| \sum_{n=1}^N \alpha_{mn} (\xi_{n1} - \xi_{n2}) + v_{1m} - v_{2m}\biggr|\\
	&= |e_{m1} - e_{m2}|\\
	&\le \delta_m.
\end{split}
\end{align}
As $\bxi_1 - \bxi_2$ is a nonzero point in $\K - \K$, (\ref{ls23}) and (\ref{ls39}) imply that
\begin{equation*}\label{ls40}
P_M\bigl(A(\bxi_1 - \bxi_2)\bigr) Q_N(\bxi_1 - \bxi_2) \le \theta \gamma(A, \K) < \gamma(A, \K), 
\end{equation*}
which is impossible.  Next we suppose that $\bxi_1 = \bxi_2$.  In this case we get
\begin{equation}\label{ls41}
|v_{1m} - v_{2m}| = |e_{1m} - e_{2m}| \le \delta_m < 1
\end{equation}
for each $m = 1, 2, \dots , M$.  But (\ref{ls41}) implies that $\bv_1 = \bv_2$, and therefore $\be_1 = \be_2$.  We have shown 
that (\ref{ls38}) implies that $\bxi_1 = \bxi_2$, $\be_1 = \be_2$, and $\bv_1 = \bv_2$, and this verifies our claim.

Because the subsets in the collection (\ref{ls37}) are disjoint in $\R^M$, the images of the subsets
\begin{equation*}\label{41.5}
\big\{A\bxi + E : \bxi \in \K\big\}
\end{equation*}
in the group $G_2 = (\R/\Z)^M$ are also disjoint.  In particular, the set
\begin{equation}\label{ls42}
\bigcup_{\bxi \in \K} \bigl(A\bxi + E\bigr)
\end{equation}
is contained in a fundamental domain for the quotient group $(\R/\Z)^M$.  Therefore we obtain the estimate
\begin{align}\label{ls43}
\begin{split}
\sum_{\bxi \in \K} \int_{A\bxi + E} \bigl|\widetilde{B}(\bw)\bigr|^2\ \dbw &\le \int_{(\R/\Z)^M} \bigl|\widetilde{B}(\bw)\bigr|^2\ \dbw\\
	&= \sum_{\ceta \in \lL} \bigl|b(\ceta)\bigr|^2\bigl|U(\ceta)\bigr|^{-2}\\
	&\le  \sum_{\ceta \in \lL} \bigl|b(\ceta)\bigr|^2.
\end{split}
\end{align} 
By combining (\ref{ls36}) and (\ref{ls43}), we arrive at the inequality
\begin{equation}\label{ls44}
\sum_{\bxi \in \K}~\biggl|\sum_{\ceta \in \lL} b(\ceta) e(\ceta^TA\bxi)\biggr|^2 
	\le \prod_{m=1}^M \bigl(L_m + \delta_m^{-1}\bigr) \sum_{\ceta \in \lL} \bigl|b(\ceta)\bigr|^2.
\end{equation}

For each $m = 1, 2, \dots , M$, we select $\delta_m$ so that
\begin{equation}\label{ls45}
\delta_m^{-1} = X(L_m + 1) + 1,
\end{equation}
where $X$ is a positive real parameter at our disposal.  This clearly verifies the requirement that
$0 < \delta_m < 1$.  We select $X$ to be the unique positive real number such that
\begin{equation}\label{ls46}
\prod_{m=1}^M \bigl(X + (L_m + 1)^{-1}\bigr) = \frac{|\K|}{\theta \gamma(A, \K) |\lL|}.
\end{equation}
Then it is obvious that this choice of $X$ must satisfy the inequality
\begin{equation}\label{ls47}
X^M \le \frac{|\K|}{\theta \gamma(A, \K) |\lL|}.
\end{equation}
And it follows easily that the identity (\ref{ls23}) holds.  Using (\ref{ls45}) and (\ref{ls47}) we find that
\begin{align}\label{ls48}
\begin{split}
\prod_{m=1}^M \bigl(L_m + \delta_m^{-1}\bigr) &= |\lL| (X + 1)^M\\
	&\le| \lL| \biggl(\biggl(\frac{|\K|}{\theta \gamma(A, \K) |\lL|}\biggr)^{\frac1M} + 1\biggr)^M\\
	& = \Bigl((\theta \gamma(A, \K))^{-\frac1M} |\K|^{\frac1M} + |\lL|^{\frac1M}\Bigr)^M.
\end{split}
\end{align}
As $0 < \theta < 1$ is arbitrary, the inequality (\ref{ls22}) follows from (\ref{ls44}) and (\ref{ls48}).
\end{proof}

\begin{lemma}\label{lemls3}  Assume that $A$ is a point in $G_1$ such that $0 < \gamma(A, \K)$, and $\bwy$ is a point in $(\R/\Z)^M$.
Let $0 < \delta_m \le 1$ for each $m = 1, 2, \dots , M$, and let $\bx \mapsto \Phi\bigl(\bx, \Delta(\bwy)\bigr)$ denote 
the characteristic function of the subset
\begin{equation}\label{ls100} 
\Delta(\bwy) = \{\bx \in (\R/\Z)^M : \|x_m - y_m\| \le \hh \delta_m\ \text{for each}\ m = 1, 2, \dots , M\}.
\end{equation}
Then for each subset $\K$ of the form {\rm (\ref{ls3})}, we have
\begin{equation}\label{ls101}
\sum_{\bxi \in \K} \Phi\bigl(A\bxi, \Delta(\bwy)\bigr) \le 4^M \gamma(A, \K)^{-1} \mu_2\bigl(\Delta(\bwy)\bigr) |\K| + 6^M,
\end{equation}
where $\mu_2\bigl(\Delta(\bwy)\bigr) = \delta_1 \delta_2 \cdots \delta_M$ is the normalized Haar measure
of $\Delta(\bwy) \subseteq (\R/\Z)^M$.
\end{lemma}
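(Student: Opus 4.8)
The plan is to majorize the indicator $\bx\mapsto\Phi\bigl(\bx,\Delta(\bwy)\bigr)$ by the square modulus of a trigonometric polynomial assembled coordinatewise from the Fej\'er majorants of Corollary~\ref{corext4}, and then to estimate the resulting exponential sum with the large sieve inequality of Lemma~\ref{lemls1}. For each $m=1,\dots,M$ I would set $\Lambda_m=\lceil\delta_m^{-1}\rceil$ and $L_m=\Lambda_m-1\ge 0$. Because $0<\delta_m\le 1$, the interval $[\delta_m^{-1},2\delta_m^{-1}]$ has length at least $1$, so it contains the integer $\Lambda_m$, giving $\Lambda_m^{-1}\le\delta_m$ \emph{and} $\Lambda_m\delta_m<2$ at the same time; this is the one inequality that will make the constants close up exactly. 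If $\delta_m<1$ then $L_m\ge 1$, and Corollary~\ref{corext4} applied to the arc of length $\delta_m$ in $\R/\Z$ centred at $y_m$ produces a trigonometric polynomial $u_m$ of degree at most $L_m$ with $\p_{y_m-\delta_m/2,\,y_m+\delta_m/2}\le|u_m|^2$ and $\int_{\R/\Z}|u_m|^2=\delta_m+\Lambda_m^{-1}$ (in the degenerate case $\delta_m=1$ I would take $u_m\equiv\sqrt2$, which enjoys the same two properties). Since $\Delta(\bwy)$ is a \emph{closed} box, a boundary point may lie only on the boundary of some of these arcs, so to get an honest pointwise majorization I would first dilate each $\delta_m$ to $\delta_m+2\ep$ with $\delta_m+2\ep<1$, run the argument below, and let $\ep\downarrow 0$ at the end; the degrees $L_m$ do not move with $\ep$, so the bound is continuous there.

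Either way one reaches
\[
\Phi\bigl(\bx,\Delta(\bwy)\bigr)\le\prod_{m=1}^M|u_m(x_m)|^2=|B(\bx)|^2,\qquad B(\bx):=\sum_{\ceta\in\lL}b(\ceta)\,e(\ceta^T\bx),
\]
where $\lL=\{\ceta\in\Z^M:0\le\eta_m\le L_m\}$ and $b(\ceta)=\prod_m\widehat{u}_m(\eta_m)$, so that $|\lL|=\prod_m\Lambda_m$ and, by orthogonality, $\sum_{\ceta\in\lL}|b(\ceta)|^2=\prod_m\int_{\R/\Z}|u_m|^2=\prod_m(\delta_m+\Lambda_m^{-1})$.

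Applying Lemma~\ref{lemls1} (legitimate since $0<\gamma(A,\K)$) with these coefficients, and writing $P=\gamma(A,\K)^{-1}|\K|$, gives
\[
\sum_{\bxi\in\K}\Phi\bigl(A\bxi,\Delta(\bwy)\bigr)\le\sum_{\bxi\in\K}|B(A\bxi)|^2\le\Bigl(P^{\frac1M}+\bigl(\textstyle\prod_m\Lambda_m\bigr)^{\frac1M}\Bigr)^{M}\prod_{m=1}^M\bigl(\delta_m+\Lambda_m^{-1}\bigr).
\]
Using the elementary bound $(a+b)^M\le\bigl(2\max\{a,b\}\bigr)^M\le 2^M(a^M+b^M)$ with $a^M=P$ and $b^M=\prod_m\Lambda_m$, the right-hand side is at most
\[
2^MP\prod_{m=1}^M(\delta_m+\Lambda_m^{-1})+2^M\prod_{m=1}^M\bigl(\Lambda_m(\delta_m+\Lambda_m^{-1})\bigr)=2^MP\prod_{m=1}^M(\delta_m+\Lambda_m^{-1})+2^M\prod_{m=1}^M(\Lambda_m\delta_m+1).
\]
Now the choice of $\Lambda_m$ gives $\delta_m+\Lambda_m^{-1}\le 2\delta_m$ and $\Lambda_m\delta_m+1<3$ for every $m$, so this is at most $2^MP\cdot 2^M\prod_m\delta_m+2^M3^M=4^M\gamma(A,\K)^{-1}|\K|\,\mu_2\bigl(\Delta(\bwy)\bigr)+6^M$, as desired. (If some $\delta_m=1$ one can also simply delete that coordinate and invoke the $(M-1)$-dimensional case, noting that $\gamma$ of the smaller matrix is at least $\gamma(A,\K)$ and that $\prod_{m'\ne m}\delta_{m'}=\mu_2(\Delta(\bwy))$; this is consistent since $4^{M-1}\le4^M$ and $6^{M-1}\le6^M$.)

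I expect the only step that needs an idea, rather than routine manipulation, to be the selection of the degrees $L_m$: one must make the two terms coming out of Lemma~\ref{lemls1} collapse to \emph{exactly} $4^M\gamma(A,\K)^{-1}|\K|\mu_2$ and $6^M$, and the decisive observation is that an integer $\Lambda_m\in[\delta_m^{-1},2\delta_m^{-1}]$ simultaneously controls $\delta_m+\Lambda_m^{-1}$ (from above by $2\delta_m$) and $\Lambda_m\delta_m$ (from above by $2$). The secondary technical point is the boundary of the closed box, which the dilation-and-limit device disposes of.
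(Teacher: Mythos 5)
Your proof is correct and follows essentially the same route as the paper's: majorize the box indicator $\Phi(\cdot,\Delta(\bwy))$ coordinatewise by the Fej\'er majorants of Corollary~\ref{corext4}, apply the large sieve bound of Lemma~\ref{lemls1} to the resulting exponential polynomial, and tune the degrees to collapse the two terms to $4^M\gamma(A,\K)^{-1}\mu_2|\K|$ and $6^M$. The only differences are cosmetic: the paper takes $L_m=\bigl[\delta_m^{-1}\bigr]$ so that $L_m\ge 1$ automatically (no degenerate $\delta_m=1$ case), uses the slightly sharper power-mean bound $(a^{1/M}+b^{1/M})^M\le 2^{M-1}(a+b)$ in place of your $2^M(a+b)$, and sidesteps your $\ep$-dilation by noting that $t(x)=|u(x)|^2$ is continuous and hence $\ge 1$ on the \emph{closed} arc as well as its interior.
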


\begin{proof}  For each $m = 1, 2, \dots , M$, let $x \mapsto \chi_m(x)$ be the characteristic function of the subset
\begin{equation*}\label{103}
\big\{x \in \R/\Z : \|x - y_m\| \le \hh\delta_m\big\}.
\end{equation*} 
We apply Corollary \ref{corext4} to the nonnegative valued function $\chi_m(x)$ with $L = L_m$.
By that result there exists a trigonometric polynomial
\begin{equation*}\label{ls108}
u_m(x) = \sum_{\ell = 0}^{L_m} \tu_m(\ell) e(\ell x)
\end{equation*}
such that, if $\|x - y_m\| \le \h\delta_m$ then
\begin{equation*}\label{ls109}
1 \le |u_m(x)|^2,
\end{equation*}
and 
\begin{equation*}\label{ls110}
\int_{\R/\Z} |u_m(x)|^2\ \dx = \delta_m + (L_m + 1)^{-1}.
\end{equation*}
Now let $\lL \subseteq \Z^M$ be defined by (\ref{ls4}).  It follows that there exist complex numbers $b(\ceta)$, defined 
at each point $\ceta$ in $\lL$, such that
\begin{equation*}\label{ls111}
\Phi\bigl(\bx, \Delta(\bwy)\bigr) \le \prod_{m=1}^M \bigl|u_m(x_m)\bigr|^2 = \biggl|\sum_{\ceta \in \lL} b(\ceta) e(\ceta^T \bx)\biggr|^2
\end{equation*}
at each point $\bx$ in $(\R/\Z)^M$.   The inequality (\ref{ls22}) implies that
\begin{align}\label{ls112}
\begin{split}
\sum_{\bxi \in \K} \Phi\bigl(A\bxi, \Delta(\bwy)\bigr) &\le \sum_{\bxi \in \K}~\biggl|\sum_{\ceta \in \lL} b(\ceta) e\bigl(\ceta^T A \bxi\bigr)\biggr|^2\\ 
	          &\le \Bigl(\gamma(A, \K)^{-\frac1M} |\K|^{\frac1M} + |\lL|^{\frac1M}\Bigr)^M \sum_{\ceta \in \lL} |b(\ceta)|^2\\
	          &\le 2^{M-1} \bigl(\gamma(A, \K)^{-1} |\K| + |\lL|\bigr) \prod_{m=1}^M \bigl(\delta_m + (L_m + 1)^{-1}\bigr).                             
\end{split}
\end{align}
The positive integers $L_m$ are at our disposal.  We select
\begin{equation*}\label{ls113}
L_m = \bigl[\delta_m^{-1}\bigr] \le \delta_m^{-1} < L_m + 1,
\end{equation*}
and the upper bound
\begin{align}\label{ls114}
\begin{split}
2^{M-1} \bigl(\gamma(A, \K)^{-1} |\K| + |\lL|\bigr) &\prod_{m=1}^M \bigl(\delta_m + (L_m + 1)^{-1}\bigr)\\
	&\le 4^M \gamma(A, \K)^{-1} \mu_2\bigl(\Delta(\bwy)\bigr) |\K| + 6^M
\end{split}
\end{align}
follows easily.  Then (\ref{ls101}) follows from (\ref{ls112}) and (\ref{ls114}).
\end{proof}

\section{Proof of Theorem \ref{thmintro1} and Corollary \ref{corintro2}}

We define a positive integer valued function $\bxi \mapsto v(\bxi)$ on elements $\bxi$ in the difference set $X = Y - Y$ by
\begin{equation}\label{prf1}
\biggl|\sum_{\ceta \in Y} e(\bx^T \ceta)\biggr|^2 = \sum_{\bxi \in X} v(\bxi) e(\bx^T \bxi).
\end{equation}
It follows that $1 \le v(\bxi) \le |Y|$ at each point $\bxi$ in $X$, $v(\bzero) = |Y|$, and
\begin{equation*}\label{prf2}
\sum_{\bxi \in X} v(\bxi) = |Y|^2.
\end{equation*}
We also define a trigonometric polynomial $\sigma: (\R/\Z)^M \rightarrow [0, \infty)$ by
\begin{equation*}\label{prf3}
\sigma(\bx) = \prod_{m=1}^M \tau_{L_m - 1}(x_m),
\end{equation*}
where $\tau_{L_m - 1}(x)$ is the nonnegative trigonometric polynomial defined by (\ref{tp9}).  It follows from (\ref{tp17})
that $\sigma(\bx)$ has positive Fourier coefficients supported on the subset
\begin{equation*}\label{prf4}
\lL - \lL = \big\{\bell \in \Z^M : 1-L_m \le \ell_m \le L_m - 1\big\}.
\end{equation*}
From the statement of Lemma \ref{lemtp5}, we conclude that
\begin{equation}\label{prf7}
0 \le \sigma(\bx) \le \prod_{M=1}^M \min\big\{L_m, (2\|x_m\|)^{-1}\big\} = F_{\bL}(\bx),
\end{equation}
and
\begin{equation}\label{prf8}
\prod_{m=1}^M \log (L_m + 1) \le \tsigma(\bzero).
\end{equation}
Using (\ref{prf8}) we have
\begin{equation}\label{prf9}
|Y|^2 \prod_{m=1}^M \log(L_m + 1) \le \tsigma(\bzero) |Y|^2 
		\le \sum_{\bell \in \lL - \lL} \tsigma(\bell) \biggl|\sum_{\ceta \in Y} e(\bell^TA\ceta)\biggr|^2.
\end{equation}
Then using (\ref{prf1}) and (\ref{prf7}) we find that
\begin{align}\label{prf10}
\begin{split}
\sum_{\bell \in \lL - \lL} \tsigma(\bell)\biggl|\sum_{\ceta \in Y} e(\bell^TA\ceta)\biggr|^2
	&= \sum_{\bxi \in X} v(\bxi) \sum_{\bell \in \lL - \lL} \tsigma(\bell) e(\bell^TA\bxi)\\
	&\le |Y| \sum_{\bxi \in X} \sigma(A\bxi)\\
	&\le |Y| \sum_{\bxi \in X} F_{\bL}(A\bxi)\\
	&= |Y| \prod_{m=1}^M L_m + |Y| \sum_{\substack{\bxi \in X\\\bxi \not= \bzero}} F_{\bL}(A\bxi). 
\end{split}
\end{align}
The inequality (\ref{intro10}) in the statement of Theorem \ref{thmintro1} follows from (\ref{prf9}) and (\ref{prf10}).

To verify Corollary \ref{corintro2} we apply Theorem \ref{thmintro1} with 
\begin{equation*}\label{prff13}
L_m \le |Y|^{\frac1M} < L_m + 1,\quad\text{for each $m = 1, 2, \dots , M$}.
\end{equation*}
Then (\ref{intro10}) implies that
\begin{equation}\label{prf14}
|Y|\biggl(\frac{\log |Y|}{M}\biggr)^M - |Y| \le \sum_{\substack{\bxi \in X\\\bxi \not= \bzero}} F_{\bL}(A\bxi) 
	\le \sum_{\substack{\bxi \in X\\\bxi \not= \bzero}} F(A\bxi).
\end{equation}

\section{Proof of Theorem \ref{thmintro3} and Corollary \ref{cormet4}}

Let $G_1$ be a compact abelian group, $\mu_1$ a Haar measure on the Borel subsets of $G_1$ normalized so that
$\mu_1\bigl(G_1\bigr) = 1$, and $\Gamma_1$ the dual group.  That is, $\Gamma_1$ is the group of continuous homomorphisms
$\gamma_1 : G_1 \rightarrow \T$, where 
\begin{equation*}\label{ub0.5}
\T = \{z\in \C: |z| = 1\}
\end{equation*}
is the circle group.  It follows from the Pontryagin duality theorem that $\Gamma_1$ is discrete.  Let $G_2$, $\mu_2$, and 
$\Gamma_2$, be another such triple.  

\begin{theorem}\label{thmub5}  Assume that $\p:G_1 \rightarrow G_2$ is a continuous, surjective, homomorphism.  Then 
for every function $f:G_2 \rightarrow \C$ in $L^1\bigl(G_2, \mu_2\bigr)$ we have
\begin{equation}\label{ub2}
\int_{G_1} f\bigl(\p(x)\bigr)\ \dmu_1(x) = \int_{G_2} f(y)\ \dmu_2(y).
\end{equation}
\end{theorem}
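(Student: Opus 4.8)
The plan is to show that the Borel measure on $G_2$ obtained by pushing $\mu_1$ forward along $\p$ is translation invariant, hence equals the normalized Haar measure $\mu_2$ by the uniqueness of Haar measure, and then to extend the resulting change-of-variables identity from characteristic functions to all of $L^1\bigl(G_2, \mu_2\bigr)$.

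First I would define $\nu(E) = \mu_1\bigl(\p^{-1}(E)\bigr)$ for each Borel set $E \subseteq G_2$. Continuity of $\p$ guarantees that $\p^{-1}(E)$ is a Borel subset of $G_1$, so $\nu$ is well defined; countable additivity of $\nu$ follows from that of $\mu_1$ together with the fact that $\p^{-1}$ preserves disjoint unions, and surjectivity of $\p$ gives $\nu(G_2) = \mu_1(G_1) = 1$. Thus $\nu$ is a Borel probability measure on $G_2$. To see that $\nu$ is translation invariant, fix $y_0 \in G_2$ and use surjectivity to choose $x_0 \in G_1$ with $\p(x_0) = y_0$; since $\p$ is a homomorphism we have $\p(x) - y_0 = \p(x - x_0)$, and therefore $\p^{-1}(E + y_0) = \p^{-1}(E) + x_0$ for every Borel set $E$. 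Translation invariance of $\mu_1$ then yields $\nu(E + y_0) = \mu_1\bigl(\p^{-1}(E) + x_0\bigr) = \mu_1\bigl(\p^{-1}(E)\bigr) = \nu(E)$. By the uniqueness of the normalized Haar measure on the compact abelian group $G_2$, it follows that $\nu = \mu_2$.

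Finally I would deduce (\ref{ub2}) by a standard approximation argument. If $f$ is the characteristic function of a Borel set $E \subseteq G_2$, then the left side of (\ref{ub2}) equals $\mu_1\bigl(\p^{-1}(E)\bigr) = \nu(E) = \mu_2(E)$, the right side. By linearity the identity holds for nonnegative simple functions, by the monotone convergence theorem it extends to arbitrary nonnegative Borel functions, and by splitting a general $f \in L^1\bigl(G_2, \mu_2\bigr)$ into the positive and negative parts of its real and imaginary components it extends to all of $L^1$; applying the identity already proved for the nonnegative function $|f|$ shows moreover that $x \mapsto f\bigl(\p(x)\bigr)$ lies in $L^1\bigl(G_1, \mu_1\bigr)$, so every term makes sense.

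The only step requiring genuine care is the identification $\nu = \mu_2$: one must invoke the uniqueness theorem for Haar measure in the form asserting that a translation-invariant Borel probability measure on a compact group is the normalized Haar measure, which for the metrizable groups relevant to this paper carries no subtlety, since every finite Borel measure on such a space is automatically regular. An alternative, character-theoretic route would verify (\ref{ub2}) directly for each $\gamma_2 \in \Gamma_2$ --- using that $\gamma_2 \circ \p$ is a nontrivial element of $\Gamma_1$ whenever $\gamma_2$ is nontrivial, so that both integrals vanish --- and then extend by density of trigonometric polynomials in $C(G_2)$ and of $C(G_2)$ in $L^1\bigl(G_2, \mu_2\bigr)$; but this requires first knowing that $f \mapsto \int_{G_1} f\bigl(\p(x)\bigr)\,\dmu_1(x)$ is a bounded functional on $L^1\bigl(G_2, \mu_2\bigr)$, which is essentially the assertion of the theorem, so the pushforward argument is cleaner.
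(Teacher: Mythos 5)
Your proof is correct, but it takes a genuinely different route from the paper's. You push $\mu_1$ forward along $\p$, check translation invariance of the resulting measure $\nu$ using surjectivity and the homomorphism property, and invoke uniqueness of normalized Haar measure on the compact group $G_2$ to conclude $\nu = \mu_2$; the identity (\ref{ub2}) then follows by the standard ladder from characteristic functions to $L^1$. The paper instead verifies (\ref{ub2}) character by character: it observes that $\gamma_2 \circ \p$ is a character of $G_1$ which is principal if and only if $\gamma_2$ is principal (surjectivity is used exactly here), so both sides of (\ref{ub2}) agree on every $\gamma_2 \in \Gamma_2$ by orthogonality, hence on all finite linear combinations of characters, and then concludes by density of trigonometric polynomials in $L^1(G_2,\mu_2)$ ``in a standard manner.'' The critique you raise against the character-theoretic route is a fair one: the density step as stated requires a little care, since a priori one does not know that $f \mapsto \int_{G_1} f(\p(x))\,\dmu_1(x)$ is even well defined, let alone $L^1$-bounded, for general $f \in L^1(G_2,\mu_2)$; the usual way to fill this in is to pass through uniform density in $C(G_2)$ and then, in effect, rediscover the pushforward measure via Riesz representation. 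Your argument sidesteps this by producing the pushforward measure at the outset, at the cost of invoking the uniqueness theorem for Haar measure rather than only the orthogonality of characters. Either route is sound; yours is the more self-contained, the paper's the shorter once the density step is taken for granted.
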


\begin{proof}  Let $\gamma_2$ be a nonprincipal character in the dual group $\Gamma_2$.  Then the composition
\begin{equation}\label{ub3}
\gamma_2\circ\p : G_1 \rightarrow \T
\end{equation}
is clearly a continuous homomorphism, and is therefore a character in $\Gamma_1$.  As $\gamma_2$ is not principal,
there exists a point $y_0$ in $G_2$ such that $\gamma_2(y_0) \not= 1$.  Because $\p$ is surjective, there exists a
point $x_0$ in $G_1$ such that $\p(x_0) = y_0$.  Then we have
\begin{equation*}\label{ub4}
\gamma_2\bigl(\p(x_0)\bigr) = \gamma_2(y_0) \not= 1,
\end{equation*}
and it follows that the composition (\ref{ub3}) is not the principle character in $\Gamma_1$.  From the orthogonality relations
for characters we find that
\begin{equation*}\label{ub5}
\int_{G_1} \gamma_2\bigl(\p(x)\bigr)\ \dmu_1(x) = \int_{G_2} \gamma_2(y)\ \dmu_2(y) = 0.
\end{equation*}
If $\gamma_2$ {\it is} the principal character in $\Gamma_2$, then it is obvious that the composition (\ref{ub3}) is
the principal character in $\Gamma_1$.  Hence in this case we get
\begin{equation*}\label{ub5.1}
\int_{G_1} \gamma_2\bigl(\p(x)\bigr)\ \dmu_1(x) = \int_{G_2} \gamma_2(y)\ \dmu_2(y) = 1.
\end{equation*}
Thus we have
\begin{equation}\label{ub6}
\int_{G_1} \gamma_2\bigl(\p(x)\bigr)\ \dmu_1(x) = \int_{G_2} \gamma_2(y)\ \dmu_2(y)
\end{equation}
for all characters $\gamma_2$ in $\Gamma_2$.   If $F_2 \subseteq \Gamma_2$ if a finite subset, and
\begin{equation*}\label{ub7}
T(y) = \sum_{\gamma_2 \in F_2} c(\gamma_2)\gamma_2(y)
\end{equation*}
is a finite linear combination of characters from $\Gamma_2$ with complex coefficients, then (\ref{ub6}) implies that
\begin{equation}\label{ub8}
\int_{G_1} T\bigl(\p(x)\bigr)\ \dmu_1(x) = \int_{G_2} T(y)\ \dmu_2(y).
\end{equation}
Because the set of all finite linear combinations of characters from $\Gamma_2$ is dense in $L^1\bigl(G_2, \mu_2\bigr)$, it
follows in a standard manner that (\ref{ub2}) holds.
\end{proof}

\begin{corollary}\label{corub6}  Let $E \subseteq G_2$ be a Borel set.  Then we have
\begin{equation}\label{ub9}
\mu_1\bigl(\p^{-1}(E)\bigr) = \mu_1\big\{x \in G_1 : \p(x) \in E\big\}  = \mu_2(E).
\end{equation}
\end{corollary}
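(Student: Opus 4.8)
The plan is to deduce this corollary directly from Theorem \ref{thmub5} by specializing to an indicator function. First I would observe that since $\mu_2\bigl(G_2\bigr) = 1$, the group $G_2$ has finite measure, so for any Borel set $E \subseteq G_2$ the characteristic function $f = \mathbf{1}_E$ belongs to $L^1\bigl(G_2, \mu_2\bigr)$. I would then apply \eqref{ub2} of Theorem \ref{thmub5} to this $f$.

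The key computation is that the pullback of an indicator is the indicator of the preimage: for each $x \in G_1$ one has $f\bigl(\p(x)\bigr) = \mathbf{1}_E\bigl(\p(x)\bigr) = \mathbf{1}_{\p^{-1}(E)}(x)$, so that $\int_{G_1} f\bigl(\p(x)\bigr)\ \dmu_1(x) = \mu_1\bigl(\p^{-1}(E)\bigr)$. On the other side, $\int_{G_2} f(y)\ \dmu_2(y) = \mu_2(E)$. Combining these with \eqref{ub2} yields $\mu_1\bigl(\p^{-1}(E)\bigr) = \mu_2(E)$, which is precisely \eqref{ub9}; the set-builder middle expression is just notation for $\p^{-1}(E)$.

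There is essentially no obstacle here: the only point requiring a word of justification is that $\mathbf{1}_E$ is genuinely in $L^1\bigl(G_2,\mu_2\bigr)$, which is immediate from the normalization $\mu_2\bigl(G_2\bigr)=1$, and that $\p^{-1}(E)$ is Borel, which follows from continuity of $\p$. Thus the corollary is a one-line consequence of the theorem once the indicator substitution is made.
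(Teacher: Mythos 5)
Your proposal is correct and coincides with the paper's own argument: the paper likewise deduces the corollary from Theorem \ref{thmub5} by specializing $f$ to the characteristic function $\chi_E$ of the Borel set $E$. The extra remarks you include (that $\mathbf{1}_E \in L^1(G_2,\mu_2)$ because $\mu_2(G_2)=1$, and that $\p^{-1}(E)$ is Borel by continuity of $\p$) are sound but are left implicit in the paper.
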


\begin{proof}  This is (\ref{ub2}) in the special case $f(y) = \chi_E(y)$, where $\chi_E$ is the characteristic
function of the Borel set $E$.
\end{proof}

We now return to consideration of the groups 
\begin{equation}\label{ub20}
G_1 = (\R/\Z)^{MN},\quad\text{and}\quad G_2 = (\R/\Z)^M,
\end{equation}
specified in (\ref{intro2}).  We continue to write the elements of $G_1$ as $M\times N$ matrices with entries in the additive group
$\R/\Z$.  If $\bxi$ is a (column) vector in $\Z^N$ then 
\begin{equation}\label{ub22}
A \mapsto A\bxi = \biggl(\sum_{n=1}^N \alpha_{mn} \xi_n\biggr)
\end{equation}
defines a continuous homomorphism from $G_1$ into $G_2$.  If $\bxi \not= \bzero$ then it follows that (\ref{ub22})
is surjective, and therefore the conclusions of Theorem \ref{thmub5} and Corollary \ref{corub6} can be applied to 
this map.

\begin{lemma}\label{lemub7}  Let $G_2$ be as in {\rm (\ref{ub20})}.  If $0 < \delta \le 1$ then
\begin{equation}\label{ub26}
\mu_2\bigg\{\bbeta \in G_2 : \prod_{m=1}^M\bigl(2\|\beta_m\|\bigr) \le \delta\bigg\} 
	= \frac{1}{(M-1)!} \int_0^{\delta}\bigl(-\log x\bigr)^{M-1}\ \dx.
\end{equation}
\end{lemma}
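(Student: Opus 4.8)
The plan is to reduce the left side of (\ref{ub26}) to a multiple integral over the unit cube and then evaluate that integral by a change of variables that converts the product constraint into a sum constraint.

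First I would apply Fubini's theorem, writing $\mu_2$ as the $M$-fold product of Haar measure on $\R/\Z$, so that the left side of (\ref{ub26}) becomes $\int_{(\R/\Z)^M}\chi\bigl(\prod_{m=1}^M 2\|\beta_m\|\le\delta\bigr)\,d\mu_2(\bbeta)$, where $\chi(\,\cdot\,)$ denotes the characteristic function of the indicated event. The elementary observation to record is that $\beta\mapsto 2\|\beta\|$ pushes Haar measure on $\R/\Z$ forward to Lebesgue measure on $[0,1]$: on the fundamental domain $[0,1)$ this function equals $2\beta$ on $[0,\tfrac12]$ and $2(1-\beta)$ on $[\tfrac12,1)$, and each of these two pieces contributes $\tfrac12\int_0^1 g(u)\,du$ for every integrable $g$ on $[0,1]$. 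Since the integrand above depends only on $2\|\beta_1\|,\dots,2\|\beta_M\|$, applying this coordinatewise shows that the left side of (\ref{ub26}) equals
\[
I_M(\delta):=\int_{[0,1]^M}\chi\Bigl(\prod_{m=1}^M u_m\le\delta\Bigr)\,du_1\cdots du_M .
\]

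Next I would substitute $u_m=e^{-t_m}$, so each $t_m$ runs over $[0,\infty)$ and $du_m=-e^{-t_m}\,dt_m$; the constraint becomes $t_1+\cdots+t_M\ge-\log\delta$, whence
\[
I_M(\delta)=\int_{[0,\infty)^M}\chi\bigl(t_1+\cdots+t_M\ge-\log\delta\bigr)\,e^{-(t_1+\cdots+t_M)}\,dt_1\cdots dt_M .
\]
Because the integrand depends only on $w=t_1+\cdots+t_M$, I would change variables from $(t_1,\dots,t_M)$ to $(t_1,\dots,t_{M-1},w)$, which has Jacobian $1$; for each fixed $w\ge 0$ the region $\{t_1,\dots,t_{M-1}\ge 0,\ t_1+\cdots+t_{M-1}\le w\}$ is a scaled simplex of $(M-1)$-dimensional volume $w^{M-1}/(M-1)!$. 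This yields
\[
I_M(\delta)=\frac{1}{(M-1)!}\int_{-\log\delta}^{\infty}w^{M-1}e^{-w}\,dw .
\]
Finally the substitution $w=-\log x$, under which $x=e^{-w}$ decreases through $(0,\delta]$ as $w$ increases through $[-\log\delta,\infty)$ and $dw=-x^{-1}\,dx$, converts this integral into $\frac{1}{(M-1)!}\int_0^\delta(-\log x)^{M-1}\,dx$, which is the right side of (\ref{ub26}).

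I do not anticipate any genuine obstacle; the only steps needing a word of justification are the pushforward claim for $\beta\mapsto 2\|\beta\|$ and the simplex-volume computation. One could equally well avoid the latter and instead argue by induction on $M$, using the recursion $I_M(\delta)=\delta+\int_\delta^1 I_{M-1}(\delta/u)\,du$, valid for $M\ge 2$ and $0<\delta\le 1$.
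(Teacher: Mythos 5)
Your proof is correct, and it reaches the same intermediate object as the paper (the Gamma$(M,1)$ tail integral $\frac{1}{(M-1)!}\int_{-\log\delta}^{\infty}w^{M-1}e^{-w}\,dw$) but by a genuinely different computational route. The paper phrases the problem probabilistically from the start: it observes that the maps $\bbeta\mapsto\log(2\|\beta_m\|)$ are $M$ i.i.d.\ random variables with density $h(x)=e^{x}\mathbf{1}_{\{x\le 0\}}$, notes that the density of their sum is the $M$-fold convolution $h^{(M)}$, and then identifies $h^{(M)}$ by computing $\widehat h(y)=(1-2\pi iy)^{-1}$, raising to the $M$-th power, and inverting the Fourier transform by repeated differentiation. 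You instead push Haar measure forward to Lebesgue measure on $[0,1]^M$ via $\beta\mapsto 2\|\beta\|$, then perform the exponential substitution $u_m=e^{-t_m}$ and integrate out the cross-sections of the simplex $\{t_1+\cdots+t_{M-1}\le w\}$ directly, getting $w^{M-1}/(M-1)!$ without any Fourier analysis. Your version is more elementary and self-contained (nothing beyond multivariable calculus), whereas the paper's version is shorter if one is comfortable with characteristic functions and the convolution theorem; your closing remark about the recursion $I_M(\delta)=\delta+\int_\delta^1 I_{M-1}(\delta/u)\,du$ gives yet a third, purely inductive path that avoids even the simplex-volume computation. All three are sound.
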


\begin{proof}  Because $G_2 = (\R/\Z)^M$ is a product set, the set of coordinate functions
\begin{equation*}\label{ub27}
\bbeta \mapsto \log\bigl(2\|\beta_m\|\bigr),\quad\text{for each}\ m = 1, 2, \dots , M,
\end{equation*}
is a collection of $M$ independent, identically distributed, random variables on the probability space $\bigl(G_2, \mu_2\bigr)$.
The density function of each of these random variables is
\begin{equation}\label{ub28}
h(x) = \begin{cases}  e^x & \text{if $x \le 0$,}\\
                                          0 & \text{if $0 < x$.}\end{cases}
\end{equation}
That is, for each index $m$, $1 \le m \le M$, and $-\infty < u < v < \infty$, we have
\begin{equation*}\label{ub29}
\mu_2\big\{\bbeta \in G_2 : u < \log\bigl(2\|\beta_m\|\bigr) \le v\big\} = \int_u^v h(x)\ \dx.
\end{equation*}
Therefore the density function associated to the sum
\begin{equation*}\label{ub29.1}
\bbeta \mapsto \sum_{m=1}^M \log\bigl(2\|\beta_m\|\bigr)
\end{equation*}
of $M$ independent random variables, is the $M$-fold
convolution 
\begin{equation*}\label{ub29.2}
h*h* \cdots *h(x) = h^{(M)}(x).
\end{equation*}  
In order to compute this density, observe that the Fourier transform of $h$ is 
\begin{equation}\label{ub30}
\uh(y) = \int_{\infty}^{\infty} h(x) e(-xy)\ \dx = (1 - 2\pi iy)^{-1}.
\end{equation}
And therefore the Fourier transform of $h^{(M)}(x)$ is $(1 - 2\pi iy)^{-M}$.  By differentiating both sides of (\ref{ub30}) repeatedly with
respect to $y$, we obtain the identity
\begin{equation*}\label{ub31}
\frac{1}{(M-1)!} \int_{-\infty}^0 (-x)^{M-1} e^x e(-xy)\ \dx = (1- 2\pi iy)^{-M}.
\end{equation*}
It follows that 
\begin{equation*}\label{ub32}
h^{(M)}(x) =  \frac{(-x)^{M-1} e^x}{(M-1)!}\quad \text{if $x \le 0$},
\end{equation*}
and
\begin{equation*}\label{ub33}
h^{(M)}(x) = 0\quad \text{if $0 < x$}.
\end{equation*}
In particular, if $0 < \delta \le 1$ we get
\begin{align}\label{ub34}
\begin{split}
\mu_2\bigg\{\bbeta \in G_2 : \sum_{m=1}^M &\log\bigl(2\|\beta_m\|\bigr) \le \log \delta\bigg\}\\
	&= \frac{1}{(M-1)!} \int_{-\infty}^{\log \delta} (-x)^{M-1} e^x\ \dx\\
	&= \frac{1}{(M-1)!} \int_0^{\delta}\bigl(-\log x\bigr)^{M-1}\ \dx.
\end{split}
\end{align}
Then (\ref{ub34}) is equivalent to (\ref{ub26}).
\end{proof}

\begin{corollary}\label{corub8}  Let $G_1$ be as in {\rm (\ref{ub20})}, and let $\bxi$ be a nonzero lattice point in $\Z^N$.  
If $1 \le \lambda < \infty$ then
\begin{equation}\label{ub37}
\mu_1\big\{A \in G_1: \lambda \le F(A\bxi)\big\} = \frac{1}{(M-1)!} \int_{\lambda}^{\infty} \bigl(\log x\bigr)^{M-1} x^{-2}\ \dx.
\end{equation}
\end{corollary}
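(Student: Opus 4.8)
The plan is to realize the set in question as the preimage, under the homomorphism $A \mapsto A\bxi$, of an explicit sublevel set in $G_2$, and then to invoke the measure‑invariance statement of Corollary~\ref{corub6} together with the distributional computation in Lemma~\ref{lemub7}.

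First I would fix a nonzero lattice point $\bxi \in \Z^N$ and let $\varphi \colon G_1 \to G_2$ be the map $\varphi(A) = A\bxi$ from (\ref{ub22}). As observed immediately after (\ref{ub22}), this $\varphi$ is a continuous surjective homomorphism, so Theorem~\ref{thmub5} and Corollary~\ref{corub6} apply. Next I would rewrite the event in terms of $\varphi$: since $F(\bx) = \prod_{m=1}^M (2\|x_m\|)^{-1}$, we have
\[
\big\{A \in G_1 : \lambda \le F(A\bxi)\big\} = \varphi^{-1}(E), \qquad E = \Big\{\bbeta \in G_2 : \prod_{m=1}^M \bigl(2\|\beta_m\|\bigr) \le \lambda^{-1}\Big\}.
\]
Here $E$ is a Borel subset of $G_2$, being a sublevel set of a measurable function (the locus where some $\|\beta_m\|$ vanishes is contained in a null set in any case), and because $1 \le \lambda < \infty$ we have $0 < \lambda^{-1} \le 1$, so Lemma~\ref{lemub7} applies with $\delta = \lambda^{-1}$. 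Combining Corollary~\ref{corub6} with Lemma~\ref{lemub7} gives
\[
\mu_1\big\{A \in G_1: \lambda \le F(A\bxi)\big\} = \mu_2(E) = \frac{1}{(M-1)!} \int_0^{\lambda^{-1}} \bigl(-\log x\bigr)^{M-1}\,dx.
\]

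Finally I would perform the substitution $x = t^{-1}$, so that $dx = -t^{-2}\,dt$ and $-\log x = \log t$, while the limits $x = 0$ and $x = \lambda^{-1}$ become $t = \infty$ and $t = \lambda$; the last display then transforms into $\frac{1}{(M-1)!}\int_{\lambda}^{\infty} (\log t)^{M-1} t^{-2}\,dt$, which is exactly (\ref{ub37}). There is no genuine obstacle: every ingredient — the surjectivity and measure invariance of $A \mapsto A\bxi$, the density identity of Lemma~\ref{lemub7}, and the elementary change of variable — is already available, so the proof amounts to assembling them in the right order. The only points meriting a moment's care are verifying that $E$ is measurable and that $\lambda^{-1}$ lies in the range $(0,1]$ in which Lemma~\ref{lemub7} is stated.
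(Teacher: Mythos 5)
Your proof is correct and takes essentially the same route as the paper: it transfers the measure computation from $G_1$ to $G_2$ via Corollary~\ref{corub6}, applies Lemma~\ref{lemub7} with $\delta = \lambda^{-1}$, and finishes by the change of variables $x = t^{-1}$, which the paper leaves implicit as ``a simple change of variables.''
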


\begin{proof}  From Corollary \ref{corub6}  we have
\begin{equation}\label{ub38}
\mu_1\big\{A \in G_1: \lambda \le F(A\bxi)\big\} = \mu_2\{\bbeta \in G_2 : \lambda \le F(\bbeta)\}.
\end{equation}
The measure of the set on the right of (\ref{ub38}) follows from (\ref{ub26}) by a simple change of variables.
\end{proof}

We are now in position to bound the $\mu_1$-measure of the set
\begin{equation*}\label{ub39}
\bigg\{A \in G_1 : \lambda \le \sum_{\substack{\bxi \in X\\\bxi \not= \bzero}} F(A\bxi)\bigg\},
\end{equation*}
where $X \subseteq \Z^N$ is a finite, nonempty subset of lattice points. 

\begin{theorem}\label{thmub9}  Let $G_1$ be as in {\rm (\ref{ub20})}, and let $X \subseteq \Z^N$ be a finite, nonempty subset of
lattice points with cardinaltiy $|X|$.  If $1 \le \lambda < \infty$ then
\begin{equation}\label{ub50}
\mu_1\bigg\{A \in G_1 : \lambda \le \sum_{\substack{\bxi \in X\\\bxi \not= \bzero}} F(A\bxi)\bigg\} 
		\le \frac{|X|}{M!} \int_{\lambda}^{\infty} \bigl(\log x\bigr)^M x^{-2}\ \dx.
\end{equation}
\end{theorem}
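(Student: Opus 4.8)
The plan is to replace the distribution function of $S(A):=\sum_{\bxi\in X,\ \bxi\ne\bzero}F(A\bxi)$ by a sum of first moments of the \emph{truncated} summands $\min\{F(A\bxi),\lambda\}$, and then to evaluate each such moment from Corollary \ref{corub8}. Truncation is essential here: a direct application of Markov's inequality to $S$ is vacuous, since by Corollary \ref{corub8} one has $\int_{G_1}F(A\bxi)\,\dmu_1(A)=\infty$ for every nonzero $\bxi$ (the relevant tail integral reduces to $\int_1^\infty x^{-1}(\log x)^{M-1}\,\dx$, which diverges). The additional power of $\log x$ appearing in (\ref{ub50}) as compared with Corollary \ref{corub8} is precisely the cost of this truncation.

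First I would record, for every $A$ in $G_1$ and every real $\lambda\ge 1$, the elementary pointwise inequality
\begin{equation*}
\mathbf{1}\bigl[\lambda\le S(A)\bigr]\ \le\ \frac{1}{\lambda}\,\min\{S(A),\lambda\}\ \le\ \frac{1}{\lambda}\sum_{\substack{\bxi\in X\\ \bxi\ne\bzero}}\min\{F(A\bxi),\lambda\}.
\end{equation*}
The first inequality is immediate; the second follows from $\min\{\sum_j a_j,\lambda\}\le\sum_j\min\{a_j,\lambda\}$ for nonnegative reals $a_j$ (if some $a_j\ge\lambda$ the right side is already at least $\lambda$, and otherwise no summand is truncated). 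Integrating over $G_1$ against $\mu_1$, and noting that the sum has at most $|X|$ terms, each contributing the same integral by Corollary \ref{corub8}, one obtains
\begin{equation*}
\mu_1\bigl\{A\in G_1:\lambda\le S(A)\bigr\}\ \le\ \frac{|X|}{\lambda}\int_{G_1}\min\{F(A\bxi),\lambda\}\,\dmu_1(A)
\end{equation*}
for an arbitrary nonzero lattice point $\bxi$.

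It then remains to show that
\begin{equation*}
\int_{G_1}\min\{F(A\bxi),\lambda\}\,\dmu_1(A)\ =\ \frac{\lambda}{M!}\int_{\lambda}^{\infty}(\log x)^{M}x^{-2}\,\dx .
\end{equation*}
For this I would use the layer--cake identity $\int_{G_1}\min\{F(A\bxi),\lambda\}\,\dmu_1(A)=\int_0^{\lambda}\mu_1\{A\in G_1:t\le F(A\bxi)\}\,\dt$, split the outer integral at $t=1$ (the integrand being identically $1$ on $[0,1]$), insert the formula of Corollary \ref{corub8} on $[1,\lambda]$, interchange the two integrations by Tonelli's theorem, and simplify; a single integration by parts together with $\int_1^{\infty}x^{-2}(\log x)^{M-1}\,\dx=(M-1)!$ then produces the asserted value. (As a check, when $\lambda=1$ both sides equal $1$, for every $M$.) Combining the three displays gives (\ref{ub50}).

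The one substantive point is the truncation in the first inequality: a direct Markov estimate on $S$ gives nothing, and the crude union bound $\{\lambda\le S\}\subseteq\bigcup_{\bxi\ne\bzero}\{\lambda/|X|\le F(A\bxi)\}$ carries an extra factor of $|X|$ and is too lossy for our purposes; truncating at height $\lambda$, and only then applying Markov's inequality, reduces the whole estimate to the one-dimensional tail already computed in Corollary \ref{corub8}.
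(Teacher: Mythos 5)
Your proof is correct, and it rests on the same essential ingredients as the paper's: truncation of the heavy-tailed summands $F(A\bxi)$, Markov's inequality, and the explicit distribution function from Corollary \ref{corub8}. The packaging differs slightly. Writing $S(A)=\sum_{\bxi\in X,\,\bxi\ne\bzero}F(A\bxi)$, the paper introduces a free truncation height $\eta$, decomposes $\mu_1\{\lambda\le S\}\le\mu_1\{A\in\D(\eta):\lambda\le S\}+\mu_1\bigl(G_1\setminus\D(\eta)\bigr)$ with $\D(\eta)=\bigcap_{\bxi\in X}\{A:F(A\bxi)<\eta\}$, handles the first term by Markov's inequality and the second by a union bound, and then optimizes over $\eta$, finding $\eta=\lambda$ and invoking the identity (\ref{ub61}). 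Your pointwise inequality bounding the indicator of $\{\lambda\le S(A)\}$ by $\lambda^{-1}\sum_{\bxi}\min\{F(A\bxi),\lambda\}$ builds the optimal cutoff $\lambda$ in from the outset, so no optimization step is needed; the layer-cake evaluation of the truncated first moment then produces exactly the combination of one-dimensional integrals appearing in (\ref{ub61}), and the same identity closes the computation. This is a modest and pleasant streamlining of the paper's argument rather than a genuinely different route: the union bound and the Markov estimate are both still present, merely fused into a single pointwise inequality, and the calculus at the end is identical.
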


\begin{proof}  We may assume that $X$ does not contain $\bzero$.  Let $1 \le \eta < \infty$, and for each lattice point $\bxi$ in $X$, define
\begin{equation*}\label{ub52}
D(\eta, \bxi) = \big\{A \in G_1: F(A\bxi) < \eta\big\}.
\end{equation*}
By Corollary \ref{corub8} we have
\begin{align}\label{ub53}
\begin{split}
\mu_1\bigl(G_1 \setminus D(\eta, \bxi)\bigr) &= \mu_1\big\{A \in G_1: \eta \le F(A\bxi)\big\}\\
		&= \frac{1}{(M-1)!} \int_{\eta}^{\infty} \bigl(\log x\bigr)^{M-1} x^{-2}\ \dx.
\end{split}
\end{align}
Next we write
\begin{equation*}\label{ub54}
\D(\eta) = \bigcap_{\bxi \in X} D(\eta, \bxi),
\end{equation*}
so that using (\ref{ub53}) we have
\begin{align}\label{ub55}
\begin{split}
\mu_1\bigl(G_1 \setminus \D(\eta)\bigr) &= \mu_1\biggl(\bigcup_{\bxi \in X} \bigl(G_1 \setminus D(\eta, \bxi)\bigr)\biggr)\\
				       &\le \sum_{\bxi \in X} \mu_1\bigl(G_1 \setminus D(\eta, \bxi)\bigr)\\
				       &= \frac{|X|}{(M-1)!} \int_{\eta}^{\infty} \bigl(\log x\bigr)^{M-1} x^{-2}\ \dx.
\end{split}        
\end{align}
Using a standard argument we get the estimate
\begin{align}\label{ub57}
\begin{split}
\mu_1\bigg\{A \in \D(\eta) : \lambda \le &\sum_{\bxi \in X} F(A\bxi)\bigg\}\\
		&\le \lambda^{-1} \int_{\D(\eta)} \biggl(\sum_{\bxi \in X} F(A\bxi)\biggr)\ \dmu_1(A)\\
		&\le \lambda^{-1} \sum_{\bxi \in X} \biggl(\int_{D(\eta, \bxi)} F(A\bxi)\ \dmu_1(A)\biggr).
\end{split}
\end{align}
From (\ref{ub2}) and Corollary \ref{corub8}, the integral on the right of (\ref{ub57}) is
\begin{align}\label{ub58}
\begin{split}
\int_{D(\eta, \bxi)} F(A\bxi)\ \dmu_1(A) &= \int_{G_1} \chi_{D(\eta, \bxi)}\bigl(F(A\bxi)\bigr) F(A\bxi)\ \dmu_1(A)\\
			&= \int_{G_2} \chi_{D(\eta, \bxi)}\bigl(F(\bbeta)\bigr) F(\bbeta)\ \dmu_2(\bbeta)\\
			&= \frac{1}{(M-1)!} \int_1^{\eta} \bigl(\log x\bigr)^{M-1} x^{-1}\ \dx.
\end{split}
\end{align}
We combine (\ref{ub57}) and (\ref{ub58}) to obtain the inequality
\begin{align}\label{ub59}
\begin{split}
\mu_1\bigg\{A \in \D(\eta) : \lambda \le &\sum_{\bxi \in X} F(A\bxi)\bigg\}\\
			&\le \frac{|X|}{\lambda (M-1)!}  \int_1^{\eta} \bigl(\log x\bigr)^{M-1} x^{-1}\ \dx.
\end{split}
\end{align}
And then we combine (\ref{ub55}) and (\ref{ub59}) to get
\begin{align}\label{ub60}
\begin{split}
\mu_1\bigg\{&A \in G_1 : \lambda \le \sum_{\bxi \in X} F(A\bxi)\bigg\}\\
	&\le \mu_1\bigg\{A \in \D(\eta) : \lambda \le \sum_{\bxi \in X} F(A\bxi)\bigg\} + \mu_1\bigl(G_1 \setminus \D(\eta)\bigr)\\
	&\le \frac{|X|}{(M-1)!}\biggl(\lambda^{-1} \int_1^{\eta} \bigl(\log x\bigr)^{M-1} x^{-1}\ \dx 
		+  \int_{\eta}^{\infty} \bigl(\log x\bigr)^{M-1} x^{-2}\ \dx\biggr)	
\end{split}
\end{align}
The parameter $\eta$ in the upper bound on the right of (\ref{ub60}) is at our disposal.  A simple calculation shows
that the right hand side of (\ref{ub60}) is minimized at $\eta = \lambda$.  We find that
\begin{align}\label{ub61}
\begin{split}
\lambda^{-1} \int_1^{\lambda} \bigl(\log x\bigr)^{M-1}& x^{-1}\ \dx 
		+  \int_{\lambda}^{\infty} \bigl(\log x\bigr)^{M-1} x^{-2}\ \dx\\
		&= \frac{1}{M} \int_{\lambda}^{\infty} \bigl(\log x\bigr)^M x^{-2}\ \dx,
\end{split}
\end{align}
and the theorem is proved.
\end{proof}

In applications of Theorem \ref{thmub9} the identity
\begin{equation}\label{ub62}
\frac{1}{M!} \int_{\lambda}^{\infty} \bigl(\log x\bigr)^M x^{-2}\ \dx = \lambda^{-1} \sum_{m=0}^M \frac{(\log \lambda)^m}{m!}
\end{equation}
is useful.  We also note that if $1 \le \lambda_1 < \infty$ and $1 \le \lambda_2 < \infty$, then
\begin{align}\label{ub63}
\begin{split}
\sum_{m=0}^M \frac{(\log \lambda_1 + \log \lambda_2)^m}{m!}
	&= \sum_{m=0}^M \sum_{l=0}^m \frac{(\log \lambda_1)^l}{l!} \frac{(\log \lambda_2)^{m-l}}{(m-l)!}\\
	&= \sum_{l=0}^M  \frac{(\log \lambda_1)^l}{l!} \sum_{m=l}^M \frac{(\log \lambda_2)^{m-l}}{(m-l)!}\\
	&\le \biggl(\sum_{l=0}^M \frac{(\log \lambda_1)^l}{l!}\biggr)\biggl( \sum_{k=0}^M \frac{(\log \lambda_2)^k}{k!}\biggr).
\end{split}
\end{align}
For example, if $0 < \delta < 1$ then there exists a unique real number $\lambda$ such that $1 < \lambda < \infty$ and
\begin{equation*}\label{ub64}
\delta = \frac{1}{M!} \int_{\lambda}^{\infty} \bigl(\log x\bigr)^M x^{-2}\ \dx.
\end{equation*}
Using (\ref{ub62}) we find that
\begin{align*}\label{ub65}
\begin{split}
\delta &< \lambda^{-1} \sum_{m=0}^M 2^{M-m} \frac{(\log \lambda)^m}{m!}\\
               &= 2^M \lambda^{-1} \sum_{m=0}^M \frac{\bigl(\hh \log \lambda\bigr)^m}{m!}\\
               &\le 2^M \lambda^{-\h},
\end{split}
\end{align*}
and therefore
\begin{equation}\label{ub66}
\lambda < 4^M \delta^{-2}.
\end{equation}
Then using (\ref{ub62}) and (\ref{ub66}), we get
\begin{align}\label{ub67}
\begin{split}
\lambda &= \delta^{-1} \sum_{m=0}^M \frac{(\log \lambda)^m}{m!}\\
	      &\le \delta^{-1} \sum_{m=0}^M \frac{\bigl(\log 4^M + \log \delta^{-2}\bigr)^m}{m!}\\
	      &\le 4^M \delta^{-1} \sum_{m=0}^M \frac{\bigl(2 \log \delta^{-1}\bigr)^m}{m!}\\
	      &\le 8^M \delta^{-1} \sum_{m=0}^M \frac{\bigl(\log \delta^{-1}\bigr)^m}{m!}.
\end{split}
\end{align}

We now prove Theorem \ref{thmintro3}.  Let $\lambda$ be selected so that
\begin{equation*}\label{ub70}
\ep |X|^{-1} = \frac{1}{M!} \int_{\lambda}^{\infty} \bigl(\log x\bigr)^M x^{-2}\ \dx.
\end{equation*}
Then Theorem \ref{thmub9} implies that 
\begin{equation}\label{ub71}
\mu_1\bigg\{A \in G_1 : \lambda \le \sum_{\bxi \in X} F(A\bxi)\bigg\} \le \ep.
\end{equation}
We apply the inequality (\ref{ub67}) with $\delta = \ep |X|^{-1}$.  It follows from (\ref{ub71}) that the inequality (\ref{intro18})
holds at all point $A$ in $G_1$ outside a set of $\mu_1$-measure at most $\ep$.

Next we prove Corollary \ref{cormet4}.  It follows from Corollary \ref{corub6} that 
\begin{equation*}\label{ub78}
\mu_1\big\{A \in G_1 : F(A\bxi) < \infty\big\} = 1
\end{equation*}
for each point $\bxi \not= \bzero$ in $\Z^N$.  Therefore almost all points $A$ in $G_1$ belong to the subset
\begin{equation}\label{ub79}
\Y = \bigcap_{\substack{\bxi \in \Z^N\\ \bxi \not= \bzero}} \big\{A \in G_1 : F(A\bxi) < \infty\big\}.
\end{equation}

For each $\ell = 1, 2, \dots ,$ we define
\begin{equation}\label{ub80}
\ep_{\ell}^{-1} = \log 3|Y_{\ell}| \bigl(\log \log 27|Y_{\ell}|\bigr)^{1 + \eta},
\end{equation}
and
\begin{equation*}\label{ub81}
\A_{\ell} = \bigg\{A \in G_1: 8^M \ep_{\ell}^{-1} |Y_{\ell}| \sum_{m=0}^M \frac{\bigl(\log \ep_{\ell}^{-1}|Y_{\ell}|\bigr)^m}{m!}              		        
                             < \sum_{\substack{\bxi \in Y_{\ell}\\\bxi \not= \bzero}} F(A\bxi)\bigg\}.
\end{equation*}
Then using (\ref{intro18}) and (\ref{met1}) we find that
\begin{equation*}\label{ub82}
\sum_{\ell = 1}^{\infty} \mu_1\bigl(\A_{\ell}\bigr) \le \sum_{\ell = 1}^{\infty} \ep_{\ell} < \infty.
\end{equation*}
Thus by the Borel-Cantelli lemma almost all points $A$ in $G_1$ belong to the subset
\begin{equation*}\label{ub82.5}
\zz = \bigcap_{L=1}^{\infty} \bigcup_{\ell = L}^{\infty} \bigl(G_1 \setminus \A_{\ell}\big).
\end{equation*}

Now suppose that $A$ belongs to $\Y\cap \zz$.  Then $A$ belongs to only finitely many of the subsets
$\A_{\ell}$.   That is, there exists a positive integer $L = L(A)$ such that
\begin{equation}\label{ub83} 
 \sum_{\substack{\bxi \in Y_{\ell}\\\bxi \not= \bzero}} F(A\bxi) 
		\le 8^M \ep_{\ell}^{-1} |Y_{\ell}| \sum_{m=0}^M \frac{\bigl(\log \ep_{\ell}^{-1}|Y_{\ell}|\bigr)^m}{m!}
\end{equation}
for $L \le \ell$.  Because $A$ also belongs to $\Y$, the numbers
\begin{equation}\label{ub84}
 \sum_{\substack{\bxi \in Y_{\ell}\\\bxi \not= \bzero}} F(A\bxi),\quad\text{where $\ell = 1, 2, \dots , L(A),$}
\end{equation}
are finite.  From (\ref{ub83}) and (\ref{ub84}) we conclude that
\begin{equation*}\label{ub84.5} 
 \sum_{\substack{\bxi \in Y_{\ell}\\\bxi \not= \bzero}} F(A\bxi) 
		\ll_{A, M} \ep_{\ell}^{-1} |Y_{\ell}| \bigl(\log \ep_{\ell}^{-1}|Y_{\ell}|\bigr)^M
\end{equation*}
for {\it all} positive integers $\ell = 1, 2, \dots $.  Then using (\ref{ub80}) we find that
\begin{equation}\label{ub85} 
 \sum_{\substack{\bxi \in Y_{\ell}\\\bxi \not= \bzero}} F(A\bxi) 
		\ll_{A, \eta, M} |Y_{\ell}| \bigl(\log 3|Y_{\ell}|\bigr)^{M+1} \bigl(\log\log 27|Y_{\ell}|\bigr)^{1 + \eta}
\end{equation}
for all positive integers $\ell$.  Finally, if $X$ is in the collection of subsets $\X$, then $X \subseteq Y_{\ell}$
for some positive integer $\ell$, and therefore
\begin{equation}\label{ub86} 
 \sum_{\substack{\bxi \in X\\\bxi \not= \bzero}} F(A\bxi) 
		\ll_{A, \eta, M} |Y_{\ell}| \bigl(\log 3|Y_{\ell}|\bigr)^{M+1} \bigl(\log\log 27|Y_{\ell}|\bigr)^{1 + \eta}.
\end{equation}
Because $|Y_{\ell}| \le C_2 |X|$ for an absolute constant $C_2$, the bound (\ref{met3}) follows easily from (\ref{ub86}).

\section{Proof of Theorem \ref{thmsba1}}

We assume throughout this section that $A$ is an $M\times N$ matrix in $G_1 = (\R/\Z)^{MN}$, and $A$ is strongly 
badly approximable.  Using the notation introduced in (\ref{ls1}) and (\ref{ls2}), we conclude that
\begin{equation}\label{uba1}
\gamma(A) = \inf\big\{P_M\bigl(A\bxi\bigr) Q_N(\bxi) : \bxi \in \Z^N,\ \text{and}\ \bxi \not= \bzero\big\}
\end{equation}
is positive.  In particular we have
\begin{equation}\label{uba2}
0 < \gamma(A) \le \gamma(A, \K)
\end{equation}
for each subset $\K \subseteq \Z^N$ defined by (\ref{ls3}) and $\gamma(A, \K)$ defined by (\ref{ls7}).

Let
\begin{equation*}\label{uba3}
\D = \{\bd \in \Z^M : 1 \le d_m\ \text{for each}\ m = 1, 2, \dots , M\}.
\end{equation*}
For each point $\bd$ in $\D$ we write
\begin{equation*}\label{uba4}
|\bd| = d_1 + d_2 + \cdots + d_M,
\end{equation*}
and we define
\begin{equation*}\label{uba5}
B(\bd) = \big\{\bx \in (\R/\Z)^M : 2^{-d_m - 1} < \|x_m\| \le 2^{-d_m}\ \text{for each}\ m = 1, 2, \dots , M\}.
\end{equation*}
As each subset $B(\bd)$ contains $2^M$ connected subsets of equal measure, it follows that
\begin{equation}\label{uba6}
\mu_2\bigl(B(\bd)\bigr) = 2^M \prod_{m=1}^M (2^{-d_m} - 2^{-d_m - 1}) = 2^{-|\bd|}.
\end{equation}
If $\bx$ belongs to $B(\bd)$ we find that
\begin{equation}\label{uba7}
2^{|\bd| - M} \le F(\bx) = \prod_{m = 1}^M (2 \|x_m\|)^{-1} <  2^{|\bd|}.
\end{equation}
If $\bd$ and $\be$ are distinct elements of $\D$, then it is clear that $B(\bd)$ and $B(\be)$ are disjoint subsets.
Moreover, we have
\begin{equation*}\label{uba8}
\bigcup_{\bd \in \D} B(\bd) = \big\{\bx \in (\R/\Z)^M : 0 < \|x_m\| \ \text{for each}\ m = 1, 2, \dots , M\big\}.
\end{equation*}
Because the co-ordinates of each point $\bd$ in $\D$ are positive integers, it is clear that $M \le |\bd|$.  And
if $R$ is an integer such that $M \le R$, then
\begin{equation}\label{uba9}
\sum_{\substack{\bd \in \D\\|\bd| \le R}} 1 = \sum_{m = M}^R \sum_{\substack{\bd \in \D\\|\bd| = m}} 1
	= \sum_{m = M}^R \binom{m-1}{m-M} = \binom{R}{M}.
\end{equation}

Let $\bx \mapsto \Phi\bigl(\bx, B(\bd)\bigr)$ denote the characteristic function of the subset $B(\bd)$.  The set
$B(\bd)$ is contained in the union of $2^M$ subsets $\Delta(\bwy)$ of the form (\ref{ls100}).  In particular, we have
\begin{equation}\label{uba10}
B(\bd) \subseteq \bigcup \big\{\bx \in (\R/\Z)^M : \|x_m \pm (3)2^{-d_m-2}\| \le 2^{-d_m - 2}\big\},
\end{equation}
where the union on the right of (\ref{uba10}) is over the set of all $2^M$ choices of $\pm$ signs.  We apply Lemma \ref{lemls3}
to each subset on the right of (\ref{uba10}).  Then the inequality (\ref{ls101}) in the statement of Lemma \ref{lemls3} 
and (\ref{uba2}), imply that
\begin{equation}\label{uba11}
\sum_{\bxi \in \K} \Phi\bigl(A\bxi, B(\bd)\bigr) \le 4^M \gamma(A)^{-1} \mu_2\bigl(B(\bd)\bigr) |\K| + 12^M
\end{equation}
for each point $\bd$ in $\D$.
\begin{equation*}\label{uba12}
\end{equation*}

Now let $R$ be the unique positive integer such that
\begin{equation}\label{uba13}
2^{R - 1} \le 2^{-M} \gamma(A)^{-1} |\K| < 2^R.
\end{equation}
and therefore,
\begin{equation}\label{uba14}
R \le \frac{-\log \gamma(A) + \log |\K|}{\log 2} \ll_A \log 2|\K|.
\end{equation}
From (\ref{uba1}) we conclude that if $\bxi \not= \bzero$ belongs to $\K$, then
\begin{equation*}\label{uba15}
F(A\bxi) \le 2^{-M} \gamma(A)^{-1} Q_N(\bxi) \le 2^{-M} \gamma(A)^{-1} |\K| < 2^R.
\end{equation*}
It follows that for each point $\bxi \not= \bzero$ in $\K$ there exists a unique point $\bd$ in $\D$ such that $|\bd| \le R$ and
$A\bxi$ belongs to $B(\bd)$.  Then using (\ref{uba7}), (\ref{uba11}), we obtain the inequality
\begin{align}\label{uba16}
\begin{split}
\sum_{\substack{\bxi \in \K\\\bxi \not= \bzero}} F(A\bxi) 
	&= \sum_{\substack{\bd \in \D\\|\bd| \le R}} \sum_{\substack{\bxi \in \K\\\bxi \not= \bzero}} \Phi\bigl(A\bxi, B(\bd)\bigr) F(A\bxi)\\
	&\le \sum_{\substack{\bd \in \D\\|\bd| \le R}} 2^{|\bd|} \sum_{\substack{\bxi \in \K\\\bxi \not= \bzero}} \Phi\bigl(A\bxi, B(\bd)\bigr)\\
	&\le 4^M \gamma(A)^{-1} |\K| \sum_{\substack{\bd \in \D\\|\bd| \le R}} 2^{|\bd|} \mu_2\bigl(B(\bd)\bigr)
		+ 12^M \sum_{\substack{\bd \in \D\\|\bd| \le R}} 2^{|\bd|}\\
\end{split}
\end{align}
From (\ref{uba6}), (\ref{uba9}), and (\ref{uba14}), we find that
\begin{equation}\label{uba17}
\sum_{\substack{\bd \in \D\\|\bd| \le R}} 2^{|\bd|} \mu_2\bigl(B(\bd)\bigr) 
		= \sum_{\substack{\bd \in \D\\|\bd| \le R}} 1 = \binom{R}{M} \ll_A \bigl(\log 2|\K|\bigr)^M.
\end{equation}
In a similar manner using (\ref{uba13}), we get
\begin{equation}\label{uba18}
\sum_{\substack{\bd \in \D\\|\bd| \le R}} 2^{|\bd|} \le 2^R \sum_{\substack{\bd \in \D\\|\bd| \le R}} 1\ll_A |\K| \bigl(\log 2|\K|\bigr)^M.
\end{equation}
The inequality (\ref{sba16}) in the statement of Theorem \ref{thmsba1} follows from (\ref{uba16}), (\ref{uba17}), and (\ref{uba18}).

\section{Proof of Theorem \ref{thmsba2}}

Let $A = (\alpha_{mn})$ be an $M\times N$ real matrix with $L = M + N$.  We write $B$ for the $L \times L$ real matrix 
partitioned into blocks as
\begin{equation*}\label{trans10}
B = \begin{pmatrix}  \bone_M & A \\
                                  \bzero      & \bone_N \end{pmatrix},
\end{equation*}
where $\bone_M$ and $\bone_N$ are $M\times M$ and $N\times N$ identity matrices, respectively.  Then we write 
$\Delta = [\delta_{\ell}]$ for an $L \times L$ diagonal matrix with positive diagonal entries $\delta_{\ell}$ such that
\begin{equation}\label{trans11}
\det \Delta = \prod_{\ell = 1}^L \delta_{\ell} = 1.
\end{equation}
Using $B$ and $\Delta$ we define a lattice $\M \subseteq \R^L$ by
\begin{equation}\label{trans12}
\M = \big\{\Delta B \bm : \bm \in \Z^L\big\}.
\end{equation}
And we define the associated convex body $C_L \subseteq \R^L$ by
\begin{equation*}\label{trans13}
C_L = \big\{\bx \in \R^L : |\bx|_{\infty} \le 1\big\},
\end{equation*}
where
\begin{equation*}
|\bx|_{\infty} = \max\big\{|x_1|, |x_2|, \dots , |x_L|\big\}.
\end{equation*}
Let
\begin{equation}\label{trans14}
0 < \lambda_1 \le \lambda_2 \le \cdots \le \lambda_L < \infty
\end{equation}
be the successive minima of the lattice $\M$ with respect to the convex body $C_L$.  We have
\begin{equation}\label{trans15}
\det(\M) = \bigl|\det(\Delta B)\bigr| = 1,\quad\text{and}\quad \Vol_L\bigl(C_L\bigr) = 2^L.
\end{equation}
Therefore it follows from (\ref{trans15}) and Minkowski's inequality (see \cite[Chapter VIII, Theorem V]{cassels1971}) that
\begin{equation}\label{trans16}
(L!)^{-1} \le \lambda_1 \lambda_2 \cdots \lambda_L \le 1.
\end{equation}

The dual (or polar) lattice $\M^* \subseteq \R^L$ is given by
\begin{equation*}\label{trans19}
\M^* = \big\{\Delta^{-1} B^{-T} \bn : \bn \in \Z^L\big\},
\end{equation*}
where
\begin{equation*}\label{trans20}
B^{-T} = \begin{pmatrix}  \bone_M & \bzero \\
                                                  -A^T   & \bone_N \end{pmatrix}.
\end{equation*}
And the dual (or polar) convex body $C_L^*$ is
\begin{equation*}\label{trans21}
C_L^* = \big\{\bx \in \R^L : |\bx|_1 \le 1\big\},
\end{equation*}
where
\begin{equation*}
|\bx|_1 = |x_1| + |x_2| + \cdots + |x_L|.
\end{equation*}
Let
\begin{equation}\label{trans28}
0 < \lambda_1^* \le \lambda_2^* \le \cdots \le \lambda_L^* < \infty
\end{equation}
be the successive minima associated to the lattice $\M^*$ and the convex body $C_L^*$.  In this case we find that
\begin{equation*}\label{trans29}
\det(\M^*) = \bigl|\det(\Delta^{-1} B^{-T})\bigr| = 1,\quad\text{and}\quad \Vol_L\bigl(C_L^*\bigr) = \frac{2^L}{L!}.
\end{equation*}
Thus Minkowski's inequality for the dual successive minima is
\begin{equation}\label{trans30}
1 \le \lambda_1^* \lambda_2^* \cdots \lambda_L^* \le L!.
\end{equation}

The two sets of successive minima (\ref{trans14}) and (\ref{trans28}) are linked by an inequality of 
Mahler \cite{mahler1939} (see also \cite[Chapter VIII, Theorem VI]{cassels1971}), which asserts that
\begin{equation*}\label{trans41}
1 \le \lambda_{\ell} \lambda_{L - \ell +1}^* \le L!
\end{equation*}
for each integer $\ell = 1, 2, \dots , L$.  In particular, if $\ell = 1$ we have
\begin{equation}\label{trans42}
1 \le \lambda_1 \lambda_L^*.
\end{equation}
Then using (\ref{trans28}), (\ref{trans30}), and (\ref{trans42}), we get
\begin{equation}\label{trans43}
\frac{\bigl(\lambda_1^*\bigr)^{L-1}}{L!} \le \frac{\lambda_1^* \lambda_2^* \cdots \lambda_{L-1}^*}{L!}
		\le \frac{\lambda_1\bigl(\lambda_1^* \lambda_2^* \cdots \lambda_L^*\bigr)}{L!} \le \lambda_1.
\end{equation}

Now assume that $A^T$ is strongly badly approximable.  We have already observed that each submatrix $A(I, J)^T$ is strongly 
badly approximable.  In particular, each column  of $A^T$ is strongly badly approximable, and therefore each column 
of $A^T$ is badly approximable.  By the basic transference principle \cite[Chapter V, Corollary to Theorem II]{Cassels1965},
each row of the matrix $A$ is badly approximable.  Then (\ref{sba7}) implies that
\begin{equation}\label{trans44}
0 < \big\|\sum_{n=1}^N \alpha_{mn} \xi_n \Big\|
\end{equation}
for each point $\bxi \not= \bzero$ in $\Z^N$.

Because $A^T$ is strongly badly approximable, there exists a positive constant 
$\gamma\bigl(A^T\bigr)$ such that for each vector $\bu \not= \bzero$ in $\Z^M$, we have
\begin{equation}\label{sba24}
\gamma\bigl(A^T\bigr) \le \biggl(\prod_{m=1}^M \bigl(|u_m| + 1\bigr)\biggr)
		\biggl(\prod_{n=1}^N \Big\|\sum_{m=1}^M u_m \alpha_{mn}\Big\|\biggr).
\end{equation}
We will show that there exists a positive constant $\gamma(A)$ such that for each vector $\bxi \not= \bzero$ in $\Z^N$, 
we have
\begin{equation}\label{sba25}
\gamma(A) \le \biggl(\prod_{m=1}^M \Big\|\sum_{n=1}^N \alpha_{mn} \xi_n\Big\|\biggr) 
                    \biggl(\prod_{n=1}^N\bigl(|\xi_n| + 1\bigr)\biggr).
\end{equation}
Our proof of (\ref{sba25}) will be by induction on the positive integer $M$.

Let $\bxi \not= \bzero$ be a point in $\Z^N$ and let $\ceta$ be a point in $\Z^M$ such that
\begin{equation}\label{trans46}
\biggl|\eta_m + \sum_{n=1}^N \alpha_{mn} \xi_n\biggr| = \Big\|\sum_{n=1}^N \alpha_{mn} \xi_n\Big\|
\end{equation}
for each $m = 1, 2, \dots , M$.  It will be convenient to define the vector $\bpsi$ in $\Z^L$ by
\begin{equation*}\label{trans47}
\bpsi = \begin{pmatrix} \ceta \\
                                          \bxi\end{pmatrix},
\end{equation*}
where $\ceta$ belongs to $\Z^M$ and $\bxi \not= \bzero$ belongs to $\Z^N$.  In view of (\ref{trans44}), we define $R$ to 
be the unique positive real number such that
\begin{equation}\label{trans48}
R^L =  \biggl(\prod_{m=1}^M \Big\|\sum_{n=1}^N \alpha_{mn} \xi_n\Big\|\biggr)\biggl(\prod_{n=1}^N \bigl(|\xi_n| + 1\bigr)\biggr).
\end{equation}
Next we select the $L\times L$ diagonal matrix $\Delta = [\delta_{\ell}]$ so that
\begin{equation}\label{trans49}
R = \delta_m \Big\|\sum_{n=1}^N \alpha_{mn} \xi_n\Big\|\quad \text{if $1 \le m \le M$,}
\end{equation}
and
\begin{equation}\label{trans50}
R = \delta_{M+n} \bigl(|\xi_n| + 1\bigr) \quad \text{if $1 \le n \le N$.}
\end{equation}
It follows from (\ref{trans48}) that $\Delta$ satisfies the condition (\ref{trans11}).  
As $\Delta B \bpsi$ is a nonzero point in the lattice $\M$ defined by (\ref{trans12}), we have
\begin{equation}\label{trans51}
\lambda_1 \le |\Delta B \bpsi|_{\infty}.
\end{equation}
Then using (\ref{trans49}) and (\ref{trans50}), we find that (\ref{trans51}) can be written as
\begin{equation}\label{trans52}
\lambda_1 \le |\Delta B \bpsi|_{\infty} = R.
\end{equation}

It is well known (see \cite[Chapter VIII, Lemma 1]{cassels1971}) that there exists a point  $\bw \not= \bzero$ in $\Z^L$ such that
\begin{equation}\label{trans71}
\bigl|\Delta^{-1} B^{-T} \bw\bigr|_1 = \lambda_1^*.
\end{equation}
Again it will be convenient to partition the column vector $\bw$ as
\begin{equation*}\label{trans74}
\bw = \begin{pmatrix} \bu\\
                                       \bv\end{pmatrix},
\end{equation*}
where $\bu$ is a point in $\Z^M$ and $\bv$ is a point in $\Z^N$.  Then (\ref{trans71}) can be written as
\begin{align}\label{trans78}
\begin{split}
R^{-1} \sum_{m=1}^M |u_m| \biggl|\eta_m + &\sum_{n=1}^N \alpha_{mn} \xi_n\biggr|\\
	&+  R^{-1} \sum_{n=1}^N \bigl(|\xi_n| + 1\bigr) \biggl|v_n - \sum_{m=1}^M u_m \alpha_{mn}\biggr| = \lambda_1^*
\end{split}	
\end{align}
If $\bu = \bzero$ then $\bv \not= \bzero$, and (\ref{trans78}) leads to the inequality
\begin{equation}\label{trans82}
R^{-1} \le R^{-1} \sum_{n=1}^N\bigl(|\xi_n| + 1\bigr)|v_n| = \lambda_1^*.
\end{equation}
Then (\ref{trans43}), (\ref{trans52}), and (\ref{trans82}), imply that
\begin{equation}\label{trans83}
\frac{1}{L!} \le R^L.
\end{equation}
Clearly (\ref{trans43}), (\ref{trans52}), and (\ref{trans83}), show that $R^L$ is bounded from below by a positive constant
that depends on $L$, but not on the point $\bxi \not= \bzero$ in $\Z^N$.

For the remainder of the proof we assume that $\bu \not= \bzero$.  In this case it is clear from 
the definition of $\lambda_1^*$ that we must have
\begin{equation}\label{trans85}
\biggl|v_n - \sum_{m=1}^M u_m \alpha_{mn}\biggr| = \Big\|\sum_{m=1}^M u_m \alpha_{mn}\Big\|
\end{equation}
for each integer $n = 1, 2, \dots, N$.  Therefore (\ref{trans46}) and (\ref{trans85}) imply that (\ref{trans78}) can be written as
\begin{equation}\label{trans86}
\sum_{m=1}^M \delta_m^{-1} |u_m| 
	                          +  \sum_{n=1}^N \delta_{M+n}^{-1}\Big\|\sum_{m=1}^M u_m \alpha_{mn}\Big\| = \lambda_1^*.
\end{equation}
Then using (\ref{trans11}) and (\ref{sba24}) we have
\begin{align}\label{trans90}
\begin{split}
\gamma\bigl(A^T\bigr) &\le \biggl(\prod_{m=1}^M \bigl(|u_m| + 1\bigr)\biggr)
		\biggl(\prod_{n=1}^N \Big\|\sum_{m=1}^M u_m \alpha_{mn}\Big\|\biggr)\\
         &= \biggl(\prod_{m=1}^M \delta_m^{-1} \bigl(|u_m| + 1\bigr)\biggr)
  				\biggl(\prod_{n=1}^N \delta_{M+n}^{-1} \Big\|\sum_{m=1}^M u_m\alpha_{mn}\Big\| \biggr).
\end{split}
\end{align}

We now argue by induction on $M$.  If $M = 1$ then $u_1 \not= 0$.  We use the identity (\ref{trans86}), and we
apply the arithmetic-geometric mean inequality to the right hand side of (\ref{trans90}).  In this way we obtain the inequality				
\begin{align}\label{trans91}
\begin{split}				
L \gamma\bigl(A^T\bigr)^{\frac{1}{L}} &\le \delta_1^{-1} \bigl(|u_1| + 1\bigr) 
							  + \sum_{n=1}^N \delta_{M+n}^{-1} \Big\|\sum_{m=1}^M u_m \alpha_{mn}\Big\|\\
		&\le 2 \delta_1^{-1} |u_1| + 2 \sum_{n=1}^N \delta_{M+n}^{-1} \Big\|\sum_{m=1}^M u_m \alpha_{mn}\Big\|\\
		&= 2 \lambda_1^*.
\end{split}
\end{align}	
Then (\ref{trans43}), (\ref{trans52}), and (\ref{trans91}), imply that $R^L$ is bounded from below by a positive constant that
does not depend on the point $\bxi \not= \bzero$ in $\Z^N$.		

Finally, we assume that $2 \le M$.  We have already remarked that each submatrix $A(I, J)^T$ is strongly badly approximable,
and in particular the submatrix of $A^T$ obtained by removing the column indexed by $k$, where $1 \le k \le M$, is
strongly badly approximable.  Hence by the inductive hypothesis the submatrix of $A$ obtained by removing the row
indexed by $k$ is strongly badly approximable.  Thus there exists a positive constant $\gamma_k = \gamma_k(A)$ such
that the inequality
\begin{equation}\label{trans94}
0 < \gamma_k(A) \le \biggl(\prod_{\substack{m=1\\m \not= k}}^M\Big\|\sum_{n=1}^N \alpha_{mn} \xi_n\Big\|\biggr)
		\biggl(\prod_{n=1}^N \bigl(|\xi_n| + 1\bigr)\biggr)
\end{equation}
holds for all $\bxi \not= \bzero$ in $\Z^N$.  Using (\ref{trans94}) we obtain the inequality
\begin{align}\label{trans96}
\begin{split}
\gamma_k(A) \Big\|\sum_{n=1}^N \alpha_{kn} \xi_n \Big\| \le \biggl(\prod_{m=1}^M \Big\|\sum_{n=1}^N \alpha_{mn} \xi_n\Big\|\biggr)
	\biggl(\prod_{n=1}^N \bigl(|\xi_n| + 1\bigr)\biggr) = R^L
\end{split}
\end{align}
for each integer $k = 1, 2, \dots , M$.  Again we use the identity (\ref{trans86}), and we apply the arithmetic-geometric mean 
inequality to the right hand side of (\ref{trans90}).  This leads to the estimate
\begin{align}\label{trans97}
\begin{split}				
L \gamma\bigl(A^T\bigr)^{\frac{1}{L}} &\le \sum_{m=1}^M \delta_m^{-1} \bigl(|u_m| + 1\bigr) 
							  + \sum_{n=1}^N \delta_{M+n}^{-1} \Big\|\sum_{m=1}^M u_m \alpha_{mn}\Big\|\\				
	&= \lambda_1^* +  \sum_{m=1}^M \delta_m^{-1}\\
	&= \lambda_1^* + R^{-1} \sum_{m=1}^M \Big\|\sum_{n=1}^N \alpha_{mn} \xi_n \Big\|.
\end{split}
\end{align}
We apply the inequalities (\ref{trans43}), (\ref{trans52}), and (\ref{trans96}) to the right hand side of
(\ref{trans97}).  In this way we arrive at the bound
\begin{align}\label{trans93}
\begin{split}
L \gamma\bigl(A^T\bigr)^{\frac{1}{L}} &\le \lambda_1^* + R^{L-1} \sum_{m=1}^M \gamma_m(A)^{-1}\\
	&\le \bigl(L!\bigr)^{\frac{1}{L-1}} R^{\frac{1}{L-1}} + R^{L-1} \sum_{m=1}^M \gamma_m(A)^{-1}.
\end{split}
\end{align} 
The inequality (\ref{trans93}) shows that $R^L$ is bounded from below by a positive constant that does not 
depend on the point $\bxi \not= \bzero$ in $\Z^N$.  This verifies (\ref{sba25}), and completes the proof of Theorem \ref{thmsba2}.

\end{document}